\newtheorem{theorem}{Theorem}[subsection]
\newtheorem*{theorem*}{Theorem}
\newtheorem{lemma}[theorem]{Lemma}
\newtheorem*{lemma*}{Lemma}
\newtheorem{prop}[theorem]{Proposition}
\newtheorem{corollary}[theorem]{Corollary}
\newtheorem{definition}[theorem]{Definition}
\newtheorem{notation}[theorem]{Notation}
\newtheorem{claim}[theorem]{Claim}
\newtheorem*{claim*}{Claim}
\newtheorem{remark}[theorem]{Remark}
\newtheorem{assumption}[theorem]{Assumption}
\crefname{claim}{claim}{claims}
\crefname{prop}{proposition}{propositions}
\newcommand{\ZZ}{\mathbb{Z}}
\newcommand{\CC}{\mathbb{C}}
\newcommand{\RR}{\mathbb{R}}
\newcommand{\NN}{\mathbb{N}}
\newcommand{\into}{\hookrightarrow}
\newcommand{\tensor}{\otimes}
\newcommand{\End}{\text{End}}
\newcommand{\mathcolorbox}[2]{\colorbox{#1}{$\displaystyle #2$}}
\DeclareMathOperator{\Crit}{Crit}
\DeclareMathOperator{\ind}{ind}
\DeclareMathOperator{\id}{id}
\DeclareMathOperator{\res}{res}
\DeclareMathOperator{\width}{width}
\DeclareMathOperator{\grad}{grad}
\DeclareMathOperator{\ord}{ord}
\DeclareMathOperator{\cyl}{cyl}
\newcommand{\val}{\nu}
\newcommand{\fil}{\ell}
\newcommand{\ver}{V}
\DeclareMathOperator{\syz}{val}
\DeclareMathOperator{\Flux}{Flux}
\newcommand{\ot}{\leftarrow}
\renewcommand{\Im}{\text{Im}}
\renewcommand{\Re}{\text{Re}}
\DeclareMathOperator{\Fuk}{Fuk}
\DeclareMathOperator{\Coh}{Coh}
\newcommand{\CF}{{CF^\bullet}}
\newcommand{\CM}{{CM^\bullet}}
\newcommand{\QH}{{QH_\bullet}}
\newcommand{\QC}{{QC_\bullet}}
\renewcommand{\jmath}{\iota} 
\begin{document}
\newcommand{\Addresses}{{
  \bigskip
  \footnotesize

  \noindent J.~Hicks, \textsc{School of Mathematics,  University of Edinburgh}\par\nopagebreak
  \noindent \textit{E-mail address}: \texttt{jeff.hicks@ed.ac.uk}

  \medskip

}}

\title{\normalsize \textbf{Wall-Crossing from Lagrangian Cobordisms}}
\author{\normalsize  Jeff Hicks}
\date{}

\tikzset{every picture/.style=thick}
\maketitle{}
\begin{abstract}    Biran and Cornea showed that monotone Lagrangian cobordisms give an equivalence of objects in the Fukaya category.
	However, there are currently no known non-trivial examples of monotone Lagrangian cobordisms with two ends. 
	We look at an extension of their theory to the pearly model of Lagrangian Floer cohomology and unobstructed Lagrangian cobordisms. 
	In particular, we examine the suspension cobordism of a Hamiltonian isotopy and the Haug mutation cobordism between mutant Lagrangian surfaces.
	In both cases, we show that these Lagrangian cobordisms can be unobstructed by bounding cochain and additionally induce an $A_\infty$ homomorphism between the Floer cohomology of the ends.
	This gives a first example of a two-ended Lagrangian cobordism giving a non-trivial equivalence of Lagrangian Floer cohomology. 
	
	A brief computation is also included which shows that the incorporation of bounding cochain from this equivalence accounts for the ``instanton-corrections'' considered by \cite{auroux2007mirror,pascaleff2017wall,rizell2018refined} for the wall-crossing formula between Chekanov and product tori in $(\CC^2)\setminus \{z_1z_2=1\}$. 
	
	We additionally prove some auxiliary results that may be of independent interest. These include
	a weakly filtered version of the Whitehead theorem for $A_\infty$ algebras and 
	an extension of Charest-Woodward's stabilizing divisor model of Lagrangian Floer cohomology to Lagrangian cobordisms. \end{abstract}
\section{Introduction}
	\label{sec:introduction}
	\subsection*{Wall-Crossing...}
The wall-crossing phenomenon for Lagrangian submanifolds is an observation that the count of holomorphic disks with boundary on a family of Lagrangian submanifolds need not be continuous over the family.
The count of these disks and how this count changes over families play an important role in describing the space of Lagrangian submanifolds up to Hamiltonian isotopy.

The moduli space of Lagrangian branes can be understood locally by constructing coordinate charts. 
A Lagrangian brane is a Lagrangian submanifold equipped with a unitary local system.
Nearby any Lagrangian, the \emph{flux of a Lagrangian isotopy} builds a local $(\CC^*)^k$ chart, where $k=\dim(H^1(L; \RR))$. 
An expectation which has been proven in good examples (such as \cite{auroux2007mirror,palmer2019invariance}) is that the \emph{open Gromov-Witten (OGW) potential}, which records an area-weighted count of the holomorphic disks with boundary on a given Lagrangian $L$, is a holomorphic map in these coordinates.
Both the symplectic area of these disks and the flux of an isotopy are complexified by the unitary local system on the Lagrangian submanifold.

Locally, the flux-charts and the OGW potential are holomorphic. 
However, neither of these can consistently provide coordinates or functions globally.
An inconsistency occurs when a holomorphic disk ``bubbles'' over a Lagrangian isotopy, causing a discontinuity in the OGW potential.
To consistently construct coordinates on the moduli space of Lagrangian submanifolds, one must incorporate ``instanton corrections'' to the count of holomorphic disks and flux computation. 
In \cite{kontsevich2001homological,auroux2007mirror} these corrections were phrased in terms of a wall-crossing formula. 
In this paradigm, the moduli space of Lagrangian submanifolds is divided into chambers of Lagrangians which do not bound Maslov index 0 disks.
Separating these chambers are ``walls'' consisting of the Lagrangian submanifolds which bound Maslov index 0 disks.
To transition from coordinates on one chamber to another, one computes a wall-crossing formula given by the count of Maslov index 0 disks, which appropriately modifies the OGW potential and flux computation.

A particular example of wall-crossing occurs when a monotone Lagrangian torus $L$ bounds a Lagrangian disk $D$. 
For such pairs there exists another Lagrangian, called the mutation $\mu_D(L)$, lying in a different chamber. 
\cite{pascaleff2017wall,palmer2019invariance,rizell2018refined} explicitly compute the wall-crossing formula between these two chambers. 
Notably, this computation gives a wall-crossing formula between two Lagrangians which are \emph{not} Hamiltonian isotopic. 

The geometric justification for these wall-crossing transformations comes from the homological mirror symmetry conjecture of \cite{kontsevich1994homological}. 
Associated to a symplectic space $X$ is the Fukaya category $\Fuk(X)$ whose objects are Lagrangian submanifolds. 
The conjecture predicts that to a Calabi-Yau manifold $X$, there exists a mirror Calabi-Yau manifold $\check X$ so that the symplectic geometry of $X$ as recorded by $\Fuk(X)$ is interchanged with the complex geometry of $\check X$ as recorded by $D^b\Coh(\check X)$. 
One way to recover the space $\check X$ is to study the moduli of points on $\check X$.
A useful perspective comes from the SYZ conjecture \cite{strominger1996mirror}, which presents mirror spaces as dual Lagrangian torus fibrations. 
From this viewpoint, a candidate mirror to the skyscraper sheaf of a point in $\check X$ is a Lagrangian torus fiber of the SYZ fibration $X\to Q$.

As Hamiltonian isotopic Lagrangian submanifolds give quasi-isomorphic objects of the Fukaya category, the moduli of objects in the Fukaya category is a good proxy for the Hamiltonian isotopy classes of Lagrangian submanifolds.  
This Fukaya-categorical interpretation has the advantage that the incorporation of ``instanton corrections'' naturally arises in the construction of the Fukaya category. 
The presence of holomorphic disks with boundary on a Lagrangian $L$ complicates the construction of the Fukaya category considerably.
To account for the presence of holomorphic disks, one must equip each Lagrangian submanifold with additional data. 
The objects of the Fukaya category are pairs $(L, b)$, where $b$ is a ``bounding cochain''  deforming $L$ as an object of the category.
This deformation encodes an algebraic cancellation of holomorphic disks with boundary on $L$.
Hamiltonian isotopy still produces an equivalence of such pairs, where disk bubbling is recorded by modifications of the bounding cochain.
In the examples of crossing a wall, Lagrangians $(L, 0)$ in one chamber are equivalent to $(L', b')$ in the second chamber. 
The perspective of \cite{fukaya2010lagrangian} is that the non-trivial deformation $b'$ should be considered as the correction which occurs in the wall-crossing formula.

\subsection*{... and Lagrangian Cobordisms}
In the previous discussion, we focused on Hamiltonian isotopy as the geometric equivalence relation on objects of the Fukaya category. 
Another such equivalence relation was exhibited by  \cite{biran2013fukayacategories}, who proved that Lagrangian submanifolds related by monotone Lagrangian cobordism are equivalent objects in the Fukaya category.
As every Hamiltonian isotopy gives an example of a Lagrangian cobordism, this indeed generalizes the previously considered equivalences.
However, the monotonicity condition precludes the existence of Maslov-index 0 disks, so examples considered by \cite{biran2013fukayacategories} do not realize the wall-crossing phenomenon.

A natural extension is to consider Lagrangian cobordisms equipped with bounding cochains.
Such a cobordism $(K, b)$ is predicted to yield equivalences in the Fukaya category between the ends $(L^+, b|_{L^+})$ and $(L^-, b|_{L^-})$. 
One interesting example of a Lagrangian cobordism is the \emph{mutation cobordism} constructed in \cite{haug2015lagrangian}, which relates mutant Lagrangians.
This is an example of a non-monotone Lagrangian cobordism, as mutant Lagrangians $L, \mu_D(L)$ are generally non-isomorphic as objects of the Fukaya category. 	\subsection{Results}
	The goal of this paper is to extend our understanding of wall-crossing between Hamiltonian isotopic Lagrangians to wall-crossing between cobordant Lagrangians.
We use the \emph{pearly Floer complex} $\CF(L)$ as a receptacle for counting holomorphic disks.
This is a deformation of the Morse complex $\CM(L)$ constructed by inserting holomorphic disks with boundary on $L$ into the flow lines of the Morse function.
$\CF(L)$ is a filtered $A_\infty$ algebra.
We first prove that the pearly Floer complex of a Lagrangian cobordism is compatible with the pearly Floer complex of the ends.
\begin{theorem}[Restatement of \cref{cor:projectionsareainfinity}]
    The projections of the Floer cohomology of a Lagrangian cobordism to that of its ends $\beta^\pm: \CF(K)\to \CF(L^\pm)$ are $A_\infty$ homomorphisms.
\end{theorem}
We employ these new Floer results techniques to extend  \cite{biran2013fukayacategories} to the non-monotone and cylindrical setting.
\begin{theorem}[Paraphrasing \cref{thm:cylindricityofcobordism}]
    Suppose that $K: L^-\rightsquigarrow L^+$ is a Lagrangian cobordism diffeomorphic to $L^+\times \RR$. 
    Then there is a homotopy equivalence of filtered $A_\infty$ algebras $\Theta_K: \CF(L^-)\to \CF(L^+)$. 
\end{theorem}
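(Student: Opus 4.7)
The plan is to build $\Theta_K$ directly from the pearly model on the cobordism $K$ itself, using the cylindrical topology to identify Morse and almost complex data at the two ends. Fix a Morse function $f_-$ and admissible almost complex structure $J_-$ defining $\CF(L^-)$. Using the hypothesis $K \cong L^-\times \RR$, extend $f_-$ to $K$ as the pullback along the projection $K\to L^-$, then add a small Morse function $h$ on the $\RR$ factor whose two critical points single out copies of $L^-$ and $L^+$ sitting inside $K$; extend the almost complex structure to be cylindrical at both ends. The induced data at the positive end provides Morse and complex structure data for $L^+$, defining $\CF(L^+)$. The Taylor components $\Theta_K^k:\CF(L^-)^{\otimes k}\to\CF(L^+)$ are then defined by counting rigid pearly trees inside $K$ with $k$ ordered inputs at critical points lying in the $L^-$ slice and a single output at a critical point lying in the $L^+$ slice, weighted by the Novikov variable recording the total symplectic area of the holomorphic disk vertices.

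I would verify the filtered $A_\infty$ morphism equations by inspecting the codimension-one boundary of the one-dimensional components of these cobordism moduli spaces. The boundary strata decompose into three geometric types: Morse breakings of internal edges, which pair off and cancel; breakings concentrated near the negative end, yielding compositions of the form $\Theta_K \circ \mu_{L^-}$; and breakings near the positive end, yielding $\mu_{L^+}\circ \Theta_K$. Residual disk-bubble contributions in the interior of $K$ match the curvature terms $m_0$ on either side, giving the full filtered $A_\infty$ morphism equation. The cylindrical behavior at infinity is used twice: to guarantee a uniform a priori energy bound via a maximum-principle argument in the $\RR$-direction, and to ensure that pearly trajectories cannot escape through the ends, so that the moduli spaces admit a Gromov-style compactification on which the boundary analysis is valid.

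The main technical obstacle is controlling the non-monotone behavior of the cobordism moduli spaces. Without monotonicity, disk and sphere bubbles of non-positive Maslov index are not excluded, and one must argue that their contributions still respect the Novikov filtration so that $\Theta_K$ is well-defined as a filtered morphism. The other delicate point is achieving transversality for pearly trees on $K$ while keeping the almost complex structure cylindrical at both ends: the cylindrical constraint restricts the available perturbations, and one must show, in the spirit of \cite{biran2013fukayacategories}, that enough perturbative freedom remains in the interior of $K$ to make every relevant moduli space cut out transversally. Once these two issues are settled, the construction and verification of $\Theta_K$ follow the standard pearly-complex continuation template, with the cylindrical hypothesis playing the role that the product structure plays in Biran--Cornea's proof that Hamiltonian isotopies induce quasi-isomorphisms.
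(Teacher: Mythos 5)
Your proposal takes a genuinely different route from the paper. You define the Taylor components $\Theta_K^k$ directly, by counting pearly trees that traverse the cobordism from the $L^-$ slice to the $L^+$ slice, and then verify the $A_\infty$ morphism equations by analyzing boundaries of one-dimensional moduli spaces --- essentially the Biran--Cornea continuation template adapted to the non-monotone setting. The paper instead never introduces any ``traversing'' moduli spaces: it only uses the pearly complex $\CF(K,h)$ of the cobordism itself (whose existence is taken as given, together with \cref{assum:regularization}), chooses a Morse profile with bottlenecks so that $\CF(K,h)\cong \CF(L^-)\oplus\CF(L^+)[1]\oplus\CF(L^+)$, observes that the component $m^+_0$ of the differential is the identity plus higher-valuation terms and hence invertible, and concludes that $\CF(K,h)$ is a filtered mapping cocylinder in the sense of \cref{def:mappingcylinder}. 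The morphism $\Theta_K=\beta^+\circ\alpha$ is then extracted purely algebraically from the curved homotopy transfer theorem via \cref{thm:cylfrommap}. What the paper's route buys is that all the hard analysis is confined to the construction of $\CF(K)$ itself, and the identical algebraic machinery later handles the non-cylindrical mutation cobordism after inserting a deforming cochain. What your route would buy is a more transparent geometric formula for $\Theta_K^k$, at the cost of setting up and regularizing an entirely new family of moduli spaces.

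The substantive gap in your version is the treatment of interior Maslov index $0$ disks. You assert that ``residual disk-bubble contributions in the interior of $K$ match the curvature terms $m^0$ on either side,'' but in the non-monotone setting a disk bubbling off in the interior of the cobordism need not be pushable to either end, and such boundary strata of your one-dimensional moduli spaces are not of the form $\Theta_K\circ\mu_{L^-}$ or $\mu_{L^+}\circ(\Theta_K^{\otimes\bullet})$. To close the morphism equation you would need a nonzero component $\Theta_K^0$ counting output-only configurations, together with a bookkeeping argument showing that the interior-bubble strata are exactly accounted for by terms involving $\Theta_K^0$ and the curvatures $m^0_{L^\pm}$; this is precisely the cancellation that the extra terms in \cref{eq:homotopyatcrit} of the curved homotopy transfer theorem encode, and it does not come for free from a maximum principle. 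Relatedly, your right-end boundary strata should produce $\mu_{L^+}$ applied to several copies of $\Theta_K$, not a single composition $\mu_{L^+}\circ\Theta_K$; as written the claimed boundary decomposition would only yield a chain map, not a filtered $A_\infty$ morphism.
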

As every Hamiltonian isotopy of Lagrangians produces a cylindrical suspension Lagrangian cobordism realizing that isotopy, this generalizes invariance of Floer cohomology of  Hamiltonian isotopic Lagrangians to the unobstructed setting.
The method of proof differs from the Hamiltonian isotopy setting and uses that $\CF(K)$ is the deformation of the mapping cylinder $\CM(K)$.
In the setting where $L^-$ is tautologically unobstructed, we show that $K$ is unobstructed by a bounding cochain whose restriction to $L^-$ is trivial but whose restriction to $L^+$ may not be.

The first non-monotone and non-cylindrical example that we look at is the mutation cobordism. 
We similarly prove that this cobordism gives an equivalence. 
\begin{theorem}[Paraphrasing \cref{thm:wallcrossingcobordism}]
    Let $K_{\mu^\epsilon_D}: L\rightsquigarrow \mu^\epsilon_D(L)$ be a mutation cobordism satisfying \cref{cond:isolatedmutation}.
    Then there exists a deforming cochain $b\in \CF(K_{\mu^\epsilon_D})$ so that the $\CF_b(K_{\mu^\epsilon_D})$ is an $A_\infty$ mapping cocylinder, yielding a map of filtered $A_\infty$ algebras:
    \[\Theta_{\mu^\epsilon_D}: \CF_{b^-}(\mu_D^\epsilon( L))\to \CF_{b^+}(L).\]
\end{theorem}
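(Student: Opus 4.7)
The strategy is to reduce the non-cylindrical, non-monotone case to the cylindrical case handled by \cref{thm:cylindricityofcobordism} by algebraically cancelling the obstruction coming from the Maslov-index zero disks bounded by $K_{\mu_D^\epsilon}$. The cobordism is non-monotone and not topologically cylindrical, so we cannot invoke the previous theorem directly. Instead, I will construct a bounding cochain $b \in \CF(K_{\mu_D^\epsilon})$ that trivializes the curvature obstruction, and then argue that the deformed complex $\CF_b(K_{\mu_D^\epsilon})$ fits into the paper's mapping cocylinder construction for filtered $A_\infty$ algebras.

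The first step is to analyze the holomorphic disks on $K_{\mu_D^\epsilon}$ under \cref{cond:isolatedmutation}. The isolation hypothesis should localize the problematic Maslov-index zero disks near the antisurgery disk $D$, so that the curvature term $m_0 \in \CF(K_{\mu_D^\epsilon})$ and the low-energy pieces of the $A_\infty$ structure admit a combinatorial description in terms of the mutation disk. The second step is to solve the Maurer-Cartan equation iteratively along the Novikov filtration: having chosen $b_{k-1}$ solving the equation modulo energy $k$, the obstruction to extending to $b_k$ lies in the cohomology of $\CF(K_{\mu_D^\epsilon})$, and its vanishing should follow from the localized structure of the relevant moduli spaces. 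This yields $b$ as a formal sum in Novikov parameter, with explicit leading term dictated by a Morse model of the mutation disk $D$.

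The third step is to restrict $b$ to the cylindrical ends of $K_{\mu_D^\epsilon}$, producing bounding cochains $b^- \in \CF(L)$ and $b^+ \in \CF(\mu_D^\epsilon(L))$. The deformed complex $\CF_b(K_{\mu_D^\epsilon})$ then carries natural $A_\infty$-morphisms to $\CF_{b^-}(L)$ and $\CF_{b^+}(\mu_D^\epsilon(L))$ given by restriction to the ends, together with the Morse-theoretic projection from the cobordism. I would verify that these satisfy the universal property of a mapping cocylinder as developed in the paper's filtered $A_\infty$-algebra constructions, which, together with a quasi-isomorphism from one of the projections, produces the desired $\Theta_{\mu_D^\epsilon}: \CF_{b^-}(L) \to \CF_{b^+}(\mu_D^\epsilon(L))$.

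The main obstacle will be the construction of $b$ with controlled behavior at the ends: the Maurer-Cartan solution has to be global on the non-compact cobordism, and the restrictions $b^\pm$ to the ends must themselves be valid bounding cochains realizing the wall-crossing correction. This requires showing that no disks on $K_{\mu_D^\epsilon}$ can ``escape'' to the ends in a way that disrupts the Maurer-Cartan equation there, which is precisely what \cref{cond:isolatedmutation} is designed to ensure. A secondary obstacle is verifying that $\CF_b(K_{\mu_D^\epsilon})$ genuinely realizes the mapping cocylinder structure (rather than merely being quasi-isomorphic to one), since the clean extraction of $\Theta_{\mu_D^\epsilon}$ as a strict map of filtered $A_\infty$ algebras hinges on this strict rather than homotopical identification.
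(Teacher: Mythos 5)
Your proposal misses the central difficulty of the paper's argument and substitutes a step that does not work as stated. The reason $\CF(K_{\mu_D^\epsilon})$ fails to be a mapping cocylinder is not primarily the curvature $m^0$: it is that the cobordism is not a topological cylinder, so at zero Novikov valuation the component $(m^+_0)_{=0}$ of the differential from $\CF(L^+)$ into $\CF(L^0)\oplus E^0$ is \emph{not invertible} --- in the paper's basis it misses the generator $x^-$ of $E^0$ coming from the extra handle. Invertibility of $m^+_0$ is the defining condition of \cref{def:mappingcylinder}, and it is restored only by an explicit Maslov-index-$0$ holomorphic disk $u_{ex}$ (\cref{prop:exceptionaldisk}) whose boundary runs over both handles: its contribution to $m^1$ supplies the missing matrix entry $\langle m^1(c^{u+}),x^-\rangle = T^\epsilon + O(\epsilon)$. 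Your plan to ``verify the universal property of a mapping cocylinder'' after restricting a bounding cochain to the ends never confronts this invertibility question, which is exactly where the geometry of the mutation (and \cref{cond:isolatedmutation}, which guarantees $u_{ex}$ is the lowest-energy disk hitting $x^\pm$) enters.

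Your second step is also off target. You propose to solve the full Maurer--Cartan equation on $K_{\mu_D^\epsilon}$ by induction on energy, asserting that the obstructions vanish by ``the localized structure of the moduli spaces''; there is no reason for these cohomological obstructions to vanish, and the theorem does not require a Maurer--Cartan solution. The paper constructs only a \emph{deforming} cochain $d_\epsilon \in E^-\oplus E^+$ chosen so that $\pi_{E^0}\circ m^1(d_\epsilon)$ cancels the leading term $T^\epsilon(x^-+x^+)$ of $\pi_{E^0}\circ m^0$ contributed by $u_{ex}$; this raises the valuation of $\pi_{E^0}\circ m^0_{d_\epsilon}$ above $\epsilon$, which combined with $\ord(((m^+_0)_{d_\epsilon})^{-1})\geq -\epsilon$ gives the positivity condition $\val((m^+_0)^{-1}\circ m^0)>0$ required by \cref{def:mappingcylinder}. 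Only then does \cref{thm:cylfrommap} produce $\Theta_{\mu_D^\epsilon}$. To repair your argument you would need to (i) exhibit the exceptional disk and its effect on the $m^1$-matrix, (ii) replace the inductive Maurer--Cartan step by the single algebraic cancellation on $E^0$, and (iii) carry out the valuation estimates showing $(m^+_0)_{d_\epsilon}|_{E^+}$ is invertible despite the extra terms $m^k(d_\epsilon^{\otimes k_1}\otimes \id \otimes d_\epsilon^{\otimes k_2})$ introduced by the deformation.
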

The proof is more difficult than the cylindrical case, as $\CF(K)$ is \emph{not} the deformation of a mapping cylinder.
To show that $K$ is unobstructed, we must construct a bounding cochain for $K_{\mu^\epsilon_D}$ accounting for an interesting holomorphic disk with boundary on $K_{\mu^\epsilon_D}$. 
The main observation for mutation cobordisms is that portion of the pearly differential on $\CF(K)$ arising from this holomorphic disk exactly cancels out the non-cylindrical topology of $K$.

We verify in a computation included in the appendix that this theorem can be applied to the setting of monotone Chekanov and product tori in $(\CC^2)\setminus\{z_1z_2=1\}$.
In this example, the bounding cochain on the Chekanov and product tori inherited from the Lagrangian cobordism corrects the cohomology of these mutant Lagrangians. This correction matches the wall-crossing transformations constructed in \cite{auroux2009special}.
This gives (to our knowledge) the first example of a non-cylindrical two-ended Lagrangian cobordism yielding an equivalence in the Fukaya category.
This also gives a Fukaya category interpretation to the wall-crossing result of \cite{pascaleff2017wall}, and the invariance of immersed Floer cohomology in this setting (as computed by \cite{palmer2019invariance}). 
Our result can be considered as an application of the \cite{fukaya2010lagrangian,biran2013fukayacategories} viewpoint on wall-crossing to non-Hamiltonian isotopic Lagrangian submanifolds.

Finally, we include some auxiliary results: a weakly filtered version of the Whitehead theorem (\cref{thm:curvedhtt}), definition of domain and label dependent perturbation systems for the pearly model (\cref{subsec:prebrokentrees}), construction of stabilizing divisors for Lagrangian cobordisms (\cref{lem:weaklystabilizedcobordism,lem:estabilizedcobordism}), and compatibility of the pearly model of a Lagrangian cobordism with its ends (\cref{assum:pearlycompatibility}). 	\subsection{Structure of the paper}
	
\Cref{sec:background} discusses the pearly model of Floer cohomology in the setting of Lagrangian cobordisms. We review some existing versions of the pearly model in \cref{subsec:floerbackground}, as well as provide an outline extending the pearly model to the non-compact setting. We summarize the relevant results from the appendices which are required for the remainder of the paper.  The first is a way to construct perturbation datum for the purpose of computing specific structure constants for the pearly model. The second is the extension of the stabilizing divisor pearly model of \cite{charest2015floer} to the setting of Lagrangian cobordisms. The proofs of these results are delayed until \cref{subsec:prebrokentrees,app:cobordismstabilizingdivisor}.

In \cref{sec:continuation}, we look at a toy model where we recover invariance of $\CF(L)$ under Hamiltonian isotopy using non-monotone Lagrangian cobordisms.
The core of the proof is to show that the Lagrangian cobordism gives an example of an $A_\infty$ mapping cocylinder, from which a continuation map can be recovered by the application of \cref{thm:cylfrommap}.
The cobordism is our first example of a non-monotone cobordism giving an equivalence of objects in the Fukaya category, and the method of proof provides a road map to the more general case considered later.

\Cref{sec:cobordismandmutations} contains the main theorem of the paper. 
We first introduce the Lagrangian mutation cobordism from \cite{haug2015lagrangian} in \Cref{subsec:haugcobordism}.
We take some care to work out the parameterization of this cobordism explicitly in the setting of $\CC^2\setminus \{z_1z_2=1\}$.
This care is necessary to later characterize the kinds of holomorphic disks which can appear with boundary on this cobordism, and to compute explicitly the flux swept between the ends of the Lagrangian cobordism. 
\Cref{subsec:generalmutations} looks at the mutation cobordism constructed from a mutation pair -- a Lagrangian $L$ bounding a Lagrangian disk $D$. 
The remainder of the section is spent proving \cref{thm:wallcrossingcobordism}, which shows that this mutation cobordism $K_{\mu_D}$, when equipped with an appropriate bounding cochain, gives an equivalence in the Fukaya category. 
The idea of proof is very similar to \cref{sec:continuation}, in that we show that the mutation cobordism $K_{\mu_D}$ has Floer cohomology of a mapping cocylinder.
This is complicated by the fact that the underlying topology of $K_{\mu_D}$ is not that of a cylinder.  We outline the proof here. 
\begin{itemize}
    \item \Cref{subsub:Morsefunction}: we construct a  Morse function for $K_{\mu_D}$ and point out how this cobordism fails to be topological cylinder between its ends. 
    \item \Cref{subsub:exceptionalholomorphicdisk}: we find a holomorphic disk with boundary on $K_{\mu_D}$. 
    \item \Cref{subsub:removingcurvature}: we show that this holomorphic disk contributes to $m^0_{K_{\mu_D}}$ in such a way that its lowest order contributions can be canceled out by a deforming cochain $d_\epsilon$. 
    \item \Cref{subsub:invertingends}: we show that there exists a homotopy between $\CF_{d_\epsilon}(K_{\mu_D})$ and its left end $\CF_{d_\epsilon|_{L^-}}(L^-)$ which uses the component of the deformed differential arising from the holomorphic disk. 
    \item \Cref{subsub:applinghtt}: we present $K_{\mu_D}$ as a weakly filtered $A_\infty$ mapping cocylinder (\cref{def:mappingcylinder}) using the homotopy constructed above and conclude the existence of a continuation map. 
\end{itemize}

In \Cref{app:mutationexamples} we look at an application of \cref{thm:wallcrossingcobordism} to the Chekanov and product tori in   $\CC^2\setminus\{z_1z_2=1\}$.
In addition to showing that these objects have the same Floer cohomology (and conjecturally correspond to the same object of the Fukaya category), the bounding cochains inherited from the Lagrangian cobordism deform the local Flux coordinates on the moduli space of Chekanov and product tori.
Assuming convergence of sums taken with $\CC$ coefficients and a multiple cover formula for Maslov index 0 disks, these coordinate changes are shown to match the computations made in \cite{auroux2007mirror}.

We additionally include three appendices with auxiliary results. 
\Cref{app:homological} states some known facts for filtered $A_\infty$ algebras and morphism of $A_\infty$ algebras.
It also includes the statements of \cref{thm:curvedhtt} and \cref{thm:cylfrommap}.
The first theorem is an extension of the homotopy transfer principle from \cite{kadeishvili1980homology} or filtered $A_\infty$ Whitehead's theorem \cite{fukaya2010lagrangian} to the setting of weakly filtered $A_\infty$ homotopies.
The second is a proof of existence and characterization of mapping cocylinders for filtered $A_\infty$ algebras.

\Cref{subsec:prebrokentrees} extends the construction of \cite{charest2015floer} to use perturbations that depend on the domain and the labels of the domain. The main application, \cref{lemma:geometricflowlines}, shows that under strong conditions we can use a non-perturbed almost complex structure to compute some of the structure coefficients for the pearly model. 

\Cref{app:cobordismstabilizingdivisor} adapts the construction of \cite{charest2015floer} to Lagrangian cobordisms. The main results are \cref{lem:weaklystabilizedcobordism,lem:estabilizedcobordism} which constructs Donaldson divisors of large degree for Lagrangian cobordisms. The Donaldson divisors can be made weakly stabilizing. Furthermore, the divisors and weakly stabilized almost complex structures are split (across the product $X\times \CC$) outside of a compact set. 
This allows us to construct a pearly model for Lagrangian cobordisms (\cref{cor:pearlymodelexistence}) and obtain compatibility between the pearly model of a Lagrangian cobordism and that of its ends (\cref{cor:projectionsareainfinity}).

 	\subsection{Notation}
	\label{subsec:notation}
We introduce here several pieces of notation that are used throughout the paper. 
\begin{itemize}
    \item Let $A_1, \ldots, A_k$ be vector spaces. The component-wise projection is the map \[\pi^{k_1|\cdots|k_i}:(A_1\oplus \cdots\oplus A_i)^{\tensor k_1+\cdots + k_i}\to A_1^{\tensor k_1}\tensor \cdots \tensor A^{\tensor k_i}_i.\] 
    \item Let $A=A^-\oplus A^0\oplus A^+$ and let $m^k: A^{\tensor k}\to A$  be a multilinear map over field $\mathbb F$.
    When we want to restrict the domain and codomain, we'll write  
    \[m^{\ell_1\cdots \ell_k;\ell_0}:\bigotimes_{i=1}^k A^{\ell_i}\to A^{\ell_0}\]
    where  $\ell_i$ are chosen from the labels $\{+, -, 0\}$.
    In this notation, $m^{;\ell}:\mathbb F\to A^\ell$ denotes an element in $A^\ell$. 
    \label{item:multilinearnotation}
    \item Let $f: X^+\to X^-$ be a map of topological spaces. 
    The mapping cylinder is the topological space $\cyl(f)= (X^+\times I)\sqcup_{(x^+\times \{1\}\sim f(x^+))}X^-$. 
    This comes with inclusion maps $i_{X^\pm}: X^\pm\to \cyl(f)$.
    Additionally, there is a pushforward map $\pi: \cyl(f)\to X^-$. 
    We denote the maps on the cochains by 
    \begin{align*}
        \beta^\pm:=&(i_{X^\pm})^*:C^*(\cyl(f))\to C^*(X^\pm)\\
        \alpha^-:=&(\pi)^*:C^*(X^-)\to C^*(\cyl(f)).
    \end{align*}
    \item All symplectic manifolds considered are rational and aspherical. Unless specifically noted, all Lagrangian submanifolds considered are spin, graded, rational, and embedded.
\end{itemize} 	\subsection{Acknowledgements}
	First and foremost, I would like to thank my advisor Denis Auroux for his patience and explanations during my graduate studies at UC Berkeley.
I would also like to thank Paul Biran and Luis Haug, with whom I had many very productive conversations furthering my understanding of the anti-surgery cobordism. 
Additionally, I'd like to acknowledge Andrew Hanlon for the helpful comments on the introduction to this paper, as well as  Ailsa Keating, Nick Sheridan, and Chris Woodward for several valuable discussions.
This paper further benefited from the very thoughtful comments of an anonymous reviewer addressing expositional and mathematical gaps from an earlier version of this manuscript, especially with regards to adapting the stabilizing divisor approach of the pearly model to the setting of Lagrangian cobordisms.
Finally, I would also like to thank ETH Z\"urich for their hospitality in Fall 2018 during which a portion of this work was completed. 

This work was partially supported by:
NSF grants DMS-1406274 and DMS-1344991; 
Simons Foundation grant (\#\,385573, Simons Collaboration on Homological Mirror Symmetry); 
EPSRC Grant (EP/N03189X/1, Classification, Computation, and Construction: New Methods in Geometry); and
G:(EU-Grant)850713.

\section{Background}
	\label{sec:background}
	\subsection{Lagrangian cobordisms}
	\label{subsec:lagrangianbackground}
	
\label{subsec:lagrangiancobordism}
Lagrangian cobordisms are Lagrangian submanifolds in $X\times \CC$ which interpolate between Lagrangian submanifolds of $X$.
In addition to providing an interesting relation on Lagrangian submanifolds of $X$, cobordant Lagrangian submanifolds are frequently equivalent objects of the Fukaya category.

\begin{definition}[\cite{arnol1980lagrange}]
    Let $L^-, L^+$ be Lagrangian submanifolds of $X$. 
    A \emph{two-ended Lagrangian cobordism} between $L^-$ and  $L^+$ is a proper Lagrangian submanifold $K\subset X\times \CC$ which satisfies the following conditions:
    \begin{itemize}
        \item \emph{Fibered over ends:} There exist constants $t^-< t^+ $ such that 
        \[
            K\cap \{(x, z)\;:\; \Re(z)\geq t^+-\epsilon\}= L^+\times \RR_{\geq t^+-\epsilon}
        \]
        \[
            K\cap \{(x, z)\;:\; \Re(z)\leq t^-+\epsilon\}= L^-\times \RR_{\leq t^-+\epsilon}
        \]

        \item\emph{Compactness:}  There exists a constant $s>0$ so that the projection $\pi_{\jmath\RR}: K\to \jmath \RR \subset \CC$ is contained within an interval $[-\jmath s, \jmath s]$.
    \end{itemize}
    We denote such a cobordism $K:L^-\rightsquigarrow L^+$.
    \label{def:cobordism}
\end{definition}
We visualize a Lagrangian cobordism by drawing its projection to the $\CC$ parameter, as in \cref{fig:bottlenecks}. 
There is a generalization of this definition to cobordisms with multiple ends, which requires that the Lagrangian cobordism fibers over rays of fixed argument outside of a compact subset of $\CC$.
A specialization of the result of \cite{biran2013fukayacategories} proves that the equivalence relation of monotone cobordance descends to the Fukaya category.

\begin{theorem}
    Suppose that $K: L^-\rightsquigarrow L^+$ is a monotone Lagrangian cobordism.
    Then $L^-$ and $L^+$ are equivalent objects of the Fukaya category. 
\end{theorem}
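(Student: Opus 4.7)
The plan is to adapt the argument of Biran--Cornea by regarding $K$ as an object in a suitable Fukaya category $\Fuk(X\times \CC)$ whose objects include both the cobordism and ``probe'' Lagrangians of the form $P\times\gamma$, where $P\subset X$ is an arbitrary monotone test Lagrangian and $\gamma\subset\CC$ is a properly embedded curve that is horizontal at infinity. Admissibility is imposed by requiring Lagrangians in $X\times\CC$ to be cylindrical over horizontal rays going to $\Re z=+\infty$. Under this convention, $K$ is admissible by \cref{def:cobordism}, and monotonicity of $K$ together with monotonicity of $P$ makes the relevant Floer complexes well-defined without bounding cochains.

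First I would establish the key vanishing and splitting lemmas. For a horizontal line $\gamma_c\subset\CC$ at imaginary height $c\notin\{c^-,c^+\}$ lying entirely in the fibered region, a Hamiltonian isotopy in the $\CC$-factor moves $\gamma_c$ off the compact part of $K$, so $\gamma_c\times P$ and $K$ are displaceable and $HF^\bullet(K,P\times\gamma_c)=0$. For a curve $\gamma$ whose two ends are horizontal rays going to $+\infty$ at heights $c^-$ and $c^+$ respectively, the intersection $K\cap(P\times\gamma)$ decomposes as a disjoint union $(L^-\cap P)\sqcup(L^+\cap P)$, each factor lying in one of the cylindrical ends. Applying the open mapping theorem to $\pi_\CC$ forces any Floer strip with boundary on $K$ and $P\times\gamma$ to project to a strip in $\CC$ whose boundary sits on $\pi_\CC(K)$ and on $\gamma$; an area/index computation (using monotonicity) then shows that strips between intersections at the $c^-$ end only contribute the Floer differential of $CF^\bullet(L^-,P)$, strips between $c^+$ intersections give $CF^\bullet(L^+,P)$, and mixed strips assemble into a connecting map.

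Next I would turn this splitting into an equivalence. Consider a one-parameter family $\{\gamma_t\}$ of curves interpolating between the ``connected'' shape $\gamma$ described above and the disjoint union of the two horizontal lines at heights $c^-$ and $c^+$. In the degenerate limit the probe splits as $(P\times\gamma_{c^-})\sqcup(P\times\gamma_{c^+})$, and the two previous computations identify the Floer complex with $CF^\bullet(L^-,P)\oplus CF^\bullet(L^+,P)$. Along the isotopy the Floer complex $CF^\bullet(K,P\times\gamma_t)$ is invariant up to quasi-isomorphism, yielding an exact triangle in $\Fuk(X)$ whose third term is the quasi-isomorphism type of $K$ viewed through $P$. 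Combining with the vanishing lemma for horizontal lines at auxiliary heights, the iterated cone collapses and one deduces, for every test $P$, a natural quasi-isomorphism $CF^\bullet(L^-,P)\simeq CF^\bullet(L^+,P)$ induced by a morphism independent of $P$; by Yoneda this morphism is itself a quasi-isomorphism $L^-\simeq L^+$ in $\Fuk(X)$.

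The main obstacle I expect is the compactness/regularity package for Floer theory in $X\times\CC$. One must rule out strips escaping through the non-compact ends of $K$, which requires a careful maximum-principle argument for $\pi_\CC\circ u$ combined with the cylindrical structure at infinity, and one must match moduli spaces of strips downstairs (in $X$) with those upstairs (in $X\times\CC$) up to a unique trivial $\CC$-factor. A secondary difficulty is the choice of almost complex structure: it must be split near the ends of $K$ so that $\pi_\CC$ is holomorphic there, while remaining generic enough to achieve transversality in the compact region. Once these technical points are in place, the monotonicity hypothesis controls bubbling of Maslov index $0$ disks on $K$ and on the probes, and the argument goes through as sketched.
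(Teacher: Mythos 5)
Your sketch is essentially a reconstruction of the original Biran--Cornea argument, and it is sound as a route to this theorem; note, however, that the paper does not prove this statement at all --- it is recorded as a citation to \cite{biran2013fukayacategories} and used as motivation. Where the comparison becomes interesting is with the paper's own machinery for the analogous equivalences it does prove (\cref{thm:cylindricityofcobordism} and \cref{thm:wallcrossingcobordism}), which takes a genuinely different route. You work extrinsically: you enlarge the Fukaya category of $X\times\CC$ to include probe objects $P\times\gamma$, prove vanishing for horizontal probes and a splitting $\CF(K,P\times\gamma)\simeq \CF(L^-,P)\oplus \CF(L^+,P)$ for connected probes, and deduce the equivalence from an exact triangle plus a Yoneda argument. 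The paper instead works intrinsically with the pearly complex of $K$ itself: a Morse function with bottlenecks decomposes $\CF(K,h)$ as $\CF(L^-)\oplus\CF(L^+)[1]\oplus\CF(L^+)$ with upper-triangular differential, \cref{assum:regularization} identifies the diagonal blocks with the Floer complexes of the ends, invertibility of the connecting component $m^+_0$ exhibits $\CF(K,h)$ as a mapping cocylinder (\cref{def:mappingcylinder}), and the algebraic homotopy transfer theorem (\cref{thm:cylfrommap}) produces the $A_\infty$ morphism $\Theta_K$. Your approach buys the full triangulated statement (iterated cones for many-ended cobordisms) and stays within standard monotone Floer theory, but it leans on monotonicity to suppress Maslov index $0$ bubbling and does not obviously accommodate bounding cochains; the paper's algebraic route sacrifices the cone decomposition for multi-ended cobordisms but survives the passage to filtered, curved $A_\infty$ algebras, which is exactly what is needed for the non-monotone mutation cobordism that is the paper's main object of study. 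The technical obstacles you flag --- the open mapping/maximum principle for $\pi_\CC\circ u$, split almost complex structures near the ends, and matching moduli spaces upstairs and downstairs --- are precisely the ones the paper isolates in its claims following \cref{def:cobordism} and in \cref{assum:regularization}, so your assessment of where the analytic difficulty lies is accurate.
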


In \cite{biran2013fukayacategories}, it is stated that this result is expected to hold in a more general setting where the Lagrangians are equipped with the additional data of orientations, local systems, bounding cochains, etc.
We look to understand this result in the context of pearly algebras of cobordant Lagrangians, where the Lagrangian cobordism may be non-monotone but unobstructed by bounding cochain.  

\begin{definition}
    \label{def:admissiblelagrangiancobordism}
    Let $(L^-, h^-)$ and $(L^+, h^+)$ be two Lagrangians equipped with choices of Morse function, spin structure, and grading.  
    An \emph{admissible Lagrangian cobordism} between these is the data of $(K, h)$ where
    \begin{itemize}
        \item
              $K$ is a Lagrangian cobordism between $L^-$ and $L^+$;
        \item
              $h:K\to \RR$ is a Morse function; and
        \item We have a spin structure and grading for $K$ compatible with those of the ends.
    \end{itemize}
    The Morse function $h$ is required to satisfy the following compatibility conditions:
    \begin{enumerate}
        \item The Morse flow restricted to the fibers above real coordinates $t^-$ and $t^+$ are determined by $h^\pm$,
    \begin{align*}
        \grad h|_{\pi_\RR^{-1}(t^-)} = (\grad h^-, 0) && \grad h|_{\pi_\RR^{-1}(t^+)}=(\grad h^+, 0) 
    \end{align*}
        \item The fibers above $t^\pm$ are perturbation of Morse-Bott maximums in the sense that at points $q\in K$,
        \begin{align*}
             dt(-\grad h)|_{\pi_\RR(q)<t^-}<0& &dt(-\grad h)|_{t^- <\pi_\RR(q) < t^-+\epsilon}>0\\
             dt(-\grad h)|_{\pi_\RR(q)>t^+}>0& &dt(-\grad h)|_{t^+-\epsilon <\pi_\RR(q) < t^+}<0.
        \end{align*}
\end{enumerate}
\end{definition}
One way to construct an admissible Morse function for a Lagrangian cobordism is to consider the function $\tilde h$ for $K$  as drawn in \cref{fig:bottlenecks} which is only dependent on $t$, and then take a Morsification which is constructed in the regions around $t^\pm$ using a $h^\pm$-perturbation. 
The function $\tilde h$ is Morse-Bott near the ends, and we call critical submanifolds $L^\pm\times\{t^\pm\}$ the bottlenecks of the cobordism.
\begin{figure}
    \centering
    \begin{tikzpicture}
\fill[gray!20]  (-1.5,3) rectangle (4.5,0.5);
    \draw[fill=gray!50] (-1.5,1.5) .. controls (-1,1.5) and (-0.5,1.5) .. (0,1.5) .. controls (0.5,1.5) and (0.5,1) .. (1,1) .. controls (1.5,1) and (1.5,1.5) .. (2,1.5) .. controls (2.5,1.5) and (3,1.5) .. (4.5,1.5) .. controls (3,1.5) and (2.5,1.5) .. (2,1.5) .. controls (1.5,1.5) and (1.5,2) .. (1,2) .. controls (0.5,2) and (0.5,1.5) .. (0,1.5) .. controls (-0.5,1.5) and (-1,1.5) .. (-1.5,1.5);

    \draw[->] (-1.5,3.5) -- (-1.5,5.5);
    \draw[->] (-1.5,3.5) -- (4.5,3.5);
    \draw (-1,5) .. controls (-0.75,5.25) and (-0.5,5.25) .. (0,5);
    \draw (1.6,4.25) .. controls (2.1,4) and (2.35,4) .. (2.6,4.25) .. controls (2.85,4.5) and (3.1,4.5) .. (3.35,4.25);
    \draw (0,5) -- (1.6,4.25);
    \draw (-1,5) -- (-1.5,4.5);
    \node at (-1.5,5.75) {$\tilde h(t)$};
    \node at (5.25,3.5) {$t=\pi_{i \mathbb R}$};
    \node at (1,1.5) {$\pi_\mathbb{C}(K)$};
    
    \draw[->, dotted] (-0.65,5) -- (-0.65,1.75);
    \draw[->,dotted] (2.975,4.25) -- (2.975,1.75);
    \node[right, above] at (-0.5,3.5) {$t^{-}$};
    
    \node[right, above] at (3,3.5) {$t^{+}$};
    
    \draw[dotted] (-0.65,-1) -- (-0.65,1.2);
    \draw[dotted] (3,-1) -- (3,1.2);
\node at (-0.5,-1.5) {$\text{CM}^\bullet(L^-, h^-)$};
\node at (1.7,-0.5) {$\text{CM}^\bullet(K, h)$};
\node at (3.1,-1.5) {$\text{CM}^\bullet(L^+, h^+)$};
\draw[->] (1,-0.75) -- (-0.2,-1.15);
\draw[->] (1.6,-0.75) -- (2.8,-1.15);
\node at (4,2.5) {$\mathbb C$};
\node[left] at (-1.5,2.25) {$\jmath s$};
\node[left] at (-1.5,0.75) {$-\jmath s$};
\draw[dotted] (-1.5,0.75) -- (4.5,0.75);
\draw[dotted] (-1.5,2.25) -- (4.5,2.25);
\end{tikzpicture}
     \caption{The profile of the Morse function for a cobordism. Bottlenecks are inserted on the ends of the cobordism.}
    \label{fig:bottlenecks}
\end{figure}
\begin{prop}
Let $K$ be a Lagrangian cobordism equipped with an admissible Morse function.
\begin{itemize}
    \item Even though $K$ is non-compact, the  moduli space of Morse flow lines between two critical points admits a compactification by broken flow lines and
    \item The Morse flow lines $\gamma$ of $\CM(K, h)$ for which $\pi_\RR(\gamma)=t^\pm$ are exactly the Morse flow lines of $CM(L^\pm, h^\pm)$.
\end{itemize}
    As a result, we obtain projections of the Morse cochain complexes to the ends:
\[
    \setlength\mathsurround{0pt}\begin{tikzcd}
        \; & \CM(K, h) \arrow{dl}{\beta^{-}} \arrow{dr}{\beta^+} \\
        \CM(L^-, h^-)& & \CM(L^+, h^+).
    \end{tikzcd}\setlength\mathsurround{.8pt}
\]
\label{prop:morseprojection}
\end{prop} 	
\subsection{Pearly model for Floer complex}
	\label{subsec:floerbackground}
	The receptacle we choose for the count of holomorphic disks with boundary on a compact Lagrangian $L$ is the \emph{pearly Floer cohomology} of a Lagrangian.
This is a filtered $A_\infty$  algebra $\CF(L, h)$ specified by a choice of a Lagrangian submanifold $L$, admissible Morse function $h: L\to \RR$, and perturbation data. 
The algebra structure is determined by taking a count of treed-disks, which are flow trees of the Morse function $h$ deformed with insertions of holomorphic disks with boundary on $L$ at the vertices.
Several versions of this algebra have been constructed with a variety of conditions imposed on the Lagrangian $L$ and the manifold $X$.
Some models of the pearly Floer cohomology include \cite{fukaya1997zero,biran2008lagrangian,lipreliminary,charest2015floer}.

In \cite{biran2008lagrangian}, a version of the pearly quantum homology for monotone Lagrangian submanifolds was constructed. 
In this model, the symplectic spaces studied are symplectically convex at infinity, and the Lagrangian submanifolds considered are compact and monotone. 
With these hypotheses, \cite{biran2008lagrangian} gave the definition of the pearly quantum cohomology $\QC(L, h)$, which is a deformation of Morse chain complex $CM_\bullet(L, h)$.
Furthermore, in \cite{biran2013lagrangian} they extend this theory to the setting where $K\subset X\times \CC$ is a monotone Lagrangian cobordism (\cref{def:cobordism}). 
Lagrangian cobordisms have pearly quantum homology with inclusion maps  $\QH(L^{\pm},h)\to \QH(K, h)$, where $L^\pm$ are the ends of the Lagrangian cobordism $K$. 

In \cite{charest2015floer}, the pearly model is constructed with relaxed requirements for the Lagrangian $L$. For compact symplectic spaces $X$ with rational symplectic form, \cite{charest2015floer} defines a pearly algebra $\CF(L, h)$.
The key piece of additional input into this construction is a stabilizing divisor which intersects every holomorphic disk with boundary on $L$.
These intersection points stabilize the domain of the holomorphic disks, allowing one to construct domain-dependent perturbations to the Cauchy Riemann equations. 
With this deformation, \cite{charest2015floer} cuts out a regularized moduli space of treed disks.

\begin{theorem}[\cite{charest2015floer}]
    Consider $(X, \omega)$ a rational symplectic manifold. Let $L\subset X$ be a spin and graded Lagrangian submanifold, $h: L\to \RR$ a Morse function. There exist a weakly-stabilizing divisor $D_X\subset X\setminus L$, and admissible domain-dependent perturbation datum $\mathcal P$ for $L$ defining a filtered $A_\infty$ algebra $\CF(L, h, \mathcal P)$. The filtered $A_\infty$ homotopy type of $\CF(L, h, \mathcal P)$ is independent of Morse function, weakly-stabilizing divisor, and admissible domain-dependent perturbation datum. 
\end{theorem}
For this reason, when the choice of perturbation $\mathcal P$ or Morse function $h$ is unimportant, we will write $\CF(L, h)$ or $\CF(L)$ instead.

In \cref{subsec:prebrokentrees,app:cobordismstabilizingdivisor} we make two extensions to the construction of \cite{charest2015floer}. We include short summaries of these two appendices here and highlight the results from those sections which are necessary for the main portion of this article. 

\subsubsection{Domain and Label dependent perturbations}
\label{subsub:domainandlabelsummary}
The first modification, constructed in \cref{subsec:prebrokentrees}, is to introduce perturbation datum for treed-pseudoholomorphic disks which depend not only on the domain, but also on the labelling of the domain. 
The labels are chosen from the critical points of the Morse function $h$. 

The advantage to using this set of perturbations comes from examples where a geometrically chosen almost complex structure and Morse function provides us with a computable set of regularly cut-out pseudoholomorphic flow trees between a specific set of Morse critical points. We'd like to keep these pseudoholomorphic flow trees, but it is unclear if our geometrically chosen almost complex structure is regularizing for pseudoholomorphic treed disks between other Morse critical points. By allowing our perturbation datum to depend additionally on the limiting labels of the pseudoholomorphic flow trees, we can keep our perturbation fixed on the areas where we know how to make our calculation, and modify it elsewhere.  See \cref{rem:abstractperturbations} for more detailed comments on where domain and label dependent perturbations fit in with other regularizations methods.

A specific example of where this works out is given in \cref{subsec:geometricflowlines}, the results of which we now state.
Let $J^0$ be an almost complex structure and let $h: L\to \RR$ be a Morse-Smale function. Suppose that there is a $J^0$-holomorphic disk $u_{ex}: (D^2, \partial D^2)\to (X, L)$ with the properties that :
\begin{itemize}
    \item The boundary of $u_{ex}$ is transverse to all upwards and downward flow spaces of critical points of $h$ and
    \item The disk $u_{ex}$ is regular and is the unique disk that intersects the stabilizing divisor at a single point.
\end{itemize}

Suppose that $\deg(x_1)=1, \deg(x_0)=2$, and that there are no Morse flow-lines from $x_1$ to $x_0$.
The hypothesis on the regularity of $u_{ex}$ and transversality of $\partial u_{ex}$ with the flow spaces shows that unbroken pseudoholomorphic flow-trees whose type belong to 
\[\mathbb I^{\leq 1}_{*; x_0}:=\left\{\underline \Gamma_P\;\middle|\;\ind(\underline \Gamma_P)\leq 0, \parbox{6cm}{Input label is empty or $x_1$, output is $x_0$, at most 1 interior marked point} \right\}\]
 are regularly cut for $\mathcal P^0$ ,the trivial perturbation determined by $J^0$ and $h$.

 The hypothesis on \cite[Theorem 4.19]{charest2015floer} is too restrictive to show that perturbation  system $\mathcal P^0_{\mathbb I^{\leq 1}_{*; x_0}}$ defined for types $\underline \Gamma_P\in \mathbb I^{\leq 1}_{*; x_0}$ can be coherently extended to a regular perturbation system for all combinatorial types. This is because $\mathbb I^{\leq 1}_{*; x_0}$ is not downward closed (informally: the coherence conditions for combinatorial types in $\mathbb I^{\leq 1}_{*; x_0}$ involve combinatorial types outside $\mathbb I^{\leq 1}_{*; x_0}$, see 
 \cref{def:downwardclosed}). For instance, the combinatorial type of Morse flow tree with inputs $y, x_1$ and output $x_0$ appears in the downward closure of $\mathbb I^{\leq 1}_{*; x_0}$, but not in $\mathbb I^{\leq 1}_{*; x_0}$ --- this tree could additionally break at further critical points, giving more broken types in the downward closure of our original set of combinatorial types. It is very unlikely that the trivial perturbation of Morse flow-lines will be regularizing for all of these combinatorial types, as Morse flow-trees are never regular for the trivial perturbation (unless the moduli space is empty!). So, we will usually not be able to find a $\mathcal P$ which is a coherent regular extension of $\mathcal P^0_{\mathbb I^{\leq 1}_{*; x_0}}$.

 However, we can take the following strategy: given $\mathcal P^0$ which regularly cuts out our moduli spaces $\mathbb I^{\leq 1}_{*; x_0}$ for some subset of types, we look for a regular coherent perturbation $\mathcal P$, \emph{not necessarily agreeing with $\mathcal P^0$}, but having the same moduli spaces for the desired set of combinatorial types.

 \begin{lemma}[Paraphrasing \cref{lemma:geometricflowlines}]
     There exists a full regular coherent perturbation system $\mathcal P$ so that for every combinatorial type in $\underline \Gamma_P\in \mathbb I^{\leq 1}_{*; x_0}$, the count of $\mathcal P$-perturbed pseudoholomorphic curves of type $\underline \Gamma_P$ agrees with the count of $\mathcal P^0$-perturbed pseudoholomorphic curves.
\end{lemma}

\subsubsection{Pearly Model for Lagrangian Cobordisms }

The second modification of the pearly model of \cite{charest2015floer}, constructed in \cref{app:cobordismstabilizingdivisor}, concerns the extension of the results of \cite{biran2013lagrangian} to \cite{charest2015floer}. We show that perturbation datum and stabilizing divisors may be chosen so that projection maps from the pearly Floer cohomology of a Lagrangian cobordism to the ends exist.
While we use \cite{charest2015floer} as a framework due to our familiarity with this particular version of the pearly model, we expect the techniques employed here should transfer in the most general sense to other models of Lagrangian Floer cohomology, or abstract regularization method used to construct the moduli spaces of pseudoholomorphic treed disks.

\begin{lemma}[Paraphrasing of \cref{lem:weaklystabilizedcobordism,lem:estabilizedcobordism}]
    Let $K:L^-\rightsquigarrow L^+$ be a Lagrangian cobordism, and $h: K\to \RR$ a compatible Morse function. 
    There exists a weakly stabilizing divisor $D\subset (X\times \CC)\setminus K$, and 
    $\mathcal P_{K}$ for $K$ satisfying the following properties:
    \begin{itemize}
        \item $\mathcal P_{K}$ is admissible , in that it is $E$-weakly stabilized by the divisor $D$, regular, and coherent.
        \item $\mathcal P_{K}$ consists of domain-dependent choices of almost complex structure for which the projection $\pi_\CC: X\times \CC\to \CC$ is holomorphic outside of a compact set. 
    \end{itemize}
\end{lemma}
This can be used to prove that the moduli space of pseudoholomorphic treed disks is compact (\cref{prop:compactness}) and compatible with the perturbation systems $\mathcal P_{L^\pm}$ for the ends $L^\pm$ of the Lagrangian cobordism (\cref{assum:pearlycompatibility}). Compactness allows us to define the pearly model for Lagrangian cobordisms (\cref{cor:pearlymodelexistence}), while compatibility proves that the projections
   \[
       \setlength\mathsurround{0pt}\begin{tikzcd}
           \; & \CF(K, h) \arrow{dl}{\beta^{-}} \arrow{dr}{\beta^+} \\
           \CF(L^-, h^-)& & \CF(L^+, h^+)
       \end{tikzcd}\setlength\mathsurround{.8pt}
   \]
are filtered $A_\infty$-homomorphisms (\cref{cor:projectionsareainfinity}). This is the extension of \cref{prop:morseprojection} to Lagrangian Floer theory.

\section{Toy model: continuation maps from cylindrical cobordisms}
	\label{sec:continuation}
	
	We now give an example where the results of \cite{biran2013fukayacategories} apply to the non-monotone setting.
While we now allow our Lagrangian cobordisms to possibly bound holomorphic disks, we require that the cobordism's topology be cylindrical.
A relevant set of examples are the suspension of a Hamiltonian isotopy.

\begin{definition}[Suspension of a Hamiltonian Isotopy]
	Let $H_t: X\times \RR_t\to \RR$ be a Hamiltonian  with support on $[t^-, t^+]$. 
		Let $\phi_t: L\times \RR_t$ be a Lagrangian isotopy induced by the Hamiltonian flow of $H_t$. The \emph{suspension of the Hamiltonian isotopy $\phi_t$}  is the Lagrangian cobordism $K_{H_t}$ given by the embedding
	\begin{align*}
		L\times \RR\into & X\times \CC             \\
		(x, t) \mapsto   & (\phi_t(x), t+\jmath H_t(\phi_t(x))).
	\end{align*}
	\label{def:suspensioncobordism}
\end{definition}
More generally, given a Lagrangian homotopy $\phi_t: L\times \RR_t\to X$ which is exact (in the sense that $\iota_{\frac{d\phi}{dt}}\omega|_{L_t}$ is an exact form for all $t$,), there exists a similar construction for the suspension of the exact homotopy.
In a similar vein to the equivalences constructed in \cite{biran2013fukayacategories}, the suspension Lagrangian cobordism gives a continuation map for the pearly algebra of $L$.

\begin{prop}
	Let $L^-, L^+\subset X$ be two Lagrangian submanifolds, with Morse functions $h^\pm: L^\pm\to \RR$. 
	Suppose that $K\subset X\times \CC$ is an embedded Lagrangian cobordism with ends on $L^-, L^+$.
	Furthermore, suppose that there is a diffeomorphism $K\cong L^-\times \RR$. 	Then there exists a filtered $A_\infty$ homomorphism $\Theta_K:\CF(L^-; h^-)\to CF(L^+; h^+)$.
	\label{thm:cylindricityofcobordism}
\end{prop}
\begin{proof}
	We pick a particular Morse function $h$ for $K$.
	Take a preliminary function 
	\begin{equation}
		\tilde h(t): \RR\to \RR
		\label{eqn:Morsebottfunction}
	\end{equation} which has maximums at $t^-$ and $t^+$ and a minimum at $t^+-\epsilon$.
		See \cref{fig:bottlenecks}.
	This defines a Morse-Bott function on $K$. 
	We Morsify this function by taking perturbations based on the functions $h^\pm$ as considered in \cref{prop:morseprojection}.
	By construction, we have an identification 
	\[\CF(K, h)=\CF(L^-, h^-)\oplus \CF(L^+, h^+)[1]\oplus \CF(L^+, h^+)\]
	as graded vector spaces. 
From \cref{assum:pearlycompatibility}, the pearly differential on $K_H$ decomposes as
	\begin{equation}
	m_{K}^1=	
	\begin{pmatrix}
	m^{-;-} & 0 & 0\\
	m^{-;0} & m^{0;0} & m^{+;0}\\
	0 & 0& m^{+;+}
		\end{pmatrix}.
		\label{eq:morsedifferentialcylinder}
	\end{equation}
	where $m^{\pm;\pm}$ matches $m^1_{L^\pm}$. 
	
	See \cref{item:multilinearnotation} for notation.
	We additionally have the projection $A_\infty$ homomorphisms $\beta^\pm: \CF(K, h)\to \CF(L^\pm, h^\pm)$.
	The map	$m^{+;0}$ is a deformation of the Morse differential on $K$ by terms of non-zero filtration.
	Since the Morse portion of the differential 
	\[\underline{m}^{+;0}:\CM(L^+, h^+)\to \CM(L^+, h^+)[1]\]
	is the identity, the Floer differential $m^{+;0}$ which arises as a deformation by higher energy terms remains an isomorphism. 
	Therefore  $\CF(L^-, h^-)\ot \CF(K, H)\to \CF(L^+ , h^+)$ is a filtered $A_\infty$ mapping cocylinder (\cref{def:mappingcylinder}).
	We obtain a map $\alpha: \CF(L^-, h^-)\to \CF(K, h)$  and a pushforward-pullback map  (\cref{thm:cylfrommap})
	\[\Theta_K:=\beta^+\circ \alpha: \CF(L^-, h^-)\to CF(L^+, h^+).\]
\end{proof}
\begin{remark}
	The hypothesis that $K$ is diffeomorphic to $L^-\times \RR$ can be weakened to $K$ is a $h$-cobordism, in which case the map $\underline{m}^{+;0}$ is a quasi-isomorphism. While \cref{def:mappingcylinder} would no longer apply, one can use \cite[Theorem 4.2.45]{fukaya2010lagrangian} to obtain the continuation map. 
\end{remark}
It immediately follows that there is a continuation map for the pearly model of Lagrangian Floer theory.
\begin{corollary}
	 Let $L^-, L^+$ be Hamiltonian isotopic Lagrangian submanifolds.
	There exists a pullback-pushforward map 
	\[
	\Theta_{H_t}:\CF(L^-,h^-)\to \CF(L^+, h^+)
	\]
	called the \emph{continuation map} of $H_t$.
	\label{cor:mappingcylindercochains}
\end{corollary}
\begin{proof}
	Consider the suspension cobordism $K_{H_t}$, and apply \cref{thm:cylindricityofcobordism}. 
\end{proof}
Additionally, the presence of such a map proves the existence of maps between the Maurer-Cartan space of these Lagrangians.
\begin{corollary}
	Let $L^-$ and $L^+$ be Hamiltonian isotopic Lagrangian submanifolds. 
	If $\CF(L^-)$ is unobstructed by bounding cochain $b^-$, then $L^+$ is unobstructed by the bounding cochain $b^+=(\Theta_{H_t})_*b^-$. 
	Furthermore, the continuation map can be made a map of uncurved $A_\infty$ algebras $(\Theta_{H_t})_\flat: \CF_{b^-}(L^-)\to \CF_{b^+}(L^+)$.
	\label{cor:uncurvedcontinuation} 
\end{corollary}
\begin{proof}
	The existence of an filtered $A_\infty$ homomorphism implies the existence of these maps (see \cref{claim:fflat}).
\end{proof}

\section{Cobordisms and mutations}
	\label{sec:cobordismandmutations}
	In this section, we look at cobordisms arising from mutation configurations.
	We prove that in certain scenarios such a cobordism gives an $A_\infty$ mapping cocylinder between the Floer cohomologies of its ends. 
	In \cref{subsec:haugcobordism}, we introduce a local model for mutation in $\CC^2\setminus\{z_1z_2=1\}$.
	This local description is used to construct a holomorphic disk with boundary on the Lagrangian cobordism. 
	In \cref{subsec:generalmutations} we use this disk to build a continuation map from a Lagrangian mutation cobordism.  
	\subsection{Review of the Haug cobordism}
	\label{subsec:haugcobordism}
	
This is a review of the mutation cobordism constructed in \cite{haug2015lagrangian} between the monotone Chekanov and product torus in $(\CC^2)\setminus \{z_1z_2=1\}$.
We assemble this cobordism from pieces built in Lefschetz fibrations. 
The fibration we consider is $W=z_1z_2: \CC^2\to \CC$, which has a single nodal fiber at the origin. 

We first give a description of Lagrangian surgery in Lefschetz fibrations.
Consider the paths
\begin{align*}
	\ell_{\uparrow/\downarrow} :[0, R]\to \CC &&	
	r\mapsto \pm \jmath\cdot r^2
\end{align*}
in the base of the Lefschetz fibration with ends on the critical value of the fibration.
We take lifts of these paths to give Lagrangian thimbles $L_\uparrow, L_\downarrow\subset \CC^2$.
These Lagrangians have the topology of $D^2$ and can be parameterized in coordinates by:
\begin{align*}
	L_\uparrow=\{(x+\jmath y, y+\jmath x)\;:\; x^2+y^2 \leq R^2   \} &&
	L_\downarrow= \{(x+\jmath y, -y-\jmath x)\;:\; x^2+y^2 \leq R^2\}.
\end{align*}
We now recall the construction of Lagrangian surgery and the associated suspension cobordism from \cite{biran2013lagrangian}.
We pick a \emph{surgery profile curve}, which is a map $(a(t)+\jmath b(t)): [-R, R] \to \CC$.
The functions $a(t)$ and $b(t)$ are non-strictly increasing, we require that $(a(t)+\jmath b(t))=t$ for $t<c$ and $(a(t)+\jmath b(t))=\jmath t$ for $t>c$.
This neck provides a local model for the Lagrangian surgery.
To insert this neck at the transverse intersection point of two Lagrangian submanifolds, we take a Darboux chart around the intersection point which identifies these Lagrangian submanifolds with the totally real and imaginary subspaces of $\CC^n$ and interpolate between these two subspaces using the surgery profile curve. 
For the example we are concerned with, the surgery $L_\uparrow\# L_\downarrow$ is explicitly described in coordinates by
\begin{equation}
	\{(( a(t)+\jmath b(t))(\hat x+\jmath\hat y), (a(t)+\jmath b(t)) (\hat y+\jmath\hat x))\;:\; \hat x^2+\hat y^2= 1\}.
	\label{eq:surgeryneck}
\end{equation}

With this construction, we take special care to the order of the summands in the connect sum.
Our convention is that the end of the neck which corresponds to $t\in [-R, -c]$ corresponds to the first summand of the Lagrangian connect sum.
The \emph{neck width} of a mutation measures the amount of flux swept out as one takes an isotopy from $L_\uparrow\#L_\downarrow$ to $L_\downarrow\# L_\uparrow$.
The projection of this cobordism under $W$ is
\[
	W(L_\uparrow\#L_\downarrow)= \jmath (a(t)+\jmath b(t))^2 (\hat x^2+\hat y^2)= \jmath (a(t)+\jmath b(t))^2
\]
whose image under $W$ is a curve in the complex plane:
\begin{align*}
	\ell_\uparrow\# \ell_\downarrow:[-R, R]\to \CC  &&
	t\mapsto (-2a(t)b(t) +\jmath (a(t)^2-b(t)^2 ))
\end{align*}
Notice that when $t\in [-R, -c]$ the coordinate $b(t)=0$ and the curve matches the positive imaginary axis.
Similarly, when $t\in [c, R]$ we have that $a(t)=0$ and the curve matches the negative imaginary axis.
The real component of $\ell_\uparrow \# \ell_\downarrow$ is always non-negative.
See \cref{fig:catseye,fig:cobordismpieces} for diagrams of the pieces described above. 
We can similarly construct a profile for $L_\downarrow\# L_\uparrow$ and corresponding path $\ell_\downarrow\# \ell_\uparrow$.

Given a Lagrangian surgery, \cite{biran2013fukayacategories} describes a \emph{trace cobordism}   $K_l: L_\uparrow\# L_\downarrow\rightsquigarrow L_\downarrow\sqcup L_\uparrow$ and $K_r:L_\downarrow\sqcup L_\uparrow\rightsquigarrow L_\downarrow\# L_\uparrow$.
There is a comparison between the neck width of the cobordism and the area of the projection of the cobordism to $\CC$, which is that the area of projection of the cobordism is at least the neck-width.
By concatenating these cobordisms together, we obtain a Lagrangian cobordism $K_r\circ K_l: L_\uparrow\# L_\downarrow\rightsquigarrow L_\uparrow\# L_\downarrow$.
The projection of $K_r\circ K_l$  to $\CC$ is the eye-shaped Lagrangian drawn in \cref{fig:catseye}. 
We additional draw the slices of the Lagrangian cobordism, $K|_t:=\pi_X(\pi^{-1}_\RR(t)\cap K)$. 
\begin{figure}
	\centering
	\begin{tikzpicture}

\fill[gray!10]  (-4,1) rectangle (6.5,-2.5);
\draw (-4,-0.5) -- (-0.5,-0.5);
\draw[ fill=gray!80] (-0.5,-0.5) .. controls (0,-0.5) and (0.5,0.5) .. (1,0.5) .. controls (1.5,0.5) and (2,-0.5) .. (2.5,-0.5);
\draw[fill=gray!80] (-0.5,-0.5) .. controls (0,-0.5) and (0.5,-1.5) .. (1,-1.5) .. controls (1.5,-1.5) and (2,-0.5) .. (2.5,-0.5);
\draw (6.5,-0.5) -- (2.5,-0.5);
\draw[red, fill=gray!10] (1,0.5) .. controls (0.5,0.5) and (0.2,-0.3) .. (0.2,-0.5) .. controls (0.2,-0.7) and (0.5,-1.5) .. (1,-1.5) .. controls (1.5,-1.5) and (1.8,-0.7) .. (1.8,-0.5) .. controls (1.8,-0.3) and (1.5,0.5) .. (1,0.5);

\begin{scope}[shift={(-0.5,1.5)}]
\draw  (-3,4) rectangle (-1,2);
\node[red] at (-2,3) {$\times$};
\draw (-2,4) .. controls (-2,3.5) and (-1.5,3.5) .. (-1.5,3) .. controls (-1.5,2.5) and (-2,2.5) .. (-2,2);

\node at (-1.25,2.25) {$X$};
\node at (-2,4.5) {$K|_{-c}$};
\end{scope}
\begin{scope}[shift={(3,1.5)}]
\draw  (-3,4) rectangle (-1,2);
\node[red] at (-2,3) {$\times$};
\draw (-2,4) .. controls (-2,3.5) and (-2,3.5) .. (-2,3) .. controls (-2,2.5) and (-2,2.5) .. (-2,2);

\node at (-1.25,2.25) {$X$};
\node at (-2,4.5) {$K|_0$};
\end{scope}
\begin{scope}[shift={(6.5,1.5)}]
\draw  (-3,4) rectangle (-1,2);
\node[red] at (-2,3) {$\times$};
\draw (-2,4) .. controls (-2,3.5) and (-2.5,3.5) .. (-2.5,3) .. controls (-2.5,2.5) and (-2,2.5) .. (-2,2);

\node at (-1.25,2.25) {$X$};
\node at (-2,4.5) {$K|_c$};
\end{scope}
\draw[->] (-2.5,2.5) -- (-2.5,1.5);
\draw[->] (1,2.5) -- (1,1.5);
\draw[->] (4.5,2.5) -- (4.5,1.5);
\node at (5.5,-2) {$\mathbb C$};
\node at (-3.5,0) {$K$};
\end{tikzpicture}
 	\caption{The projection of the  eye-shaped Lagrangian cobordism, $(K_r\circ K_l\cap U)\subset X\times \CC\to \CC$.
	The cycle $\pi^{-1}_{ U}(0,0)$ is highlighted in red.}
	\label{fig:catseye}
\end{figure}
\begin{remark}
	Another way of stating the relation between area of the projection and the neck width is to say that the \emph{shadow} of the cobordism $K_r\circ K_l$ as defined in \cite{cornea2019lagrangian} is an upper bound for the widths of the two surgeries. 
	\end{remark}

We now wish to make comparisons between the surgery of the thimbles $L^{\pm}$ , Chekanov and product tori, and the Whitney sphere.
In the base of the Lefschetz fibration, one can construct the Whitney sphere by taking a $c$-shaped path $\ell_c:[-S, S]\to \CC$ from the negative end of $\ell_\downarrow$ to the positive end of $\ell_\uparrow$ in a clockwise fashion so that the concatenation $\ell_\downarrow \cdot \ell_c \cdot \ell_\uparrow$ is a matching path from the single critical point of the fibration to itself.
Let $L_c:[-S, S]\times S^1\to \CC$ be the corresponding parallel transport of the vanishing cycle.
The union $L_c\cup L_\downarrow\cup L_\uparrow$ is the Whitney sphere $L_{S^2}$.
By attaching $L_c$ to $L_\downarrow\#L_\uparrow$, we will obtain the monotone Chekanov torus, denoted  $L^-$.
By attaching $L_c$ to $L_\uparrow\#L_\downarrow$, we obtain the monotone product torus, which will be denoted $L^+$.
See \cref{fig:cobordismpieces} for a picture of these Lagrangian submanifolds. 
\begin{figure}
	\centering
	\scalebox{.75}{\begin{tikzpicture}
	\begin{scope}[shift={(-7.5,-4.5)}]
		\fill[gray!20]  (2,2) rectangle (-2,-2);
		\node at (0.5,0) {$\times$};
		\draw[blue] (0.5,0) -- (0.5,1);
		\node[right, blue] at (0.5,0.5) {$\ell_\uparrow$};
		\draw[red] (0.5,-1) -- (0.5,0);
		\node[right, red] at (0.5,-0.5) {$\ell_\downarrow$};
		\node[left, purple] at (-0.5,0) {$\ell_c$};
		\draw[purple] (0.5,1) .. controls (0.5,1.5) and (-0.5,1.5) .. (-0.5,1) .. controls (-0.5,0.5) and (-0.5,-0.5) .. (-0.5,-1) .. controls (-0.5,-1.5) and (0.5,-1.5) .. (0.5,-1);

		\node at (1,-1.5) {\small  Whitney};
		\node[draw=black, fill=white, circle, scale=.	2] at (-0.25,0) {};
		\node at (1.5, 1.75) {$\mathbb C\setminus \{z_1z_2=1\}$};
\end{scope}

	\begin{scope}[shift={(2,-4.5)}]
		\fill[gray!20]   (2,2) rectangle (-2,-2);
		\node at (0.5,0) {$\times$};
		\node[right, teal] at (.5,1) {$\ell_\downarrow\#\ell_\uparrow$};
		\draw[teal] (0.5,1) .. controls (0.5,0.5) and (0,0.5) .. (0,0) .. controls (0,-0.5) and (0.5,-0.5) .. (0.5,-1);

		\node[right, brown] at (-1.5,0) {$\theta_H(\ell_c)$};
		\draw[brown] (0.5,-1) .. controls (0.5,-1.5) and (-0.5,-1.5) .. (-0.5,-1) .. controls (-0.5,-0.5) and (-1.5,-1) .. (-1.5,0) .. controls (-1.5,1) and (-0.5,0.5) .. (-0.5,1) .. controls (-0.5,1.5) and (0.5,1.5) .. (0.5,1);

		\node at (1,-1.5) {\small Chekanov};
		\node[draw=black, fill=white, circle, scale=.2] at (-0.25,0) {};
		\node at (1.5, 1.75) {$\mathbb C\setminus \{z_1z_2=1\}$};
	\end{scope}

	\begin{scope}[shift={(-2.5,-4.5)}]
		\fill[gray!20]  (2,2) rectangle (-2,-2);
		\node at (0.5,0) {$\times$};
		\node[right, orange] at (0.5,1) {$\ell_\uparrow\#\ell_\downarrow$};
		\draw[orange] (0.5,1) .. controls (0.5,0.5) and (1,0.5) .. (1,0) .. controls (1,-0.5) and (0.5,-0.5) .. (0.5,-1);
		\node[left, purple] at (-0.5,0) {$\ell_c$};
		\draw[purple] (0.5,1) .. controls (0.5,1.5) and (-0.5,1.5) .. (-0.5,1) .. controls (-0.5,0.5) and (-0.5,-0.5) .. (-0.5,-1) .. controls (-0.5,-1.5) and (0.5,-1.5) .. (0.5,-1);

		\node at (1,-1.5) {\small  Product};
		\node[draw=black, fill=white, circle, scale=.2] at (-0.25,0) {};
		\node at (1.5, 1.75) {$\mathbb C\setminus \{z_1z_2=1\}$};
	\end{scope}

\end{tikzpicture}
 }
	\caption{All of the paths in the Lefschetz fibration needed to assemble the tori and cobordism}
	\label{fig:cobordismpieces}
\end{figure}

Ideally at this point we would glue the cobordism $L_c\times \RR$ to the concatenation  $K_r\circ K_l$ to obtain a cobordism between the Chekanov and product torus. 
However the Lagrangian $L_c\times \RR$ does not glue to the cobordism  $K_r\circ K_l$, as the boundary of  $K_r\circ K_l$ corresponding to the ends of the thimbles $L^{\uparrow/\downarrow}$ move around the parameter space of the cobordism along curves dictated by the gluing profile.
We therefore modify $L_c\times \RR$ by a Hamiltonian isotopy so these pieces overlap and we can glue together. 

To achieve this, we need a more precise description of the construction of the Lagrangian surgery trace cobordism.
The surgery trace cobordism is obtained by performing the surgery in one dimension higher and stretching the neck of the surgery (see section 6.1 of \cite{biran2013lagrangian} ).
We will build our cobordism between Chekanov and product tori by performing surgery on an immersed Lagrangian submanifold. 
This immersed Lagrangian submanifold is the suspension of an exact homotopy $\phi_t:L_{S_2}\times \RR \to X$
which fixes the points of self-intersection.
However, the primitive $H_t: L_{S^2}\times \RR\to \RR$  for $\iota_{\frac{d\phi_t}{dt}}\omega |_{L_{S_2}}$ which is used to construct the suspension cobordism will take different values at the self-intersection points of the Whitney sphere, so that the Lagrangian suspension is embedded at all $t_0$ where $H_{t_0}:L\times \{t_0\}\to \RR$ is non-constant.
As we will later want to compute the flux of this exact homotopy, we construct this suspension of the exact homotopy using the pieces drawn in \cref{fig:catseyepaths}.
Take paths $\gamma_{\uparrow/\downarrow}(t): \RR\to \CC$ as drawn in \cref{fig:catseyepaths} and consider the cylindrical Lagrangian cobordisms $L_\downarrow\times \gamma_\downarrow$ and $L_\uparrow\times \gamma_\uparrow$. 
Choose a parameterization of these paths so that $\gamma_{\uparrow/\downarrow}(t) = t+\jmath f_{\uparrow/\downarrow}(t)$ (see \cref{fig:catseyepaths} (a)).
By taking a translation of these paths, we assume that $f_{\uparrow/\downarrow} (0)=0$ and $f^{\pm}(t_0)=0$.
\begin{figure}
	\centering
	\scalebox{.75}{\begin{tikzpicture}
	\begin{scope}[shift={(-0.5,9.5)}]
		\node[above] at (1,0) {$ L_\uparrow\times \gamma_\uparrow$};
		\node[below] at (1,-1) {$L_\downarrow\times \gamma_\downarrow$};
		\draw  (3.5,-2) rectangle (-1.5,1.5);
		\draw (-1,0.5) .. controls (-0.5,0) and (0,-0.5) .. (0.25,-0.75) .. controls (0.5,-1) and (1.5,-1) .. (1.75,-0.75) .. controls (2,-0.5) and (2.5,0) .. (3,0.5);
		\draw (-1,-1.5) .. controls (-0.5,-1) and (0,-0.5) .. (0.25,-0.25) .. controls (0.5,0) and (1.5,0) .. (1.75,-0.25) .. controls (2,-0.5) and (2.5,-1) .. (3,-1.5);
	\end{scope}

	\begin{scope}[shift={(-0.5,5)}]
		\node[above] at (1,-0.5) {$L_c\times \mathbb{R}$};
		\draw  (3.5,-2.5) rectangle (-1.5,1);
		\draw (-1,-1) -- (3,-1);
	\end{scope}

	\begin{scope}[shift={(5.5,5)}]
		\draw  (3.5,-2.5) rectangle (-1.5,1);
		\begin{scope}[shift={(0,-0.5)}]
			\clip (-1,0.5) .. controls (-0.5,0) and (0,-0.5) .. (0.25,-0.75) .. controls (0.5,-1) and (1.5,-1) .. (1.75,-0.75) .. controls (2,-0.5) and (2.5,0) .. (3,0.5);
			\clip (-1,-1.5) .. controls (-0.5,-1) and (0,-0.5) .. (0.25,-0.25) .. controls (0.5,0) and (1.5,0) .. (1.75,-0.25) .. controls (2,-0.5) and (2.5,-1) .. (3,-1.5);
			\fill[fill=gray!20]  (0,0) rectangle (2,-1);
		\end{scope}
		\fill[fill=gray!20] (-1,0) -- (0,-1) -- (-1,-2) -- cycle;
		\fill[fill=gray!20] (3,0) -- (3,-2) -- (2,-1) -- cycle;
		\draw[dashed] (-1,0) .. controls (-0.5,-0.5) and (0,-1) .. (0.25,-1.25) .. controls (0.5,-1.5) and (1.5,-1.5) .. (1.75,-1.25) .. controls (2,-1) and (2.5,-0.5) .. (3,0);
		\draw[dashed] (-1,-2) .. controls (-0.5,-1.5) and (0,-1) .. (0.25,-0.75) .. controls (0.5,-0.5) and (1.5,-0.5) .. (1.75,-0.75) .. controls (2,-1) and (2.5,-1.5) .. (3,-2);		\node[above] at (1,-0.5) {$ K_c$};
	\end{scope}

	\begin{scope}[shift={(5.5,9.5)}]
		\draw  (3.5,-2) rectangle (-1.5,1.5);
		\begin{scope}[]
			\clip (-1,0.5) .. controls (-0.5,0) and (0,-0.5) .. (0.25,-0.75) .. controls (0.5,-1) and (1.5,-1) .. (1.75,-0.75) .. controls (2,-0.5) and (2.5,0) .. (3,0.5);
			\clip (-1,-1.5) .. controls (-0.5,-1) and (0,-0.5) .. (0.25,-0.25) .. controls (0.5,0) and (1.5,0) .. (1.75,-0.25) .. controls (2,-0.5) and (2.5,-1) .. (3,-1.5);

			\fill[fill=gray!20]  (0,0) rectangle (2,-1);
		\end{scope}
		\fill[fill=gray!20] (-1,0.5) -- (0,-0.5) -- (-1,-1.5) -- cycle;
		\fill[fill=gray!20] (3,0.5) -- (3,-1.5) -- (2,-0.5) -- cycle;

		\draw (-1,0.5) .. controls (-0.5,0) and (0,-0.5) .. (0.25,-0.75) .. controls (0.5,-1) and (1.5,-1) .. (1.75,-0.75) .. controls (2,-0.5) and (2.5,0) .. (3,0.5);
		\draw (-1,-1.5) .. controls (-0.5,-1) and (0,-0.5) .. (0.25,-0.25) .. controls (0.5,0) and (1.5,0) .. (1.75,-0.25) .. controls (2,-0.5) and (2.5,-1) .. (3,-1.5);
		\node[above] at (1,0) {$ K_c\cup( L_{\uparrow/\downarrow}\times \gamma_{\uparrow/\downarrow}) $};
	\end{scope}

	\begin{scope}[shift={(11.5,9.5)}]
		\draw  (3.5,-2) rectangle (-1.5,1.5);
		\begin{scope}[]
			\clip (-1,0.5) .. controls (-0.5,0) and (0,-0.5) .. (0.25,-0.75) .. controls (0.5,-1) and (1.5,-1) .. (1.75,-0.75) .. controls (2,-0.5) and (2.5,0) .. (3,0.5);
			\clip (-1,-1.5) .. controls (-0.5,-1) and (0,-0.5) .. (0.25,-0.25) .. controls (0.5,0) and (1.5,0) .. (1.75,-0.25) .. controls (2,-0.5) and (2.5,-1) .. (3,-1.5);

			\fill[fill=gray!20]  (0,0) rectangle (2,-1);
		\end{scope}
		\fill[fill=gray!20] (-1,0.5) -- (0,-0.5) -- (-1,-1.5) -- cycle;
		\fill[fill=gray!20] (3,0.5) -- (3,-1.5) -- (2,-0.5) -- cycle;

		\draw (-1,0.5) .. controls (-0.5,0) and (0,-0.5) .. (0.25,-0.75) .. controls (0.5,-1) and (1.5,-1) .. (1.75,-0.75) .. controls (2,-0.5) and (2.5,0) .. (3,0.5);
		\draw (-1,-1.5) .. controls (-0.5,-1) and (0,-0.5) .. (0.25,-0.25) .. controls (0.5,0) and (1.5,0) .. (1.75,-0.25) .. controls (2,-0.5) and (2.5,-1) .. (3,-1.5);
		\node[above] at (1,0) {$ K^{pre}$};
		\draw (-0.25,-0.25) .. controls (-0.125,-0.375) and (-0.125,-0.625) .. (-0.25,-0.75);
		\draw (0.25,-0.75) .. controls (0.125,-0.625) and (0.125,-0.375) .. (0.25,-0.25);
	\end{scope}
	\begin{scope}[shift={(11.5,5)}]
		\draw  (3.5,-2.5) rectangle (-1.5,1);
		\draw[fill=gray!20] (-1,-1) .. controls (-0.5,-1) and (-0.5,-1) .. (-0.5,-1) .. controls (0,-1) and (0,-1) .. (0.25,-1.25) .. controls (0.5,-1.5) and (1.5,-1.5) .. (1.75,-1.25) .. controls (2,-1) and (2,-1) .. (2.5,-1) .. controls (3,-1) and (2.5,-1) .. (3,-1);
		\draw[fill=gray!20] (-1,-1) .. controls (-0.5,-1) and (-0.5,-1) .. (-0.5,-1) .. controls (0,-1) and (0,-1) .. (0.25,-0.75) .. controls (0.5,-0.5) and (1.5,-0.5) .. (1.75,-0.75) .. controls (2,-1) and (2,-1) .. (2.5,-1) .. controls (2.5,-1) and (2.5,-1) .. (3,-1);
		\node[above] at (1,-0.5) {$ K $};
		\node[scale=.7] at (-0.5,-1.5) {$L_\uparrow\# L_\downarrow\cup L_c$};
		\node[scale=.7] at (2.5,-1.5) {$L_\downarrow\# L_\uparrow\cup \theta_{H}(L_c)$};
		\draw (0.25,-1.25) .. controls (0.125,-1.125) and (0.125,-0.875) .. (0.25,-0.75);
		\draw (1.75,-0.75) .. controls (1.875,-0.875) and (1.875,-1.125) .. (1.75,-1.25);
	\end{scope}

\draw[->] (3,4.5) -- (4,4.5);
	\draw[->] (3,9) -- (4,9) ;
	\draw[->] (6.5,6) -- (6.5,7.5) ;
	\draw[->] (9,9) -- (10,9) ;
	\draw[->] (12.5,7.5) -- (12.5,6);
	\draw (13.25,9.25) .. controls (13.375,9.125) and (13.375,8.875) .. (13.25,8.75);
	\draw (13.75,9.25) .. controls (13.625,9.125) and (13.625,8.875) .. (13.75,8.75);
\node at (2.5,10.5) {(a)};
\node at (2.5,5.5) {(d)};
\node at (8.5,5.5) {(e)};
\node at (8.5,10.5) {(b)};
\node at (14.5,10.5) {(c)};
\node at (14.5,5.5) {(f)};
\draw[dotted] (-0.5,7.5) -- (-0.5,11) (1.5,7.5) --(1.5,11);
\node at (-0.5,11.5) {$t=0$};
\node at (1.5,11.5) {$t=t_0$};
\end{tikzpicture}
 }
	\caption{Steps to build our Lagrangian cobordism by projection to the cobordism $\CC$ parameter of  $X\times \CC$. }
	\label{fig:catseyepaths}
	\end{figure}
We will now glue to ($L_\downarrow\times \gamma_\downarrow)\sqcup (L_\uparrow\times \gamma_\uparrow)$ a Lagrangian which is topologically $L_c\times \RR$.
This cobordism will be the suspension of a time dependent Hamiltonian isotopy of $L_c$. 
We pick a Hamiltonian $H_t: L_c\times \RR\to \RR$ which satisfies the following properties:
\begin{itemize}
	\item The value of $H_t$ on $L_c \times \RR = [-S,S] \times S^1 \times \RR$ is pulled back from $[-S,S]$ and is increasing in $s$.
		\item There exists a constant $\epsilon$ so that the restriction of
	      \begin{align*}
		      H_t(L_c|_{s\in [-S, -S+\epsilon)})=  f_\downarrow(t) &&
		      H_t(L_c|_{s\in [S, S-\epsilon)})=    f_\uparrow(t)
	      \end{align*}
\end{itemize}
We define the cobordism $K_c$ to be the suspension of this Hamiltonian isotopy (see \cref{fig:catseyepaths} (d, e).
 By design, the Hamiltonian flow $X_{H^t}$ is constant on the region $[\pm S, \pm S \mp \epsilon)\times S^1$ of $L_c$.
Therefore the portion of $K_c$ corresponding to the $[\pm S, \pm S \mp \epsilon)\times S^1\times \RR$ is exactly $L_c|_{s\in [\pm S, \pm S \mp \epsilon)}\times \gamma_{\uparrow/\downarrow}$.
See \cref{fig:cobordismpieces,fig:catseyepaths}.
\begin{claim}
	The union $K_c\cup (L_{\uparrow} \times \gamma_{\uparrow})\cup(L_\downarrow\times \gamma_{\downarrow})$ is a smooth Lagrangian submanifold with 2 transverse self-intersections (\cref{fig:catseyepaths} (b)). 
	\label{claim:cobordismconstruction}
\end{claim}
We now construct the mutation cobordism from $K_c\cup (L_{\uparrow} \times \gamma_{\uparrow})\cup(L_\downarrow\times \gamma_{\downarrow})$.
First, we apply a Lagrangian surgery at both of the self-intersection points to obtain the Lagrangian submanifold $K^{pre}$ (\cref{fig:catseyepaths} (c)), which is an embedded Lagrangian submanifold.
The slices of this Lagrangian cobordism at times $0$ and $t_0$ are
\begin{align*}
	K^{pre}|_{0}= (L_\uparrow\#L_\downarrow\cup L_c) && K^{pre}|_{t_0}=( L_\downarrow\#L_\uparrow)\cup\theta_H( L_c).
\end{align*}
These are product and Chekanov tori respectively. 

We construct the \emph{mutation cobordism} $K_{\mu_D}$ (\cref{fig:catseyepaths}) by taking the portion of $K^{pre}$ between $0$ and $t_0$ whose projection to the real component lies in $[0, t_0]$.
To obtain a Lagrangian cobordism from this space, we take this portion and stretch it out to obtain a Lagrangian cobordism with ends. This stretching process is described in \cite[Section 3.1.1]{hicks2021lagrangian}, where it is called the truncation of $K^{pre}$ \cite[Definition 3.1.5]{hicks2021lagrangian}. In the notation of that paper, we write:
\[K_{\mu_D}:=K^{pre}\|_{[0, t_0]}.\]
\begin{remark}
	The construction of this Lagrangian cobordism can also be done by locally identifying the Lagrangians $L_\uparrow\times\gamma_\uparrow$ and $L_\downarrow\times \gamma_\downarrow$ with $\RR^2\times \RR, \jmath\RR^2\times \jmath\RR\subset \CC^2\times \CC=\CC^{2+1}$, then following the construction of the Lagrangian surgery trace in \cite[Lemma 6.1.1]{biran2013lagrangian}. The cutting and cylindrical neck attachment construction there is similar to the truncation procedure from \cite{hicks2021lagrangian}, and produces a Lagrangian isotopic submanifold.
\end{remark}

 	\subsection{General mutations}
	\label{subsec:generalmutations}
	In this section we look at a generalization of the above construction.
\label{sec:mutation}
\begin{theorem}[Haug, \cite{haug2015lagrangian}]
	Suppose that $D$ is a Lagrangian disk with boundary cleanly contained within an oriented Lagrangian submanifold $L$. 
	There exists an immersed Lagrangian $\alpha_D(L)\looparrowright X$ called the \emph{Lagrangian anti-surgery\footnote{The character $\alpha$ is chosen for anti-surgery as it looks like an immersed Lagrangian.} of $L$ along $D$}, which satisfies the following properties:	\begin{itemize}
		\item
		      $\alpha_D(L)$ is topologically obtained from $L$ by performing surgery along $\partial D$
		\item
		      $\alpha_D(L)$ agrees with $L$ outside of a small neighborhood of $D$
			  		\item
		      If $L$ was embedded and disjoint from the interior of $D$, then $\alpha_D(L)$ has a single self intersection point.
	\end{itemize}
\end{theorem}
Anti-surgery is inverse to Lagrangian surgery in the sense that if $L'$ is obtained from $L$ by resolving a self-intersection point, there exists an anti-surgery disk on $L'$ so that $\alpha_D (L')=L$. 
However, the choices in both surgery and anti-surgery mean that the process of applying anti-surgery followed by surgery need not construct the same Lagrangian. By combining anti-surgery with surgery, we can obtain a new embedded Lagrangian.
We now restrict to complex dimension two.
\begin{definition}[\cite{pascaleff2017wall}]
		Let $L$ be an embedded Lagrangian and $D$ a surgery disk. Let $\alpha_D(L)$ be obtained from $D$ by anti-surgery.
	A \emph{mutation of $L$ along $D$} is the Lagrangian $\mu_D^\epsilon(L)$  obtained from $\alpha_D(L)$ by resolving the resulting single self-intersection point in opposite direction. 
	The width $\epsilon$ of the mutation is the sum of the fluxes swept from $L$ to $\alpha_D( L)$ to $\mu_D^\epsilon( L)$.
	\label{def:mutation}
\end{definition}
An example of Lagrangians obtained by mutation are the monotone Chekanov and product tori in $\CC^2\setminus \{z_1z_2=1\}$, where surgery occurs along the disk $z_1=\bar z_2$.
It is expected that Lagrangians which are related by mutation give different charts on the moduli space of Lagrangian submanifolds in the Fukaya category and that these charts are related by a wall-crossing formula tied to cluster transformation \cite{pascaleff2017wall}.\footnote{We note here that our definition of mutation differs slightly from that in \cite{pascaleff2017wall} as in our setting there is some symplectic flux swept between $L, \alpha_D(L)$ and $\mu_D^\epsilon(L)$. Usually, one requires that the amount of symplectic flux between these Lagrangians is zero. This would be necessary to preserve monotonicity of a Lagrangian submanifold.}
Just as there exists a Lagrangian cobordism between the Chekanov and product tori in $\CC^2$, there exist Lagrangian cobordisms between Lagrangians related by mutation.
\begin{theorem}[Haug, \cite{haug2015lagrangian}]
	Let $L$ be a Lagrangian and let $D$ be an anti-surgery disk for $L$.
	There exists a Lagrangian cobordism between $L$ and $\mu_D^\epsilon(L)$.
\end{theorem}
\begin{definition}[Pascaleff and Tonkonog,\cite{pascaleff2017wall}]
	A \emph{mutation configuration} is a pair $(L, D)$, where $D$ is an anti-surgery disk for $L$ with boundary in a primitive homology class. The \emph{mutation cobordism} is the cobordism $K_{\mu_D}$ with ends on $L$ and $\mu^\epsilon_D(L)$.
	We call $[\partial D^2]\subset H_1(L, \ZZ)$ the mutation class or mutation direction.
	\label{def:mutationcobordism}
\end{definition}
A key feature of Lagrangian mutation is that the Lagrangians $L$ and $\mu^\epsilon_D(L)$ are Lagrangian isotopic.

We will show that the Lagrangian cobordism $K_{\mu_D}$ can induce a continuation map between the Lagrangians $L$ and $\mu^\epsilon_D L$.
The proof attempts to follow the toy model of the suspension cobordism considered in \cref{sec:continuation}. 
However, the mutation cobordism is not topologically a cylinder so we cannot simply copy our proof from \cref{sec:continuation}. 
To overcome this difference, we will prove the following replacements for cylindricity of the cobordism $K_{\mu_D}$:
\begin{itemize}
	\item We first describe the Morse theory of $K_{\mu_D}$ in \cref{subsub:Morsefunction}.
	\item
	\Cref{subsub:exceptionalholomorphicdisk} constructs a holomorphic disk with boundary on this cobordism and \cref{subsub:removingcurvature} shows that the $m^0$ contribution of this disk can be deformed away to higher filtration.
	\item
	We show that the restriction of the differential to the right end of the cobordism can be inverted in \cref{subsub:invertingends}, allowing us to apply a theorem on mapping cocylinders for $A_\infty$ algebras in \cref{subsub:applinghtt}. 
\end{itemize}
This will give us enough leverage to construct a mapping cocylinder from the Lagrangian cobordism $K_{\mu_D}$ and prove the following wall-crossing theorem:

\begin{theorem}
	Suppose that $(L, D)$ is an isolated mutation configuration (\cref{cond:isolatedmutation}).
	Then there exists a deforming cochain $d_\epsilon$ for $\CF(K_{\mu_D})$ so that $\CF_{d_\epsilon}(K_{\mu_D})$ is an $A_\infty$ mapping cocylinder between $\CF_{\beta^-_*(d_\epsilon)}(L^-)$ and $\CF_{\beta^+_*(d_\epsilon)}(L^+)$.  \label{thm:wallcrossingcobordism}
\end{theorem}
\Cref{cond:isolatedmutation} is required to rule out the possible appearance of holomorphic disks 
with boundary on $K_{\mu_D}$ which may appear during the ``suspension'' portion of the cobordism $K_{\mu_D}$. 
\begin{definition}
	Let $\epsilon = \omega(u_{ex})$.
	We say that the mutation $\mu_D$ is isolated if the only holomorphic disk $u$ with boundary on $K_{\mu_D}$ and $\langle[ \partial u], x^\pm \rangle\neq 0$ and $\omega(u)\leq \omega(u_{ex})$ is $u_{ex}$.
	\label{cond:isolatedmutation}
\end{definition}
	We anticipate that it should be possible to decompose a mutation cobordism into a concatenation of suspensions and mutation cobordisms so that the mutation is isolated. The cobordism decomposition framework studied in \cite{hicks2021lagrangian} should provide this decomposition in general.
	We will show that the prototypical example of a mutation cobordism between Chekanov and Product type torus  studied in \cref{app:mutationexamples}  is isolated.
\begin{notation}
	We will say deforming cochain to denote an element $d\in \CF(K_{\mu_D})$ which we use to deform the $A_\infty$ algebra (as in \cite[Definition 3.6.9]{fukaya2010lagrangian}) but which is not necessarily a bounding cochain (i.e. $m^0_{d}$ may or may not be  $0$). Usually we will employ these to deform the curvature in such a way that $\val(m^0_{d})$ larger than some specified amount.
\end{notation}
\subsection{Geometric input}
We will simplify our notation substantially by restricting to the setting where $L=T^2$. 
The Floer cochain complex of $K_{\mu_D}$ is dependent on the data of a Morse function.
By picking an appropriate Morse function, we can simplify the later discussion.
\begin{notation} In this section, let $L^-=L, L^+=\mu_D(L)$.
$K_{\mu_D}$ is constructed from $L^-$ by attachment of a 1 cell and a 0 cell.
However, there is a slightly more symmetric treatment to $K_{\mu_D}$.
Consider an intermediate Lagrangian $L^0$, which is obtained by performing anti-surgery on $L^-$.
This is an immersed Lagrangian 2-sphere.
Both $L^-$ and $L^+$ may be obtained from $L^0$ via attachment of a 1-cell.
By abuse of notation, we will also use $L^-, L^+$ and $L^0$ to refer to the submanifolds of $K_{\mu_D}$ which (upon inclusion into $X\times \CC$ and projection to $X$) correspond to the slices $L^-, L^+$ and $L^0$.
\end{notation}
\subsubsection{ Morse theory for $K_{\mu_D}$}
\label{subsub:Morsefunction}
Consider a Morse-Bott function $\tilde h: K_{\mu_D}\to \RR$ which has a minimum along $L^0$, maximums at the ends $L^\pm$ and two index 2 critical points corresponding to the attached 1-cells (\cref{fig:Morsebottcobordism}.)

We now give a decomposition of the Morse cochain complex, with the goal of understanding where $\CM(K_{\mu_D})$ fails to be a mapping cocylinder. 
The Morse complex  $\CM(K_{\mu_D})$ has the following shape:
\[
	\CM(K_{\mu_D})\;\;= \left(  \setlength\mathsurround{0pt}\begin{tikzcd}[column sep={1em}]
			\CM(L^-) \arrow{dr} \arrow{drr}  & & &  & \CM(L^+) \arrow{dl} \arrow{dll}\\
			& x^- \arrow{r}&  \CM(L^0)[1] & x^+ \arrow{l}
		\end{tikzcd}\setlength\mathsurround{.8pt}\right)
\]
where $x^-$ and $x^+$ are the critical points corresponding to the handles. 
We will call the subspace spanned by $x^-$ and $x^+ $
\[
	E^0:=\Lambda\langle x^-, x^+\rangle.
\]
\begin{figure}
	\centering
	\begin{tikzpicture}[scale=.4]

	\begin{scope}[shift={(-16,-4.5)}, scale=4]
		\fill[gray!20]  (4,-2) rectangle (-2,0);
		\draw[fill=gray!60] (-2,-1) .. controls (-0.5,-1) and (-0.5,-1) .. (-0.5,-1) .. controls (0,-1) and (0,-1) .. (0.25,-1.25) .. controls (0.5,-1.5) and (1.5,-1.5) .. (1.75,-1.25) .. controls (2,-1) and (2,-1) .. (2.5,-1) .. controls (3,-1) and (2.5,-1) .. (3,-1);
		\draw[fill=gray!60] (-2,-1) .. controls (-0.5,-1) and (-0.5,-1) .. (-0.5,-1) .. controls (0,-1) and (0,-1) .. (0.25,-0.75) .. controls (0.5,-0.5) and (1.5,-0.5) .. (1.75,-0.75) .. controls (2,-1) and (2,-1) .. (2.5,-1) .. controls (2.5,-1) and (2.5,-1) .. (4,-1);
		\node[above] at (-1.0833,-1.4169) {$ K $};

		\draw (0.25,-1.25) .. controls (0.125,-1.125) and (0.125,-0.875) .. (0.25,-0.75);
		\draw (1.75,-0.75) .. controls (1.875,-0.875) and (1.875,-1.125) .. (1.75,-1.25);
	\end{scope}

\begin{scope}[shift={(-1,0)}]

\draw (-1.5,0) .. controls (-1,0) and (-1,-1.5) .. (-1,-2);
\draw (0.5,0) .. controls (0,0) and (0,-1.5) .. (0,-2);
\draw[fill=white]  (-0.5,0.5) ellipse (4 and 2);

\begin{scope}[]
\fill[white]  (-2.5,0.5) ellipse (0.5 and 0.5);
\fill[white] (-0.5,3) .. controls (-0.5,3.5) and (0,4) .. (0.5,4) .. controls (1,4) and (3,4) .. (3.5,4) .. controls (4,4) and (4.5,3.5) .. (4.5,3) .. controls (4.5,2.5) and (4.5,-1.5) .. (4.5,-2) .. controls (4.5,-2.5) and (4,-3) .. (3.5,-3) .. controls (3,-3) and (1,-3) .. (0.5,-3) .. controls (0,-3) and (-0.5,-2.5) .. (-0.5,-2) .. controls (-0.5,-1.5) and (-0.5,-1) .. (-0.5,3);

\draw  (-0.5,0.5) ellipse (4 and 2);
\end{scope}
\begin{scope}[shift={(0,0)}]

\draw[dotted]  (-2.5,0.5) ellipse (0.25 and 0.25);
\draw[dotted]  (1.5,0.5) ellipse (0.25 and 0.25);
\draw[dotted]  (-0.5,0.5) ellipse (1.75 and 0.65);
\draw[dotted]  (-0.5,0.5) ellipse (2.25 and 0.8);
\end{scope}

\fill[white] (-1,3) .. controls (-1,2.5) and (-1,1) .. (-1.5,1) .. controls (-1,1) and (0,1) .. (0.5,1) .. controls (0,1) and (0,2.5) .. (0,3);

\draw (-1.5,1) .. controls (-1,1) and (-1,2.5) .. (-1,3);
\draw (0.5,1) .. controls (0,1) and (0,2.5) .. (0,3);
\draw(-1,-2) .. controls (-1,-3) and (-0.5,-3.5) .. (0,-3.5) .. controls (0.5,-3.5) and (3,-3.5) .. (3.5,-3.5) .. controls (4.5,-3.5) and (5,-3) .. (5,-2) .. controls (5,-1.5) and (5,2.5) .. (5,3) .. controls (5,4) and (4.5,4.5) .. (3.5,4.5) .. controls (3,4.5) and (0.5,4.5) .. (0,4.5) .. controls (-0.5,4.5) and (-1,4) .. (-1,3) ;
\draw (0,-1.5) .. controls (0,-2.25) and (0.25,-2.5) .. (0.5,-2.5) .. controls (1,-2.5) and (3,-2.5) .. (3.5,-2.5) .. controls (3.75,-2.5) and (4,-2.25) .. (4,-2) .. controls (4,-1.5) and (4,2.5) .. (4,3) .. controls (4,3.25) and (3.75,3.5) .. (3.5,3.5) .. controls (3,3.5) and (1,3.5) .. (0.5,3.5) .. controls (0.25,3.5) and (0,3.25) .. (0,3);

\end{scope}

\begin{scope}[shift={(-11.5,6)}]

\draw (-1.5,0) .. controls (-1,0) and (-0.85,-1.3) .. (-0.85,-2);
\draw (0.5,0) .. controls (0,0) and (-0.15,-1.3) .. (-0.15,-2);
\draw[fill=white]  (-0.5,0.5) ellipse (3.5 and 1.75);

\begin{scope}[]
\fill[white]  (-2.5,0.5) ellipse (0.5 and 0.5);
\fill[white] (-0.5,3) .. controls (-0.5,3.5) and (0,4) .. (0.5,4) .. controls (1,4) and (3,4) .. (3.5,4) .. controls (4,4) and (4.5,3.5) .. (4.5,3) .. controls (4.5,2.5) and (4.5,-1.5) .. (4.5,-2) .. controls (4.5,-2.5) and (4,-3) .. (3.5,-3) .. controls (3,-3) and (1,-3) .. (0.5,-3) .. controls (-0.15,-3) and (-0.5,-2.5) .. (-0.5,-2) .. controls (-0.5,-1.3) and (-0.5,-1) .. (-0.5,3);

\draw  (-0.5,0.5) ellipse (3.5 and 1.75);
\end{scope}
\begin{scope}[shift={(0,0)}]

\draw[dotted]  (-2.5,0.5) ellipse (0.25 and 0.25);
\draw[dotted]  (1.5,0.5) ellipse (0.25 and 0.25);
\draw[dotted]  (-0.5,0.5) ellipse (1.75 and 0.65);
\draw[dotted]  (-0.5,0.5) ellipse (2.25 and 0.8);
\end{scope}

\fill[white] (-0.85,3) .. controls (-0.85,2.5) and (-1,1) .. (-1.5,1) .. controls (-1,1) and (0,1) .. (0.5,1) .. controls (0,1) and (-0.15,2.5) .. (-0.15,3);

\draw (-1.5,1) .. controls (-1,1) and (-0.85,2.5) .. (-0.85,3);
\draw (0.5,1) .. controls (0,1) and (-0.15,2.5) .. (-0.15,3);
\draw(-0.85,-2) .. controls (-0.85,-3) and (-0.5,-3.35) .. (0,-3.35) .. controls (0.5,-3.35) and (3,-3.35) .. (3.5,-3.35) .. controls (4.5,-3.35) and (4.85,-3) .. (4.85,-2) .. controls (4.85,-1.5) and(4.85,0) .. (4.5,0.5) .. controls (4.85,1) and (4.85,2.5) .. (4.85,3) .. controls (4.85,4) and (4.5,4.35) .. (3.5,4.35) .. controls (3,4.35) and (0.5,4.35) .. (0,4.35) .. controls (-0.5,4.35) and (-0.85,4) .. (-0.85,3) ;
\draw (-0.15,-1.3) .. controls (-0.15,-2.25) and (-0.15,-2.65) .. (0.5,-2.65) .. controls (1,-2.65) and (3,-2.65) .. (3.5,-2.65) .. controls (3.75,-2.65) and (4.15,-2.25) .. (4.15,-2) .. controls (4.15,-1.5) and (4.15,0) .. (4.5,0.5) .. controls (4.15,1) and (4.15,2.5) .. (4.15,3) .. controls (4.15,3.5) and (3.75,3.65) .. (3.5,3.65) .. controls (3,3.65) and (1,3.65) .. (0.5,3.65) .. controls (-0.15,3.65) and (-0.15,3.25) .. (-0.15,3);

\end{scope}

\begin{scope}[shift={(-3.5,-1.5)}]

\begin{scope}[shift={(-11,0)}]

\draw[dotted]  (-2.5,0.5) ellipse (0.25 and 0.25);
\draw[dotted]  (1.5,0.5) ellipse (0.25 and 0.25);
\draw[dotted]  (-0.5,0.5) ellipse (1.75 and 0.65);
\draw[dotted]  (-0.5,0.5) ellipse (2.25 and 0.8);
\end{scope}

\draw  (-11.5,0.5) ellipse (3.5 and 1.5);
\end{scope}
\begin{scope}[shift={(-8,3.5)}]

\begin{scope}[shift={(-11,0)}]

\draw[dotted]  (-2.5,0.5) ellipse (0.25 and 0.25);
\draw[dotted]  (1.5,0.5) ellipse (0.25 and 0.25);
\draw[dotted]  (-0.5,0.5) ellipse (1.75 and 0.65);
\draw[dotted]  (-0.5,0.5) ellipse (2.25 and 0.8);
\end{scope}

\draw  (-11.5,0.5) ellipse (3 and 1.25);
\draw (-12,0.6) .. controls (-11.8,0.6) and (-11.6,0.6) .. (-11.5,0.5) .. controls (-11.4,0.6) and (-11.2,0.6) .. (-11,0.6);
\draw[dotted] (-11.7,0.3) .. controls (-11.6,0.3) and (-11.6,0.3) .. (-11.5,0.5) .. controls (-11.4,0.3) and (-11.4,0.3) .. (-11.3,0.3);
\end{scope}

\begin{scope}[shift={(-22.5,-0.5)}]
\draw[dotted]  (-2.5,0.5) ellipse (0.5 and 0.5);
\draw[dotted]  (1.5,0.5) ellipse (0.5 and 0.5);
\draw[]  (-0.5,0.5) ellipse (1.5 and 0.5);
\begin{scope}[]

\draw[dotted]  (-2.5,0.5) ellipse (0.25 and 0.25);
\draw[dotted]  (1.5,0.5) ellipse (0.25 and 0.25);
\draw[dotted]  (-0.5,0.5) ellipse (1.75 and 0.65);
\draw[dotted]  (-0.5,0.5) ellipse (2.25 and 0.8);
\end{scope}
\draw  (-0.5,0.5) ellipse (2.5 and 1);
\end{scope}

\node at (-23,1.5) {$CM^\bullet(L^-)$};
\node at (-15,1) {$CM^\bullet(L^0)$};

\node at (1,5) {$CM^\bullet(L^+)$};

\draw (-24,-21) .. controls (-22,-17) and (-22,-17) .. (-20,-17) .. controls (-18,-17) and (-14,-19) .. (-12,-19) .. controls (-10,-19) and (-6,-17) .. (-4,-17) .. controls (-2,-17) and (-2,-17) .. (0,-21);
\draw[<-]  (-26,-17) -- (-26,-23);
\draw[->] (-26,-23) -- (2,-23);
\node at (-27,-17) {$\tilde h$};
\node at (1,-22.5) {$t$};

\draw[dotted, ->] (-23,-1.5) .. controls (-23,-2.5) and (-23,-4.5) .. (-23,-5) .. controls (-23,-5.5) and (-23,-6) .. (-21.5,-6) .. controls (-20,-6) and (-20,-6.5) .. (-20,-7) .. controls (-20,-7.5) and (-20,-8) .. (-20,-8.5);
\draw[dotted, ->] (-19.5,2.5) .. controls (-19.5,1.5) and (-19.5,-2) .. (-19.5,-2.5) .. controls (-19.5,-3.5) and (-19.5,-4) .. (-17.5,-4) .. controls (-15.5,-4) and (-15.5,-4.5) .. (-15.5,-5.5) .. controls (-15.5,-6) and (-15.5,-7) .. (-15.5,-7.5);
\draw[dotted, ->] (-14,-3) .. controls (-14,-3.5) and (-14,-4) .. (-14,-4.5) .. controls (-14,-5) and (-14,-5.5) .. (-13,-5.5) .. controls (-12,-5.5) and (-12,-6) .. (-12,-7) .. controls (-12,-7) and (-12,-8) .. (-12,-8.5);
\draw[dotted, ->] (-10.5,2) .. controls (-10.5,1) and (-10.5,0.5) .. (-10.5,-0.5) .. controls (-10.5,-1) and (-10.5,-1.5) .. (-9.5,-1.5) .. controls (-8.5,-1.5) and (-8.5,-2) .. (-8.5,-2.5) .. controls (-8.5,-3) and (-8.5,-6) .. (-8.5,-7);
\draw[dotted, ->] (-3,-2) .. controls (-3,-2.5) and (-3,-4.5) .. (-3,-5) .. controls (-3,-5.5) and (-3,-6) .. (-3.5,-6) .. controls (-4,-6) and (-4,-6.5) .. (-4,-7) .. controls (-4,-7.5) and (-4,-8) .. (-4,-8.5);
\node at (-20,6) {$\langle x^-\rangle$};
\node at (-10,11) {$\langle x^+\rangle$};
\node at (-2,-11) {$\mathbb C$};
\end{tikzpicture} 	\caption{
	We draw the sublevel sets of the Morse-Bott function for $K_{\mu_D}$ at the critical values of $\tilde h$.
	Notice that $L^0$ separates the cobordism into two halves. 	}
	\label{fig:Morsebottcobordism}
\end{figure}
\begin{claim}
	The spaces spanned by 
	\begin{align*}
		\CF(L^-)\oplus \CF(L^0)\oplus E^0\\
		\CF(L^+)\oplus \CF(L^0)\oplus E^0		
	\end{align*}
	are filtered $A_\infty$ ideals of $\CF(K_{\mu_D})$.
\end{claim}
\begin{proof}
	These are the kernels of $\beta^\pm: \CF(K_{\mu_D})\to \CF(L^\pm)$, which by \cref{assum:pearlycompatibility} are $A_\infty$ homomorphisms. 
\end{proof}
\begin{notation}
	For this section, whenever we write an $A_\infty$ product map, it will always mean the $A_\infty$ product structure on $\CF(K_{\mu_D})$. 
\end{notation}
Setting $A^\pm=\CM(L^\pm)$ and $A^0=\CM(L^0)\oplus E^0$, we can write the differential on  $\CM(K_{\mu_D})=A^-\oplus A^0\oplus A^+$ as 
\[
	\underline m^1=\begin{pmatrix}
		(\underline{m})^{-;-} & 0         & 0            \\
		(\underline{m})^{-;0} & (\underline{m})^{0;0} & (\underline{m})^{+;0}\\
		0         		 & 0         & (\underline{m})^{+;+}
	\end{pmatrix}.
\]
where $(\underline{m})^{\pm;\pm}=m^1_{\CM(L^\pm)}$ are the Morse differentials.  
If the ideal $A^+\to A^0$ was null-homotopic --- for example if $(\underline{m})^{+;0}$ was an isomorphism --- then  $\CM(K_{\mu_D})$ would be a mapping cocylinder. We will further decompose this cochain complex by identifying subspaces of $\CM(L^\pm)$ which correspond to the mutation direction and tease out exactly how our complex fails to be a mapping cocylinder. 

The kernel of  $(\underline{m})^{+;0}$ is one dimensional. We pick an element $c^{u+}$ which generates the kernel so that we may write: 
\[
	\ker((\underline{m})^{+;0})=\Lambda\cdot \langle  c^{u+}\rangle. 
\]
Let us now restrict to the 1-cochains $E^\pm:=CM^1(L^\pm)$.Pick elements $c^{w\pm}\in E^\pm$ so that $(\underline{m})^{\pm ;0}(c^{w^\pm})=x^\pm$. Then the classes $c^{w\pm}$ and $c^{u\pm}$ now span the $E^\pm$.
\begin{remark} 
	The class $c^{u+}$ is determined completely by the cobordism as the Morse cohomology class whose downward flow space is the surgery disk.
	However, the selection of $c^{w+}$ requires a choice similar to the choices required to produce local coordinates on the moduli space of Lagrangian tori in \cref{sec:examples}.
\label{rem:choiceofsplitting}
\end{remark}
With this description, the non-cylindricity of $\CM(K_{\mu_D})$ exactly corresponds to the non-surjectivity of the map $(\underline{m})^{+;0}$ in the diagram:
\[E^-\xrightarrow{(\underline{m})^{-;0}} E^0\xleftarrow{(\underline{m})^{+;0}} E^+.\]
In the chosen basis $\{c^{u\pm},c^{w\pm}\} $ for $E^\pm$  and $\{x^-, x^+\}$ for $E_0$, the maps  $(\underline{m})^{\pm ;0}$ are 
\begin{align*}
(\underline{m})^{-;0}=\begin{pmatrix} 0& 1\\0 & 0\end{pmatrix} &  & (\underline{m})^{+;0}=\begin{pmatrix} 0 & 0 \\0&1
	\end{pmatrix}
\end{align*}
\begin{remark}
	In the more general case where $L$ is not a torus, identify $E^\pm$ as preimage of the above defined map $m^{\pm;0}:\CM(L^\pm)\to E^0$.
	This will include the kernel of $m^{\pm}$.
	A choice of basis $\{c^{u\pm},c^{w\pm}\} $ for $E^\pm$ and a splitting $H^1(L^\pm)=H^1(L)^\pm/E^\pm \oplus E^\pm$ will be required to construct a bounding cochain at a later step (see \cref{eq:choiceofsplittingII}).
		The portion which is independent of the handle, $H^1(L^\pm)/E^\pm$, is isomorphic to $H^1(L^0)$. 
	This corresponds to the ``cylindrical'' portion of the Morse theory of $K$, on which  $m^{+;0}$ is invertible at filtration zero.

	The construction of the deforming cochain $d_\epsilon$ in \cref{thm:wallcrossingcobordism} depends on these choices. 
	Taking different choices will yield different deforming cochains $d_\epsilon$, and thus identify $L^-$ and $L^+$ with a different choice of bounding cochain $\beta^\pm_*(d_\epsilon)$.
	We expect that looking at all possible choices in \cref{eq:choiceofsplittingII} identifies a locus of the Maurer-Cartan solutions $\mathcal MC(L^-)$ with a locus in $\mathcal MC(L^+)$, corresponding to the choices of bounding cochains which make $L^-$ and $L^+$ isomorphic objects of the Fukaya category.
	From a mirror symmetry perspective, when $L^-$ and $L^+$ are SYZ fibers we propose this locus is mirror to the intersection between the torus charts constructed around the mirror points to $L^-$ and $L^+$.

	A different viewpoint is  that the $A_\infty$ homomorphism constructed by $\CF_{d_\epsilon}(K_{\mu_D})$ between $\CF_{\beta^-_*(d_\epsilon)}(L^-)$ and $\CF_{\beta^+_*(d_\epsilon)}(L^+)$  induces a map between their Maurer-Cartan solutions. By then appropriately identifying the Maurer-Cartan elements of $\CF_{\beta^\pm_*(d_\epsilon)}(L^\pm)$ with those of $\CF(L^\pm)$, we obtain a pairing between certain Maurer-Cartan elements of $L^-$ with those of $L^+$.
	This is the approach that we take in \cref{app:mutationexamples}.
\end{remark}
\subsubsection{A Holomorphic Disk}
\label{subsub:exceptionalholomorphicdisk}
We now show that for a specific choice of almost complex structure there exists a pseudoholomorphic disk with boundary on $K_{\mu_D}$.
\begin{prop}
	There exists a choice of complex structure so that $K_{\mu_D}$ bounds a regular Maslov index $0$ disk 
	\[
		u_{ex}: (D^2, \partial D^2) \to (X\times \CC, K_{\mu_D}).
	\]
	The contribution of this disk to the $m^0$ term is $x^+ + x^-$ \label{prop:exceptionaldisk}.\end{prop}
\begin{proof}
	We prove the proposition by exhibiting the disk in the local model constructed in \cref{subsec:haugcobordism}.
	Following \cite[Section 2.3]{haug2015lagrangian} we can construct a standard neighborhood $U$ of the Lagrangian anti-surgery disk $D$ identifying $U$ with a subset of $\CC^2$ (which, by abuse of notation, we shall also call $U$).
	We choose a complex structure for $X$ which matches the standard complex structure on $U\subset \CC^2$. 
 	We can pick this identification so that the Lagrangians $L^-$ and $L^+$ when restricted to $U$ match the local model based on the Lefschetz fibration (see \cite{haug2015lagrangian} section 2.3 for implanting the local model).
	 In these coordinates $L_\downarrow\cap L_\uparrow = (0,0)$.
	\begin{align*}
		(L^-\cap U)= & ((L_\downarrow\#L_\uparrow)\cap U) \\
		(L^+\cap U)= & ((L_\uparrow\#L_\downarrow)\cap U).
	\end{align*}
	The restriction $K_U:=K_{\mu_D}\cap (U\times \CC)$ is the eye-shaped Lagrangian cobordism drawn in \cref{fig:catseye}.
	Recall that $K_U$ is obtained by first considering $( L_\downarrow\times \gamma_\downarrow )\cup (L_\uparrow\times \gamma_\uparrow )$, then resolving the self-intersections to obtain $K^{pre}_U$, and subsequently truncating to get a Lagrangian cobordism. 
	The chart $(L_\downarrow\times \gamma_\downarrow )\cup (L_\uparrow\times \gamma_\uparrow)$ contains a holomorphic 2-gon with ends limiting the two self intersection points.
	The goal is to show that when surgery is applied at both self-intersections, the corners of this bigon are rounded to obtain a holomorphic disk with boundary on $K^{pre}_U$.
	The truncation will occur away from the boundary of this holomorphic disk, so the holomorphic disk survives to give a holomorphic disk with boundary on $K_U$.

		The holomorphic bigon $u_2: D^2\setminus\{-1, 1\} \to U\times \CC$ with boundary on $(L_\downarrow\times \gamma_\downarrow )\cup (L_\uparrow\times \gamma_\uparrow)$ has a particularly nice description. Let $u_2' :\RR\times [0,1]\to \CC$ be the holomorphic bigon which parameterizes the area between the curves $\gamma_\downarrow$ and $\gamma_\uparrow$. 
	Then $u_2$ is given by 
	\begin{align*}
		u_2: D^2\setminus\{-1, 1\} \to& U\times \CC\\
		z \mapsto &((0,0), u_2'(z) ).
	\end{align*}
	From this observation, our candidate construction will be to show that $\pi_U^{-1}(0,0)$ has clean $S^1$ intersection with $K_U^{pre}$. The interior of the $S^1$ will then be a holomorphic disk with boundary on $S^1$.

	We need to describe the intersection of $\pi_U^{-1}(0,0)$ with the chart of $K_U^{pre}$ which corresponds to the surgery neck.
	Using the same surgery profile as in \cref{eq:surgeryneck}, the surgery chart for $K_U^{pre}$ is parameterized by
	\begin{equation}
		\{(( a(t)+\jmath b(t))(\hat x+\jmath\hat y), (a(t)+\jmath b(t)) (\hat y+\jmath\hat x),  (a(t)+\jmath b(t)) \hat z) \;:\; \hat x^2+\hat y^2+\hat z^2= 1\}
		\label{eq:neckparameterizationII}
	\end{equation}
	where the first two coordinates are in $U$ and the last coordinate factors through the cobordism parameter.
	By setting $\hat x= \hat y=0$ and $\hat z=1$, we obtain a smooth curve that passes through the surgery neck parameterizing the clean intersection between the complex line $\pi_U^{-1}(0,0)$ and the surgery neck. 
	We call the corresponding disk $u_{ex}:(D^2, \partial D^2)\to (U\times \CC, K_{\mu_D})$. 
			We now show that this disk is regular using a criterion of \cite{oh1995RiemannHilbert}.
	This requires computing the partial Maslov indices of the boundary of $u_{ex}$. 
	The boundary of $u_{ex}$ can be broken up into 4 components: 
	\begin{itemize}
		\item The ``neck portions'', which correspond to where the boundary passes through the surgery neck of $K^{pre}$ and;
		\item The ``fiber trivial portions'', where the boundary is contained in $L_\uparrow\times \gamma_\uparrow$ and $L_\downarrow\times \gamma_\downarrow$. 
	\end{itemize}
	For simplicity, we will assume that the profile curve $a(t)+\jmath b(t)$ draws a quarter circle even though this doesn't provide a smooth Lagrangian surgery neck.
	We also will assume that the curves $\gamma_{\uparrow/\downarrow}$ are quarter circles, and choose coordinates so that $K|_U$ has projection as drawn in \cref{fig:quarters}.
	\begin{figure}
		\centering
		\begin{tikzpicture}

\draw[fill=gray!20] (0,1) .. controls (0.5,1) and (1,1) .. (1.5,1.5) .. controls (1,1) and (1,0.5) .. (1,0);
\draw[fill=gray!20] (-1,0) .. controls (-1,-0.5) and (-1,-1) .. (-1.5,-1.5) .. controls (-1,-1) and (-0.5,-1) .. (0,-1);
\draw[fill=white]  (0,0) ellipse (1 and 1);
\draw[red, thick] (1,0) arc (0:90:1);
\draw[green, thick] (0,1) arc (90:180:1);
\draw[blue, thick] (-1,0) arc (180:270:1);
\draw[orange, thick] (0,-1) arc (270:360:1);
\draw[dotted] (-1.5,0)  -- (1.5,0);
\draw[dotted] (0,1.5) -- (0,-1.5);
\end{tikzpicture} 		\caption{Dividing $K_U$ into four quarters. }
		\label{fig:quarters}
	\end{figure}
	We now consider the a parameterization of the boundary of the holomorphic disk
	$\gamma(\theta): S^1 \to \partial u_{ex}$ so that we may parameterize $TK^{pre}|_{\partial u_{ex}}$ by maps $C\cdot A(\theta): \RR^3\to \CC^3$,
	Here $C$ is a constant matrix, and $A(\theta)$ describes the rotation of the tangent space as we progress through the coordinate $\theta$.
	We subdivide $S^1$ into four quarters corresponding to the  portions of $\partial u_{ex}$ which pass through the right surgery chart, $L_\uparrow \times \gamma_\uparrow$ chart,  left surgery chart, and $L_\downarrow\times \gamma_\downarrow$ chart. 
\begin{itemize}
	\item For $0\leq \theta < \pi/2$ , the curve $\gamma(\theta)$ passes through the right surgery chart.  A basis for the tangent space of \cref{eq:neckparameterizationII} along the curve $(0,0, e^{\jmath\theta})$ is given by 
	\[
		A(\theta)=\begin{pmatrix}
			e^{-\jmath\theta} & 0 & 0\\
			0 & e^{-\jmath\theta} & 0\\
			0 & 0 & e^{\jmath\theta}
		\end{pmatrix}.\]
	\item Next, for $\theta<\pi/2\leq \theta \leq \pi$ the curve $\gamma(\theta)$ passes through the chart parameterized by $L_\uparrow\times \gamma_\uparrow$. The tangent space in the fiber is constant, and we only see rotation of the tangent space in the base component of $X\times \CC$. 
	\[
		A(\theta)=	\begin{pmatrix}
				-\jmath & 0 & 0\\
				0 & -\jmath & 0\\
				0 & 0 & e^{\jmath\theta}
			\end{pmatrix}
	\]
		\end{itemize}
		The rotation of the tangent space through the left surgery chart and $L_\downarrow\times \gamma_\downarrow$ charts is analogous. This shows that the tangent bundle splits as a direct sum of rank 1 pieces.
			The first two pieces rotate a negative half turn, while the component corresponding to the cobordism parameter of $X\times \CC$ makes a full positive rotation.
	The partial Maslov indices of the loop $TK^{pre}|_{\partial u_ex}$ are $-1, -1,$ and $2$.
	Therefore, this is a disk of Maslov index 0, and it is regular by the criterion of \cite[Theorem II]{oh1995RiemannHilbert}.
	
	The homological class of this disk is the loop in the cobordism $K$ which is introduced from the two 1-handle attachments.
	Therefore, the upward flow spaces of $x^\pm$ each meet the boundary of this disk once.
	Since our local model was chosen to have the same $J$-structure as $X$, this gives us an isolated Maslov index $0$ disk $u_{ex}: D^2 \to X\times \CC$ whose boundary lies on the cobordism $K$, and whose boundary contributes to the Morse cohomology class $x^++x^-$. 
\end{proof}

\begin{figure}
	\centering
	\begin{tikzpicture}[rotate=270, scale=.5]
	\draw (-1.5,0) .. controls (-1,0) and (-1,-1.5) .. (-1,-2);
	\draw (0.5,0) .. controls (0,0) and (0,-1.5) .. (0,-2);
	\draw[fill=white]  (-0.5,0.5) ellipse (3 and 1.5);
	\fill[red!20]  (-0.5,0.5) ellipse (1.5 and 0.5);
	\fill[blue!20]  (4,0.5) ellipse (0.5 and 0.25);
	\begin{scope}[]
	\fill[green] (-1,3) .. controls (-1,2.5) and (-1,1) .. (-1.5,1) .. controls (-2,1) and (-2,1.5) .. (-2.5,1.5) .. controls (-3,1.5) and (-3.5,1) .. (-3.5,0.5) .. controls (-3.5,0) and (-3,-0.5) .. (-2.5,-0.5) .. controls (-2,-0.5) and (-2,0) .. (-1.5,0) .. controls (-1,0) and (-1,-1.5) .. (-1,-2) .. controls (-1,-3) and (-0.5,-3.5) .. (0,-3.5) .. controls (0.5,-3.5) and (2.5,-3.5) .. (3,-3.5) .. controls (4,-3.5) and (4.5,-3) .. (4.5,-2) .. controls (4.5,-1.5) and (4.5,2.5) .. (4.5,3) .. controls (4.5,4) and (4,4.5) .. (3,4.5) .. controls (2.5,4.5) and (0.5,4.5) .. (0,4.5) .. controls (-0.5,4.5) and (-1,4) .. (-1,3);
	\fill[white]  (-2.5,0.5) ellipse (0.5 and 0.5);
	\fill[white] (-0.5,3) .. controls (-0.5,3.5) and (0,4) .. (0.5,4) .. controls (1,4) and (2.5,4) .. (3,4) .. controls (3.5,4) and (4,3.5) .. (4,3) .. controls (4,2.5) and (4,-1.5) .. (4,-2) .. controls (4,-2.5) and (3.5,-3) .. (3,-3) .. controls (2.5,-3) and (1,-3) .. (0.5,-3) .. controls (0,-3) and (-0.5,-2.5) .. (-0.5,-2) .. controls (-0.5,-1.5) and (-0.5,-1) .. (-0.5,3);
	
	\draw  (-0.5,0.5) ellipse (3 and 1.5);
	\end{scope}
	\begin{scope}[dotted]
	\draw  (-2.5,0.5) ellipse (0.5 and 0.5);
	\draw  (1.5,0.5) ellipse (0.5 and 0.5);
	\draw  (-0.5,0.5) ellipse (1.5 and 0.5);
	\draw  (-0.5,0.5) ellipse (2.5 and 1);
	\end{scope}
	\fill[white] (-1,3) .. controls (-1,2.5) and (-1,1) .. (-1.5,1) .. controls (-1,1) and (0,1) .. (0.5,1) .. controls (0,1) and (0,2.5) .. (0,3);
	\fill[green] (-1,3) .. controls (-1,2.5) and (-1,1) .. (-1.5,1) .. controls (-1,1) and (0,1) .. (-0.5,1) .. controls (-0.5,1) and (-0.5,2.5) .. (-0.5,3);
	
	\draw (-1.5,1) .. controls (-1,1) and (-1,2.5) .. (-1,3);
	\draw (0.5,1) .. controls (0,1) and (0,2.5) .. (0,3);
	\draw (-1,-2) .. controls (-1,-3) and (-0.5,-3.5) .. (0,-3.5) .. controls (0.5,-3.5) and (2.5,-3.5) .. (3,-3.5) .. controls (4,-3.5) and (4.5,-3) .. (4.5,-2) .. controls (4.5,-1.5) and (4.5,2.5) .. (4.5,3) .. controls (4.5,4) and (4,4.5) .. (3,4.5) .. controls (2.5,4.5) and (0.5,4.5) .. (0,4.5) .. controls (-0.5,4.5) and (-1,4) .. (-1,3) ;
	\draw (0,-1) .. controls (0,-2.25) and (0.25,-2.5) .. (0.5,-2.5) .. controls (1,-2.5) and (2.5,-2.5) .. (3,-2.5) .. controls (3.25,-2.5) and (3.5,-2.25) .. (3.5,-2) .. controls (3.5,-1.5) and (3.5,2.5) .. (3.5,3) .. controls (3.5,3.25) and (3.25,3.5) .. (3,3.5) .. controls (2.5,3.5) and (1,3.5) .. (0.5,3.5) .. controls (0.25,3.5) and (0,3.25) .. (0,3);

	\draw[red, thick](-0.5, -1 ) .. controls (-0.5, -1.5) and   (-0.5, -1.5)    .. (-0.5,-2.1) .. controls (-0.5,-2.5) and (0,-3) .. (0.5,-3) .. controls (1,-3) and (2.5,-3) .. (3,-3) .. controls (3.5,-3) and (4,-2.5) .. (4,-2) .. controls (4,-1.5) and (4,2.5) .. (4,3) .. controls (4,3.5) and (3.5,4) .. (3,4) .. controls (2.5,4) and (1,4) .. (0.5,4) .. controls (0,4) and (-0.5,3.5) .. (-0.5,3) .. controls (-0.5,2.5) and (-0.5,1.5) .. (-0.5,1);
	\draw[red, thick, dotted] (-0.5,1) -- (-0.5,-1);
	
	\end{tikzpicture}
	 	\caption{The red cycle in the cobordism $K$ represents the boundary of an exceptional Maslov index 0 disk contributing to non-trivial $m^0_{K_{\mu_D}}$. The green 2 cycle represents a ``deforming cochain'' which  cancels out the lowest order contribution of this disk.  }
	\label{fig:boundingcochainonk}
\end{figure}

The boundary of this disk is drawn in \cref{fig:boundingcochainonk}. 
This is a holomorphic disk which is isolated in the following sense:
\begin{claim}
	For the standard holomorphic structure, the disk $u_{ex}$ is the only disk contained in $U$ with boundary on $K_{\mu_D}$ in this relative homology class.
	\label{claim:onlydiskoneye}
\end{claim}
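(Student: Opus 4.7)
The strategy is to show that any holomorphic disk $u: (D^2, \partial D^2) \to (U \times \CC, K_{\mu_D} \cap (U\times\CC))$ in the relative homology class $[u_{ex}]$ must factor through the complex line $\pi_U^{-1}(0,0) = \{(0,0)\} \times \CC$. Once this factorization is in hand, the disk is uniquely determined (up to reparameterization) by the Riemann mapping theorem applied to the Jordan curve $\pi_U^{-1}(0,0) \cap K_{\mu_D}$, which by construction bounds $u_{ex}$. The key tool is the composition $\phi := W \circ \pi_U : U \times \CC \to \CC$ with the Lefschetz fibration.

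First I would verify, from the explicit formulas in \cref{subsec:haugcobordism}, that $\phi(K_{\mu_D} \cap (U \times \CC)) \subset \{\Re z \geq 0\}$: the thimbles $L_\uparrow, L_\downarrow$ have $W$-image on the imaginary axis, the surgeries project to the curves $\ell_\uparrow\#\ell_\downarrow, \ell_\downarrow\#\ell_\uparrow$ whose real components $2a(t)b(t)$ are non-negative, and the intermediate slices of $K_c$ inherit this bound via the Hamiltonian isotopy. Thus $h := \phi \circ u$ is a holomorphic function $D^2 \to \CC$ with $\Re h \geq 0$ on $\partial D^2$, and by the minimum principle for the harmonic function $\Re h$ we get $\Re h \geq 0$ throughout $D^2$. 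To upgrade this to $h \equiv 0$, I would use the homology constraint: $u_{ex}$ is contained in $\pi_U^{-1}(0,0)$, so $\phi \circ u_{ex} \equiv 0$, and therefore $[\phi(\partial u)] = [\phi(\partial u_{ex})] = 0$ in $H_1(\phi(K_{\mu_D} \cap (U\times\CC)))$. Combined with positivity of intersection of $u$ with the complex hypersurfaces $W^{-1}(w) \times \CC$ for $w \notin \phi(K)$, the vanishing winding number forces $h$ to have image in $\phi(K)$, and the fact that $h$ must attain the value $0$ at the two corners of the eye $\pi_\CC(K_{\mu_D}\cap(U\times\CC))$ (where the cobordism slice pinches to the singular point $(0,0)$) together with the non-negative real part forces $h \equiv 0$ by a boundary maximum-principle argument.

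Once $h \equiv 0$, the image of $\pi_U \circ u$ lies in $W^{-1}(0) = \{z_1 z_2 = 0\}$. By inspection of the thimbles $L_\uparrow = \{(x+iy, y+ix)\}$ and $L_\downarrow = \{(x+iy, -y-ix)\}$ and the surgery profile $(a(t) + i b(t))$, the only points of $K_{\mu_D} \cap (U\times\CC)$ on $\{z_1 = 0\} \cup \{z_2 = 0\}$ have $(z_1, z_2) = (0,0)$. Hence the holomorphic functions $z_1 \circ \pi_U \circ u$ and $z_2 \circ \pi_U \circ u$ both vanish on $\partial D^2$, and the maximum principle gives $\pi_U \circ u \equiv (0,0)$. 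Then $u$ takes values in $\{(0,0)\} \times \CC$ with boundary curve $\pi_U^{-1}(0,0) \cap K_{\mu_D}$, which is the same Jordan curve bounded by $u_{ex}$, and uniqueness of the holomorphic filling completes the proof. The main obstacle in this plan is the step concluding $h \equiv 0$: the minimum principle is insufficient on its own, and the combination of positivity of intersection, the homological constraint, and the vanishing of $h$ at the two corner points of the eye must be carefully combined.
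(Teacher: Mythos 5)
Your plan projects to the wrong plane, and the step you yourself flag as the main obstacle is a genuine gap that I do not see how to close along your route. The paper's proof uses the projection $\pi_\CC: U\times\CC\to\CC$ onto the cobordism parameter (not $\phi=W\circ\pi_U$) and runs a pure energy argument: since $u$ and $u_{ex}$ share a relative homology class they have equal symplectic area, and for the split structure this area decomposes as $\omega(u)=\mathrm{area}(\pi_U\circ u)+\mathrm{area}(\pi_\CC\circ u)$ with both summands nonnegative because both projections of a holomorphic disk are holomorphic. The boundary of $\pi_\CC\circ u$ traverses the loop of $\pi_\CC(K_{\mu_D}\cap(U\times\CC))$ drawn in \cref{fig:catseye}, whose enclosed area is exactly $\omega(u_{ex})$, so $\mathrm{area}(\pi_\CC\circ u)\geq\omega(u_{ex})=\omega(u)$; this forces $\mathrm{area}(\pi_U\circ u)=0$, hence $\pi_U\circ u$ is constant, and the constant must be $(0,0)$ because that is the only fiber of $\pi_U$ meeting $K_{\mu_D}$ in a loop. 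This bypasses entirely the analysis of $h=W\circ\pi_U\circ u$.

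The gap in your version is the passage from ``$h(D^2)\subset\phi(K_{\mu_D}\cap(U\times\CC))$'' to ``$h\equiv 0$.'' The winding-number and positivity-of-intersections step only confines $h$ to the image $\phi(K_{\mu_D}\cap(U\times\CC))$, which is a two-dimensional region (the curves $\ell_\uparrow\#\ell_\downarrow$ sweep out a lens in $\{\Re z\geq 0\}$ as the neck width varies over the cobordism), so nothing yet excludes a nonconstant $h$. Your proposed fix --- that $h$ must attain the value $0$ at the two corners of the eye --- conflates the two projections: the corners are features of $\pi_\CC(K_{\mu_D})$ in the cobordism-parameter plane, and a boundary point of $u$ lying over such a corner sits somewhere on $L_\uparrow\cup L_\downarrow$, on which $W$ takes values all along the imaginary axis, not only $0$. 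Moreover, even granting a boundary point where $h=0$, the conclusion does not follow: $h(z)=1-z$ has $\Re h\geq 0$ on $D^2$ and vanishes at $z=1$ without being constant, so no boundary maximum principle forces $h\equiv 0$ from these hypotheses alone. Any repair would have to import an area comparison in some form, and it is cleaner to make that comparison directly with $\pi_\CC$ as the paper does.
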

\begin{proof}
	Every disk with boundary on $(K_{\mu_D}\cap (U\times \CC))\subset U\times \CC$ in the same relative homology class as $u_{ex}$  gives a class of disk in $H_2(\CC, \pi_\CC(K_{\mu_D}\cap (U\times \CC)))$. The boundary $\pi_\CC(\partial u_{ex})$ (drawn as the red loop in \cref{fig:catseye}) circles around a connected component of $\CC\setminus  \pi_\CC(K_{\mu_D}\cap (U\times \CC))$ whose area is $\omega_{U\times \CC}(u_{ex})$. 	
	The red loop is the minimal loop which encircles this connected component. Therefore, any other holomorphic disk $u: (D^2, \partial D^2)\to (U\times \CC, K_{\mu_D}\cap (U\times \CC))$ with boundary in the same class as $u_{ex}$ will have projection $\pi_\CC\circ u$ with area at least $\omega_\CC(u_{ex})$. 

	As $\omega_{U\times \CC}(u)= \omega_\CC(u)+\omega_U(u)$, we know that the projection $\pi_U\circ u$ has zero area. 
	However, $\pi_U\circ u$ is a holomorphic map so the zero-area condition means that $\pi_U\circ u$ is constant. 
	The proof is completed upon checking for which $p\in U$ the preimage $\pi^{-1}_U(p)\cap (K\cap U)$ contains a loop in the appropriate homology class.
	This homology class necessarily transverses the $(L_\downarrow\times \gamma_\downarrow )$ and $ (L_\uparrow\times \gamma_\uparrow)$ charts, so $p\in L_\uparrow\cap L_\downarrow=\{(0,0)\}$. Therefore $u_{ex}$ is the only such example.
\end{proof}
\subsection{Algebraic Manipulations}
\begin{notation}
	Let $\epsilon$ be the area $\omega_{X\times \CC}(u_{ex})$. 
\end{notation}
The presence of the disk $u_{ex}$ is the first step to proving that $\CF(K_{\mu_D})$ is a mapping cocylinder.
The disk deforms the Morse products in two significant ways.
\begin{itemize}
	\item
	      The holomorphic disk contributes to $m^0$ as $m^0_{[u_{ex}]}=T^{\epsilon}(x^++x^-)$.
	\item
		  The boundary $\partial u_{ex}$ intersects the downward flow space of $c^{u+}$ at a point and the upward flow space of $x^-$ at a point. 
		  		  This will deform the Morse differential on $\CM(K_{\mu_D})$.
		  		  The non-surjectivity of the map $\underline m^{+;0}_{K}$ in the Morse setting (in comparison to the cylindrical setting given by \cref{eq:morsedifferentialcylinder}) is an obstruction to giving $\CM(K_{\mu_D})$ the structure of a mapping cylinder. 
		  In particular, the contribution of $u_{ex}$ to the differential makes
		  $\langle m^1_{[u_{ex}]}(c^{w+}), x^-\rangle=T^\epsilon$, whereas the underlying Morse differential has $ \langle \underline m^1_{[u_{ex}]}(c^{w+}), x^-\rangle=0$.
		  When this deformation is considered, the map  $m^{+;0}:E^+\to  E^0$ is surjective, giving us some hope that $\CF(K_{\mu_D})$ is a mapping cylinder.
\end{itemize}
This nearly proves that $\CF(K_{\mu_D})$ is a mapping cocylinder. 
However, \cref{thm:cylfrommap} requires that  $(m^{+;0})^{-1}(m^0)$ live in positive filtration. 
As a result, we cannot immediately apply \cref{thm:curvedhtt}.
This leaves the following steps to construct a mapping cocylinder:
\begin{enumerate}
	\item
		  \textbf{Deforming to increase  $\val(m^0|_{E^0})$, \cref{subsub:removingcurvature}} 
		  We first equip $K_{\mu_D}$ with a deforming cochain $d_\epsilon$ which will cancel out the curvature contribution of the disk $u_{ex}$, so that the energy of $m_{d_\epsilon}^0$ will be greater than $\epsilon$. \label{item:findboundingcochain}
	\item
		  \textbf{Inverting $(m_{d_\epsilon})^{+;0} $, \cref{subsub:invertingends}} 
		  We show that the map $(m_{d_\epsilon})^{+;0} $ can now be inverted. 
		  This is complicated by the introduction of the deforming cochain $d_\epsilon$. \label{item:applyhpl}
	\item
		\textbf{Applying \cref{thm:cylfrommap}, \cref{subsub:applinghtt}} We conclude that  $\CF_{d_\epsilon}(K_{\mu_D})$ is a filtered mapping cocylinder. 
		As a corollary, we conclude that $K_{\mu_D}$ is unobstructed whenever $L^-$ is. \label{item:makecylinder}
\end{enumerate}
We will make a simplifying assumption (\cref{cond:isolatedmutation}) to complete \cref{item:applyhpl}.

\subsubsection{Deforming to remove $m^0|_{E^0}$.}
\label{subsub:removingcurvature}
When working over the Novikov field we can prove that a map is invertible by showing that it is invertible at low filtration. We will denote by $\val: \Lambda\to \RR\cup\{\infty\}$ the Novikov valuation, with the convention that $\val(0)=\infty$. Following \cite{usher2016persistent}, we will use \emph{non-Archimedean normed vector spaces}, which are $\Lambda$-vector spaces equipped with a filtration function $\ell: A\to \RR\cup\{\infty\}$ satisfying\footnote{To match the conventions from \cite{charest2015floer}, our filtration function has the opposite sign convention of \cite{usher2016persistent}.} the axioms of \cite[Definition 2.2]{usher2016persistent}.

We introduce some notation which will help us use this method of proof. 

\begin{definition}
    Let $(A, \fil_A)$ and $(B, \fil_B)$ be non-Archimedean normed vector spaces.
    The \emph{filtration} of a map $\Theta: A\to B$ is the largest jump in the filtration map under $\Theta$. 
    \[
        \fil_{A, B}(\Theta):= \sup_{v\;:\; \fil_A(v)=0} \{\fil_B(\Theta(v))\}.
    \]
    The \emph{leading order} of a map $\Theta: A\to B$ is the largest drop in filtration map under $\Theta$. 
    \[
        \ord_{A,B}(\Theta):= \inf_{v\;:\; \fil_A(v)=0} \{\fil_B(\Theta(v))\}.
    \]
\end{definition}
Our reason for using filtration of maps will be to construct inverses.
\begin{claim}
    If $\fil_{A, B}(\Theta)<\infty$, then $\Theta$ has a left inverse.
    If this is also a right inverse then 
    \begin{align*}
        \fil_{A, B}(\Theta)=-\ord_{B, A}(\Theta^{-1}) && \fil_{B, A}(\Theta^{-1})=-\ord_{A, B}(\Theta).
    \end{align*}
    \label{claim:filtrationinvertable}
\end{claim}

\begin{proof}
    The condition $\fil_{A, B}(\Theta)<\infty$ states that $\Theta(v)\neq 0$ for any $v\neq 0$, so our map has a left inverse.

    Suppose that $\Theta^{-1}$ is the inverse of $\Theta$. 
    Let $v_i$ be a sequence of vectors with $\fil_A(v_i)=0$ so that $\lim_{i\to \infty }\fil_B(\Theta(v_i))=\fil_{A, B}(\Theta)$.
    Since $\fil_A(\Theta^{-1}\circ \Theta(v_i))=\fil_A(v_i)=0$, we obtain that $\ord_{B, A}(\Theta^{-1})\geq-\fil_{A, B}(\Theta)$. 
    Similarly, let $w_i$ be a sequence of vectors with $\fil_B(w_i)=0$ so that  $\lim_{i\to \infty }\fil_B(\Theta^{-1})(w_i)=\ord_{B, A}(\Theta^{-1})$.
    Since $\fil_B(\Theta\circ \Theta^{-1}(w_i))=\fil_B(w_i)=0$, we obtain that $\fil_{A, B}(\Theta)\leq -\ord_{B, A}(\Theta^{-1})$. 
\end{proof}
If one possesses a bound on the filtration and order of a map, then one can obtain a bound on their sum. 
\begin{claim}
    Suppose that $\fil_{A, B}(\Theta_1)< \ord_{A, B}(\Theta_2)$.
    Then $\fil_{A, B}(\Theta_1+\Theta_2)=\fil_{A, B}(\Theta_1)$.
\end{claim}
\begin{proof}
    First, we note that for any vector $v$,
     \[\fil_B(\Theta_1(v))\leq \fil_{A, B}(\Theta_1)<\ord_{A, B}(\Theta_2)\leq \fil_B(\Theta_2(v))\] and so by application of non-Archimedean triangle inequality:
     \[\fil_B(\Theta_1(v)+\Theta_2(v))=\fil_B(\Theta_1(v)).\]
    Let $v_i$ be a sequence of $\fil_A(v_i)=0$ vectors realizing $\lim_{i\to\infty}\fil_B(\Theta_1(v_i))=\fil_{A, B}(\Theta_1)$. 
    \begin{align*}
        \fil_{A, B}(\Theta_1+\Theta_2)\geq &\lim_{i\to\infty}\fil_{B}(\Theta_1(v_i)+\Theta_2(v_i))
        =\lim_{i\to\infty}\fil_B(\Theta_1(v_i))=\fil_{A, B}(\Theta_1). 
    \end{align*}
    For the other direction, let $w_i$ be a sequence of vectors zero-filtration vectors for which  $\lim_{i\to\infty}\fil_B((\Theta_1+\Theta_2)(w_i))=\fil_{A, B}(\Theta_1+\Theta_2)$ is achieved. 
    \begin{align*}
        \fil_{A, B}(\Theta_1+\Theta_2)=\lim_{i\to\infty}\fil_B(\Theta_1(w_i)+\Theta_2(w_i))
        = \lim_{i\to\infty} \fil_B(\Theta_1(w_i))\leq \fil_{A, B}(\Theta_1).
    \end{align*}
\end{proof}
\begin{definition}
    Let $\Theta: A\to B$ be a map. We say that $\Theta$ has leading terms  $\Theta_{\leq \lambda}$ of filtration $\lambda$ and write 
    \[\Theta = \Theta_{\leq \lambda}+ O(\lambda)\]
    if $\Theta = \Theta_{\leq \lambda}+ R$, with
        \[\fil_{A, B}(\Theta_{\leq \lambda}) = \lambda <\ord_{A, B}(R).\]
\end{definition}
 
We will now assume that $\mu_D$ is an isolated mutation. 

\begin{claim}
	Suppose that $\mu_D( L)$ is an isolated mutation.
	Let $\pi_{E^0}: \CF(K_{\mu_D})\to E_0$ be the standard projection.
	Define $\lambda:=\fil_{E_0}(\pi_{E^0}\circ m^0)$.
		There exists a deforming cochain  $d \in \CF(K_{\mu_D})$ which increases the filtration of the curvature,
	\[
		\fil_{E_0}( \pi_{E^0} \circ m^0_{d})> \lambda,
	\]
	with $\fil(d)\geq \lambda$. 
	\label{claim:boundingcochainexistence}
\end{claim}
\begin{proof}	The restriction $\pi_{E^0}\circ m^1|_{E^-\oplus E^+}: E^-\oplus E^+\to E^0$ surjects, and has a right inverse $D$.
	After choosing this right inverse we can construct 
	\begin{equation}
		d:=-D(m^0)|_{\Lambda\langle x^-, x^+\rangle}.
		\label{eq:choiceofsplittingII}
	\end{equation}
	Because the order of $D$ is zero,  $\fil(d)\geq \lambda$. 
	\begin{align*}
		\fil_{E_0}( \pi_{E^0}\circ m^0_{d})&= \fil_{E_0}\left(  \pi_{E^0}\circ m^0+ \pi_{E^0}\circ m^1(d)+\sum_{k}  \pi_{E^0}\circ m^k(d^{\tensor k})\right)                   \\
																 & \geq \min\left( \fil_{E_0}(\pi_{E^0}\circ(m^0+m^1(d))), \fil_{E_0}\left(\sum_{k}  \pi_{E^0}\circ m^k(d^{\tensor k})\right)\right) \\
	\intertext{By \cref{cond:isolatedmutation}, the lowest energy term of $m^0$ is $T^{\omega(u_{ex})}\cdot(x^-+x^+)$, which exactly cancels $\underline{m}^1(d)$. Therefore, $\fil_{E_0}(\pi_{E^0}(m^0+m^1(d))>\lambda.$
	Since $\fil (m^k(d^{\tensor k}))\geq k\lambda$, we conclude:}
	& > \min(\lambda, 2\lambda) =\lambda.
	\end{align*}
	\end{proof}

\subsubsection{Inverting $m^{+;0}|_{E^+}$.}
\label{subsub:invertingends}

We now choose the deforming cochain from \cref{claim:boundingcochainexistence} so that  $\fil(\langle m^0_{d_\epsilon}, E^0\rangle)>\epsilon$.
Because our mutation is assumed to be isolated (\cref{cond:isolatedmutation}), we obtain a lower bound for the filtration of the deforming cochain,
\begin{equation}
	\val(d_\epsilon)\geq \epsilon.
	\label{eqn:valuationofcochain}
\end{equation}

\begin{claim}
	The map $(m^{+;0})_{d_\epsilon}: E^+\to E^0$ is invertible, with $\ord_{E^+, E^0}(((m^{+;0})_{d_\epsilon})^{-1})\geq -\epsilon$.
	\label{claim:invertingmap}
\end{claim}

\begin{proof}
	To do so, we expand $ (m^{+;0})_{d_\epsilon}$ term wise:
	\[
		(m_{d_\epsilon})^{+;0}|_{E^+}=  m^{+;0}+\left.  m^2_0(d_{\epsilon}\tensor \id +\id\tensor d_\epsilon)\right|_{E^+} \pi_{E^0}\circ\left.\left(\sum_{k> 2}  m^k(d_{\epsilon}^{\tensor k_1}\tensor \id \tensor d_{\epsilon}^{\tensor k_2})\right)\right|_{E^+}
	\]
	and compute the valuation of each term.
		We will compute this map with respect to the basis $\{c^{u+}, c^{w+}\} , \{x^-, x^+\}$. 
	\begin{itemize}
		\item
		The first term $m^{+;0}$ can be explicitly computed as the isolated mutation condition means that the space of pearly flow lines of energy less than or equal to $\epsilon$ are cut out regularly, and the property \cref{lemma:geometricflowlines} means that these are the only pearly flow lines which contribute to the differential at an order of $\epsilon$ :
	\[m^{+;0}=\begin{pmatrix} T^\epsilon & 0\\ T^\epsilon &1\end{pmatrix}+ O(\epsilon).\]
		\item
		The terms $\left.  m^2(d_{\epsilon}\tensor \id +\id\tensor d_\epsilon)\right|_{E^+}$ can be further split by the homology class of the disks which deforms the product $m^2$. 
		      \[
			      \left. m^2\circ(d_\epsilon\tensor \id )\right|_{E^+} = \sum_{\substack{\beta\in H^2(X, L)\\ \omega(\beta)\geq 0}}\left. m^2_\beta\circ(d_\epsilon\tensor \id )\right|_{E^+}
		      \]
			  Whenever $\omega(\beta) \geq 0$, the term $m^2_\beta(d_\epsilon \tensor  c^{u+})$ has filtration at least $\val(d_\epsilon)+\val(\beta)>\epsilon$.
			  So we need only worry about the classical portion product, $(\underline{m})^2(d_\epsilon\tensor  \id)$.
			  Since  $\CM(L^+)\oplus \langle x^+ \rangle \oplus \CM(L^0)$ is an ideal of $\CM(K_{\mu_D})$, 
			  \[\langle( \underline{m})^2(d_{\epsilon}\tensor  c^{u+}), x^-\rangle =0.\]
			  So we can write 
	\[\underline{m}^2(d_\epsilon \tensor \id)+\id\tensor d_\epsilon)|_{E^+}= \begin{pmatrix} 0&0\\ A_{u^+ x^+}& A_{w^+ ,x^+}\end{pmatrix}+ O(\epsilon).\]
		where either $A_{u^+ x^+}, A_{w^+ ,x^+}$ have  filtration $\epsilon$ or are zero.
		\item
		The remaining terms
		$\langle \sum_{k> 2}  m^k(d_\epsilon^{\tensor k_1}\tensor \id \tensor d_\epsilon^{\tensor k_2}), x^-\rangle $ necessarily have filtration at least $2\epsilon$ because the product is filtered and  \cref{eqn:valuationofcochain}.
		
	\end{itemize}
	In summary, in the basis $\{c^{u+}, c^{w+}\} , \{x^-, x^+\}$,
		\[
		\pi_{E^0}\circ (m_{d_\epsilon})^{+;0}|_{E^+}= \begin{pmatrix}
			T^{\epsilon} & 0 \\
			T^{\epsilon} +A_{u^+ x^+} & 1 + A_{w^+ x^+}
		\end{pmatrix}+ O(\epsilon).
	\]
	Since the order $\epsilon$ portion of this map is invertible by \cref{claim:filtrationinvertable}, the map $\pi_{E^0}\circ (m^{+;0})_{d_\epsilon}|_{E^+}$ is invertible. The order of the inverse is at least $-\epsilon$.
\end{proof}	
Note that a similar argument shows that $(m_{d_\epsilon})^{+;0}:\CF_{d_\epsilon}(L^+)\to \CF_{d_\epsilon}(L^0)\oplus E^0$ is invertible. 
\subsubsection{Checking conditions of \cref{thm:cylfrommap}}
\label{subsub:applinghtt}
To show that $\CF(K_{\mu_D})$ is an $A_\infty$ mapping cocylinder (\cref{def:mappingcylinder}) set $A^\pm= \CF_{d_\epsilon}(L^{\pm})$ and $A^0=\CM_{d_\epsilon}(L^0)\oplus E^0$. 
We've proven that $\CF_{d_\epsilon}(K_{\mu_{D}})=A^-\oplus A^0\oplus A^+$ as a vector space. 
By \cref{assum:pearlycompatibility} differential on this complex is of the form 
\[
	\begin{pmatrix}
		(m_{d_\epsilon})^1_{A^-} & 0         & 0            \\
		(m_{d_\epsilon})^{-;0}  & (m_{d_\epsilon})^{0;0} & (m_{d_\epsilon})^{+;0}\\
		0         & 0         & (m_{d_\epsilon})^1_{A^+}
	\end{pmatrix}.
\]
By \cref{claim:invertingmap}, the map $ (m_{d_\epsilon})^{+;0}$ is invertible with order greater than $-\epsilon$. 
By \cref{cond:isolatedmutation}, $\val((m_{d_\epsilon})^{+;0}\circ m^0_{d_\epsilon})\geq 0$. 
We therefore satisfy the conditions for a mapping cocylinder (\cref{def:mappingcylinder} ) and may apply \cref{thm:cylfrommap}.
This concludes the proof of \cref{thm:wallcrossingcobordism}.

\section{Examples: wall-crossings and mutations}
	\label{app:mutationexamples}
	We now explore some applications of \cref{thm:cylindricityofcobordism} and \cref{thm:wallcrossingcobordism}. 
	We review an example from \cite{auroux2007mirror}, computing wall-crossing for Chekanov and product tori in the complement of an anticanonical divisor.
Consider the Lefschetz fibration with total space $\CC^2\setminus\{z_1z_2=1\}$,  
\begin{align*}
	W: \CC^2\setminus \{z_1z_2=1\}\to       & (\CC \setminus \{1\} )  \\
	(z_1, z_2)\mapsto & z_1z_2
\end{align*}
as drawn in Figure \ref{fig:wallcrossing}.  
We symplectically inflate this manifold at the removed divisor by taking the completion along the removed hypersurface. 
This resulting manifold has the same topology as $(\CC^*)^2\setminus \{z_1z_2=1\}$. The new symplectic form can be chosen so that the projection $W:(\CC^2\setminus \{xy=1\})\to (\CC\setminus\{1\})$ remains a symplectic fibration. 
The regular fibers of this map have the topology of $\CC^*$ and can be given an SYZ fibration
\begin{align*}
	\syz_{W^{-1}(z)}: W^{-1}(z)\to \RR &&
	(z_1, z_2)\mapsto |z_1|^2-|z_2|^2.
\end{align*}
which is a restriction of the global Hamiltonian to a fiber.
 The Lefschetz fibration has a single degenerate fiber $z_1z_2=0$.  
The base of the fibration can also be equipped with an SYZ fibration.
We will take the fibration of the base given by loops $\gamma_r(\theta):=1+r e^{2\pi \jmath \theta}$.
The symplectic parallel transport map given by the Lefschetz fibration preserves the isotopy class of SYZ fibers of $W^{-1}(z)$;
as a result, one can build an SYZ fibration for the total space $\CC^2\setminus \{z_1z_2=1\}$ by taking the circles  $\syz_{W^{-1}(z)}^{-1}(s)$  and parallel transporting them along circles $\gamma_r(\theta)$ of the second fibration to obtain Lagrangian tori
\[
	L_{\gamma_r, |w| }= \left\{(z_1,z_2)\in \CC^2\setminus\{z_1z_2=1\} \; \middle| \; z_1z_2\in \gamma_r, |z_1|^2-|z_2|^2= \log{|w|}\right\} .
\]
The resulting SYZ fibration has one degenerate fiber which occurs when $\log|w|=0$ and $r$ approaches 1. 
The degenerate fiber $L_{\gamma_1, 1}$ is the Whitney sphere, an immersed Lagrangian sphere with a single double point.
\begin{figure}
	\centering
	\begin{tikzpicture}

    \begin{scope}[shift={(1,-1)}]
    
    \draw  (13.5,0) ellipse (0.5 and 1);
    \draw (13.5,-1) -- (19.5,-1);
    
    \draw (13.5,1) -- (19.5,1);
    \end{scope}
    
    \begin{scope}[shift={(16.65,0)}, scale=0.25]
    \draw  (-1.5,7.5) ellipse (1 and 0.5);
    \draw (-2.5,7.5) .. controls (-2,6) and (-2,6) .. (-2,5.5) .. controls (-2,5) and (-2,5) .. (-2.5,3.5);
    \draw  (-1.5,3.5) ellipse (1 and 0.5);
    \draw (-0.5,7.5) .. controls (-1,6) and (-1,6) .. (-1,5.5) .. controls (-1,5) and (-1,5) .. (-0.5,3.5);
    \draw[red]  (-1.5,6.5) ellipse (0.65 and 0.25);
    \end{scope}

    \begin{scope}[shift={(18.55,0)}, scale=0.25]
    \draw  (-1.5,7.5) ellipse (1 and 0.5);
    \draw  (-1.5,3.5) ellipse (1 and 0.5);
    \draw (-2.5,7.5) -- (-0.5,3.5);
    \draw (-2.5,3.5) -- (-0.5,7.5);
    \node[fill, circle, scale=.3, purple] at (-1.5,5.5) {};
    \end{scope}
    \draw[->] (16.3,0.5) -- (16.3,-1);
    \draw[->] (18.2,0.5) -- (18.2,-0.95);
\begin{scope}[shift={(-0.7,2.5)}]
    \begin{scope}[]

	\clip  (16.5,-5) rectangle (15.5,-2);
    \draw[dashed, red]  (16.5,-3.5) ellipse (0.5 and 1);
\end{scope}
\begin{scope}[]

\clip  (16.5,-5) rectangle (17.5,-2);
    \draw[red]  (16.5,-3.5) ellipse (0.5 and 1);
\end{scope}
\end{scope}
\begin{scope}[shift={(4,2.5)}]
    \begin{scope}[]

	\clip  (16.5,-5) rectangle (15.5,-2);
    \draw[dashed]  (16.5,-3.5) ellipse (0.5 and 1);
\end{scope}
\begin{scope}[]

\clip  (16.5,-5) rectangle (17.5,-2);
    \draw[]  (16.5,-3.5) ellipse (0.5 and 1);
\end{scope}
\end{scope}
\begin{scope}[shift={(2.75,2.5)}]
    \begin{scope}[]

	\clip  (16.5,-5) rectangle (15.5,-2);
    \draw[dashed, blue]  (16.5,-3.5) ellipse (0.5 and 1);
\end{scope}
\begin{scope}[]

\clip  (16.5,-5) rectangle (17.5,-2);
    \draw[blue]  (16.5,-3.5) ellipse (0.5 and 1);
\end{scope}
\end{scope}
\draw[<->] (15.7,1.6) -- (15.7,1.4);
\node[left] at (15.6,1.5) {$\log|w|$};
\node[right] at (16.3,-1) {$\gamma$};
\node at (17.85,-1) {$0$};
\node at (17.5,-1.5) {$\mathbb C$};
\node[left] at (14,-1) {$W=1$};
\node[right] at (21,-1) {$W=\infty$};
\node[red] at (16,-2.5) {Chekanov};
\node[blue] at (19.5,-2.5) {product};
\node at (18.2,-1) {$\times$};
\end{tikzpicture} 	\caption{Symplectic fibration over $W: (\CC^2\setminus\{z_1z_2=1\})\to (\CC\setminus\{1\})$.
	Lagrangian tori are created by parallel transporting cycles in the fibers by loops in the base. Loops on the left of the critical value are of Chekanov type, while those on the right are product type.  
}
\label{fig:wallcrossing}
\end{figure}
We may generalize this construction to Lagrangians $L_{\gamma, |w|}$ for curves $\gamma: S^1\to \CC\setminus \{1\}$ which wind around the removed point a single time.
Such curves are divided into three types: the \emph{Chekanov} type curves which additionally wind around the origin,  the \emph{product} type curves which do not, and those curves $\gamma$ which contain the origin.
If the curve $\gamma$ contains the origin, we say the Lagrangian $L_{\gamma, |w|}$ is on the wall between Chekanov and product type.
By lifting these curves to Lagrangian submanifolds via parallel transport of cycles in the fibers, we obtain the Chekanov and product type Lagrangian tori in $(\CC^*)\setminus {z_1z_2=1}$. 
We will denote the Chekanov-type Lagrangian tori as $L^-_{\gamma, |w|}$ and the product type tori as $L^+_{\gamma,|w|}$. 
\subsection{Uncorrected charts on the moduli space}
\label{subsubsec:choiceofcoordinates}
Wall-crossing for Lagrangian submanifolds is phenomenon which occurs when we try to parameterize the  space of these Lagrangian tori with coordinates.
These coordinates should be constructed over the Novikov ring, although for the purposes of this exposition we will use complex coordinates and unitary local systems. 
We consider Lagrangian branes, which are tuples $(L_{\gamma, |w|},b)$, where $b\in H^1(L, \jmath\RR)$ gives us a unitary local system on $L_{\gamma}$ via deformation of the Floer cohomology following \cite[Lemma 4.1]{auroux2007mirror}.
The space of Hamiltonian isotopy classes of Chekanov (respectively product) Lagrangian branes comes with local coordinates from measuring the flux of an isotopy, which we now describe.

Let $(L_0, b_0)$ and $(L_1, b_1)$ be two Lagrangian submanifolds equipped with local systems and let $L_t$ be a Lagrangian isotopy between these two Lagrangians. 
Fix $c\in C_1(L_0)$. The Lagrangian isotopy gives a cylinder $c \times I\subset X$ with boundary $(c\times\{0\})\sqcup (c\times \{1\})\subset L_0\sqcup L_1$. 
The \emph{flux} of this isotopy along $c_0$ is the quantity
\[
	\Flux_{L_t}(c):=-\left( \int_{c\times \{0\}}b_0\right)+ \left( \int_{c\times \{1\}}b_1\right)+ \left(\int_{c_0\times I} \omega \right).
\]
This defines a complex valued cohomological class $\Flux_{L_t}\in H^1(L_0, \CC)$. 
After picking a base point for our moduli space and a basis for homology, the flux cohomology class gives us local coordinates on the moduli space of Lagrangians up to Hamiltonian isotopy. 

In our example of $X=\CC^2\setminus \{z_1z_2=0\}$, one choice of basis comes from compactifying $X$ to $\CC^2$ and noting that  $L^{\pm }_{\gamma, |w|}\subset(\CC^2)$ now bound holomorphic disks whose boundary class pick out a basis for homology.  
\begin{itemize}
	\item
	 On the Chekanov family, we call these two classes $c_w$ and $c_u$. 
	The class $c_w\in H_1(L^-_{\gamma, |w|})$ is the class of the circle in a fiber of the moment map which is parallel transported around to obtain the Lagrangian $L^-_{\gamma, |w|}$.
	When $|w|=1$, this is the vanishing cycle of the Lefschetz fibration. 
	The second class is obtained by compactifying the total space to $\CC^2$. In $\CC^2$, there is a family of Maslov-2 disks holomorphic disks with boundary on $L^-_{\gamma, |w|}\subset\CC^2$.
	The homology class of the boundary of such a class is called $c_u$. 
	\item
	For the product family, we call these two classes $c_{r}, c_{s}$. 
	They are both obtained from considering $L^+_{\gamma, |w|}$ inside the compactification $\CC^2$, where the Lagrangian torus bounds 2 families of Maslov-2 holomorphic disks.
	If $L^+_{\gamma,|w|}$ is the standard product torus $(r_1e^{\jmath\theta_r}, r_2 e^{\jmath\theta_s})$, then the classes $c_{r}$ and $c_{s}$ correspond to the meridional and longitudinal classes of the product torus. 
	We call the corresponding classes of disks $c_r, c_s\subset H_1(L^+_{\gamma, |w|})$. 
\end{itemize}
Once we have fixed these homology classes, we can construct coordinates on the space of Chekanov (respectively product) Lagrangian branes by measuring flux against a fixed Lagrangian. 
Fix the loop $\gamma_0=1+e^{\jmath\theta}$. 
This loop is neither of Chekanov or product type; however, it still makes sense to measure the flux against the Whitney sphere $L_{\gamma_0, 1}$ for both the Chekanov and product Lagrangians. 
\begin{claim}
	Let $L_t$ be a Lagrangian isotopy with $L_0=L_{\gamma_0, 1}$ and $L_t$ of Chekanov (resp product) type for all $t\in (0, 1]$. 
	Then the flux class $\Flux_{L_t}\in H^1( L_1, \CC)$ depends only on the Lagrangian $L_1$. 
\end{claim}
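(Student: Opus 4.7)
The plan is to compare two isotopies $L_t^{(0)}, L_t^{(1)}$ from $L_0 := L_{\gamma_0, 1}$ to a fixed Chekanov (resp.\ product) Lagrangian brane $L_1$ by analyzing their concatenation, which is a loop based at $L_0$ passing through $L_1$.

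First, I would observe that the $b_0, b_1$ terms in the flux formula depend only on the endpoints of the isotopy, so the difference $\Delta\Flux(c) := \Flux_{L_t^{(0)}}(c) - \Flux_{L_t^{(1)}}(c)$ reduces to $\int_{\Sigma_c}\omega$, where $\Sigma_c$ is the closed 2-chain $(c \times I)^{(0)} - (c \times I)^{(1)}$ obtained by gluing the two isotopy cylinders along their common boundary in $L_0 \cup L_1$. It thus suffices to show $\Sigma_c$ is nullhomologous in $X$ with respect to $[\omega]$.

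The key observation is that any two such isotopies admit a 2-parameter interpolation $\{L_t^s\}_{s \in [0,1]}$ staying in the Chekanov chamber for $t > 0$. Given such a family, the parallel transports of $c$ sweep out a 3-chain in $X$ whose boundary is precisely $\Sigma_c$, and then $\int_{\Sigma_c}\omega = 0$ by Stokes and $d\omega = 0$. The existence of the 2-parameter family reduces to showing that the moduli of Chekanov Lagrangian branes is simply connected, which I would verify by exhibiting a contractible parameterization: the fiber coordinate $|w| \in \RR_{>0}$, a real parameter measuring the enclosed symplectic area of the Chekanov curve $\gamma$ up to Hamiltonian isotopy in $\CC\setminus\{1\}$, and the local system $b \in H^1(L, i\RR) \cong (i\RR)^2$. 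The product chamber admits an analogous description.

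The principal difficulty is the degeneracy of the basepoint $L_{\gamma_0, 1}$, which is the immersed Whitney sphere and sits on the wall separating the two chambers; since both isotopies only enter the Chekanov chamber for $t > 0$, the 2-parameter family has to be controlled carefully near $t = 0$. I would handle this by first proving the analogous statement for isotopies between two Chekanov branes $L_1, L_1'$ (where the simple connectedness argument applies directly within the chamber), and then extracting the present claim as a limit $L_1' \to L_{\gamma_0, 1}$. Continuity in the limit follows because $\omega$ extends smoothly across the wall and the 2-chain $c \times I$ varies continuously with the isotopy, with the 1-cycle $c$ chosen so that its parallel transport avoids the double point of the Whitney sphere.
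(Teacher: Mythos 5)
The paper states this claim without proof, so there is no in-text argument to compare against; I am assessing your proposal on its own terms. Your overall strategy --- reduce the difference of fluxes to $\int_{\Sigma_c}\omega$ over the closed cylinder obtained by concatenating the two isotopies, then kill this integral by filling the loop with a two-parameter family and applying Stokes --- is the standard homotopy-invariance-of-flux argument, and it is the right shape of proof.

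The genuine gap is in the step that carries all the weight: the existence of the interpolating family $\{L_t^s\}$, i.e.\ contractibility of the concatenated loop inside the space of Chekanov-type Lagrangians. You reduce this to simple connectivity of the moduli of Chekanov branes and justify that by a parameterization of the moduli space \emph{up to Hamiltonian isotopy}. Two problems. First, the Stokes argument needs the loop to be contractible in the space of actual Lagrangian submanifolds, not in its quotient by $\mathrm{Ham}$; to descend you must additionally argue that loops lying in a single Hamiltonian orbit have zero flux (true, since $\iota_{X_{H_t}}\omega|_{L}=dH_t|_{L}$ is exact) and that the quotient has enough homotopy-lifting to run the fibration argument --- neither is addressed. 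Second, the assertion that $(\log|w|,\mathrm{area}(\gamma))$ gives a \emph{contractible parameterization} of the quotient is essentially the statement that these action-type invariants classify Chekanov tori up to Hamiltonian isotopy; that is comparable in difficulty to the claim itself (it is, in effect, the content of the next claim in the paper) and is asserted rather than proven. Note also that the local-system factor is irrelevant here, since the $b_0,b_1$ terms of $\Flux$ depend only on endpoints, so only the space of Lagrangians, not of branes, enters. A more self-contained route, suggested by how the paper defines $c_u,c_w$ (resp.\ $c_r,c_s$) as boundaries of disks in the compactification $\CC^2$, is to cap off $c\times\{0\}$ and $c\times\{1\}$ by such disks $D_0,D_1$ (chosen away from the inflated divisor); since $H_2(\CC^2)=0$ one gets $\int_{c\times I}\omega=\int_{D_1}\omega-\int_{D_0}\omega$, manifestly a function of the endpoints alone, with no connectivity input. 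Finally, your handling of the degenerate basepoint is directionally right, but the continuity of flux in the limit $L_1'\to L_{\gamma_0,1}$ and the behaviour of the vanishing-cycle class (whose fiber representative over the critical value collapses to the double point of the Whitney sphere) would need to be spelled out.
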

The flux constructs local coordinates on the space of Chekanov (resp product) type tori. 
\begin{definition}
	For $(u, w)\in (\CC^*)^2$, define the classes
	\[
		[L_{u, w}^-]_{\Flux}:= \{ (L_{\gamma, |w|}^-, b)\;:\; \text{(*)}, \exp(\Flux_{L_t}(c_u))=u,\exp (\Flux_{L_t})(c_w)=w \}.
	\]
	For $(r,s)\in (\CC^*)^2$, define the class of Lagrangians
	\[
		[L_{r, s}^+]_{\Flux}:= \{ (L_{\gamma, |w|}^+, b)\;:\; \text{(**)}, \exp(\Flux_{L_t}(c_r))=r, \exp(\Flux_{L_t}(c_s))=s \}.
	\]
	The condition $(*)$ means that $L_t$ is an isotopy of Chekanov type tori starting at $L_{\gamma_0, 1}$ and ending at $(L_{\gamma, |w|}^-, b)$. 
	The condition $(**)$ means that $L_t$ is an isotopy of product type tori starting at $L_{\gamma_0, 1}$ and ending at $(L_{\gamma, |w|}^+, b)$. 
\end{definition}
These classes described are subsets of the equivalence classes of Lagrangians under the relation of Hamiltonian isotopy. 
\begin{claim}
	The classes  $[L_{u,w}^-]_{\Flux}$ (resp. $[L_{r, s}^+]_{\Flux}$) are the equivalence classes of Chekanov (resp. product) tori under the equivalence relation of Hamiltonian isotopy through Chekanov (resp. product) tori. 
	Furthermore, with the standard complex structure, no member of such an isotopy will bound a holomorphic disk in $(\CC^*)^2\setminus \{z_1z_2=1\}$. 
\end{claim}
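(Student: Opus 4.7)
The claim splits into two essentially independent statements: (A) that the flux coordinates $(u,w)$ (resp.\ $(r,s)$) are a complete invariant of Hamiltonian isotopy through Chekanov (resp.\ product) tori; and (B) that no Lagrangian within such an isotopy bounds a non-constant holomorphic disk in $X = (\CC^*)^2 \setminus\{z_1z_2=1\}$. I will treat these separately.

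For (A), the strategy is the standard identification of flux as the complete obstruction to promoting a Lagrangian isotopy to a Hamiltonian isotopy, following Weinstein. The forward direction (same Hamiltonian isotopy class implies same flux class) is immediate: a Hamiltonian isotopy between two Chekanov branes has vanishing symplectic flux on every cycle, and parallel transport of unitary local systems along such an isotopy preserves their periods; concatenating a reference path from $L_{\gamma_0,1}$ with any such Hamiltonian isotopy yields an isotopy whose complex flux against $c_u,c_w$ is unchanged. For the reverse direction, given two Chekanov branes $(L_1,b_1), (L_2,b_2)$ in the same flux class, I would concatenate the chosen isotopy from $L_{\gamma_0,1}$ to $(L_1,b_1)$ with the reverse of the chosen isotopy to $(L_2,b_2)$ to obtain an isotopy from $(L_1,b_1)$ to $(L_2,b_2)$ whose complex flux against $\{c_u,c_w\}$ lies in $2\pi i\ZZ$. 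Modifying the local system by the corresponding integral class (which leaves the unitary brane unchanged) produces an honest Lagrangian isotopy with zero complex flux; the Weinstein flux theorem then deforms it, rel endpoints, to a Hamiltonian isotopy. It remains to check the deformation can be kept within Chekanov tori, which follows because the space of Chekanov parameters $(\gamma,|w|,b)$ is path-connected and the standard Moser-type deformation that removes flux can be arranged to preserve the type of fibration over the base loop $\gamma$.

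For (B), the key observation is that the Lefschetz projection $W: X \to \CC\setminus\{1\}$ is holomorphic and that $L^\pm_{\gamma,|w|}\subset W^{-1}(\gamma)$. Given a non-constant holomorphic disk $u:(D,\partial D)\to(X,L^\pm_{\gamma,|w|})$, the composition $W\circ u$ is a holomorphic map to $\CC$ with $W\circ u(\partial D)\subset \gamma$ whose image avoids $1$. Since $\gamma$ winds once around $1$ (both for Chekanov and product type), the argument principle forces $1$ to be in the image of $W\circ u$ with multiplicity equal to the winding number, contradicting that $u$ maps into $X$. Therefore $W\circ u\equiv z_0$ is constant, and $u$ lands in the fiber $W^{-1}(z_0)\cong \CC^*$ (parameterized by $z_1$). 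The intersection $L^\pm_{\gamma,|w|}\cap W^{-1}(z_0)$ is a circle $\{|z_1|=r_0\}$ in $\CC^*$ winding once around $0$, so another application of the argument principle to $u$ (now regarded as a holomorphic map to $\CC$ with boundary on this circle) forces $0$ to lie in the image of $u$, contradicting $u\subset \CC^*$. Hence no non-constant holomorphic disk exists.

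The main subtlety is the $2\pi i\ZZ$ bookkeeping in (A), where the $\exp$ in the definition of $[L^\pm_{\cdot,\cdot}]_{\Flux}$ is what makes the flux well-defined modulo the choice of reference path and modulo the lattice of unitary local systems; I expect this to unwind cleanly once one identifies the ambiguity with a modification of $b$ by an element of $H^1(L;2\pi i\ZZ)$. The product case is verbatim identical, replacing $(c_u,c_w)$ by $(c_r,c_s)$, and the holomorphic disk argument in (B) is uniform across Chekanov and product type since it uses only that $\gamma$ winds once around $1$ and the fiber slices wind once around $0$ in $\CC^*$.
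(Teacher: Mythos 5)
The paper never proves this claim: it appears in the expository review of \cite{auroux2007mirror} and is stated without argument, so there is no internal proof to compare against. Your two-part argument is the standard one from the literature and is essentially correct. Part (B) is exactly Auroux's argument: since $W$ is holomorphic and $\gamma$ winds once around $1$, the open mapping theorem forces $W\circ u$ to be constant (a nonconstant map would have image equal to the bounded component of $\CC\setminus\gamma$, which contains $1$), and the same argument in the fiber kills $u$ itself. Two small points are worth making explicit. First, the fiber step uses $z_0\neq 0$, i.e.\ that $\gamma$ avoids the origin; this holds by definition for Chekanov and product type curves, and it is precisely where the argument fails on the wall --- consistent with the disk $u_{wc}$ over the nodal fiber that the paper finds in \cref{subsubsec:computetransition}. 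Second, your appeal to the argument principle tacitly uses that a nonconstant holomorphic map has positive boundary degree; phrasing it via the open mapping theorem avoids that loose end.

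For part (A), the forward direction and the $2\pi i\ZZ$ bookkeeping are fine. The one step you gloss over is the last one: the Moser/Weinstein correction that converts a zero-flux Lagrangian isotopy into a Hamiltonian one produces Lagrangians that are graphs of small closed one-forms over the $L_t$, and these need not be of the literal fibered form $L^\pm_{\gamma,|w|}$, so "staying within Chekanov tori" does not follow just from path-connectedness of the parameter space. A cleaner route that avoids this entirely: given two parameter values with equal flux, connect them by a path \emph{inside the parameter space of fibered tori} along which the flux is constant (possible because the fibers of the flux map on parameter space are connected, by a Moser argument on the curves $\gamma$ with fixed enclosed areas); an isotopy of fibered tori with constant flux has $[\iota_{\dot L_t}\omega]=0$ for every $t$ and is therefore already Hamiltonian, with no correction needed. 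With that adjustment your proof is complete.
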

The classes $[L_{u, w}^-]_{\Flux}$ and $[L_{r, s}^+]_{\Flux}$ allow us to use the coordinates $(u, w)$ and $(r, s)$ to parameterize charts on the moduli space Lagrangian tori.
We will frequently refer to a specific representative of each class as $L_{u, w}^-$ or $L_{r, s}^+$.
Notice that each Lagrangian torus $L_{\gamma_r, |w|}$  from \cref{fig:wallcrossing} is either of Chekanov type,  product type, or lies on a ``wall'' between these two types of Lagrangian tori.
If $L_{\gamma_r}$ is not on the wall, then  $L_{\gamma_r, |w|}$ belongs to a distinct $[L^\pm]_{\Flux}$ class.  
 
	For purposes of exposition, we will assume for the remainder of this discussion that we can work safely with complex coefficients and avoid convergence issues. 
	Consider the identification between bounding cochains and local systems:
		\begin{equation}
			(L^+_{r, s}, a \cdot c^{r}+ b\cdot c^s) \mapsto (L^+_{r\cdot \exp( a), s\cdot \exp (b)},0)
		\label{eq:wishful}
	\end{equation}
	When the underlying chain model for the Lagrangian is the de Rham complex, this identification is known: we first learned of this in \cite[Lemma 4.1]{auroux2009special} and \cite{fukaya2010cyclic} constructs a version of Lagrangian Floer theory based on the de Rham complex for which this holds.
	We will assume that the same identification holds in the pearly model. 
	\begin{assumption}
		The identification given in \cref{eq:wishful} holds for the pearly model.
	\end{assumption}
	To justify the appearance of the exponential in the pearly-model, one needs to compute the equivalence between the flow-tree model for Morse theory and the de Rham model; an equivalence between these two chain models is exhibited in \cite{kontsevich2001homological}.
	\subsection{Correction from Lagrangian cobordism}
	\label{subsec:monotonecrossing}
	\label{sec:examples}
	
The main computation for this example is the following sharpening of the minimal energy disk requirement of \cref{cond:isolatedmutation}, which was necessary to apply our wall-crossing computation, to the specific example of $K_{\mu_D}.$
We note that there is a relation between the flux of the surgeries used to construct the cobordism, the flux relative boundary of the isotopy $\theta_{H_t}$ used to construct $K_c$ from \cref{claim:cobordismconstruction}, and the area of the holomorphic disk from \cref{prop:exceptionaldisk}. 
Let $\width(L_\downarrow \# L_\uparrow)$ be the flux of the Lagrangian isotopy between the surgery and the immersed Lagrangian $L_{\downarrow}\cup L_{\uparrow}$. 
We arrange for $L_{\downarrow}\cup L_{\uparrow}\cup L_c$ to be exactly homotopic to $L_{\gamma_0, 1}$, the standard Whitney sphere from which we compute our flux coordinates.
Define the quantities: 
    \begin{align*}
    u_1=\exp(-\width_{L_\downarrow\# L_\uparrow}) && u_2= \exp(\Flux_{\theta_{H_t}(L_c)}(\ell_c))\\
    s=\exp(\width_{L_\uparrow\# L_\downarrow})&&
    z=\exp\left(\int_{\partial u_{ex}}b+\int_{u_{ex}}\omega\right)
    \end{align*}
    where $\ell_c$ is the list of the curve $\ell_c$ from \cref{fig:cobordismpieces} to the $L_c$ chart. 
    From the definition of flux and surgery width, we see that the product side of the mutation cobordism $K_{\mu_D}$ is the Lagrangian $L^+_{s,s}$.
    The flux between the Chekanov end of the mutation cobordism and the Whitney sphere is the amount swept out by the surgery neck, along with the flux relative boundary of the component $L_c$. Write $u=u_1\cdot u_2$ so that the Chekanov end of the mutation cobordism is $L^-_{u,1}$
     From relations in the homology of $K_{\mu_D}$, we observe that $u_2=(u_1)^{-1}\cdot s\cdot z$, which allows us to reexpress
    \begin{equation}
        s=u/z.
    \label{eq:modulicoordinates}
    \end{equation}
\begin{prop}
    For the standard choice of complex structure, the only pseudoholomorphic disk with boundary on $K_{\mu_D}$ is the one described in \cref{prop:exceptionaldisk} and its multiple covers. 
    \label{prop:uniquedisk}
\end{prop}
\begin{proof}
    We show that this disk is unique. 
    We look at the Lagrangian cobordism $K_{\mu_D}\subset X\times \CC$ under the projections $\pi_X: X\times \CC\to X$, $\pi_\CC:X\times \CC\to \CC$ and $W: X\to \CC$. 
    The first projection that we look at is $W\circ \pi_X: K_{\mu_D}\to \CC$.
    The regions where we have performed Hamiltonian isotopy and surgery sweep out flux corresponding to shaded regions in the projection \cref{fig:restrictingdisks}.
    We let $U$ be the neighborhood drawn in \cref{fig:restrictingdisks}.
    Every disk with boundary on $K_{\mu_D}$ must either have boundary contained within the red region $U$, or will have boundary completely disjoint from the red region $U$ by the open mapping principle. 

    \begin{figure}
        \centering
        \begin{tikzpicture}
	\begin{scope}[shift={(5,0)}]
		\draw  (2,2) rectangle (-2,-2);
		\node at (0.5,0) {$\times$};
		\draw (0.5,1) .. controls (0.5,0.5) and (0,0.5) .. (0,0) .. controls (0,-0.5) and (0.5,-0.5) .. (0.5,-1);
\node at (0,-2.5) {$W(L^-_{u,0})$};
		\draw (0.5,-1) .. controls (0.5,-1.5) and (-0.5,-1.5) .. (-0.5,-1) .. controls (-0.5,-0.5) and (-1.5,-1) .. (-1.5,0) .. controls (-1.5,1) and (-0.5,0.5) .. (-0.5,1) .. controls (-0.5,1.5) and (0.5,1.5) .. (0.5,1);
	\end{scope}

	\begin{scope}[shift={(0,0)}]
		\draw  (2,2) rectangle (-2,-2);
		\node at (0.5,0) {$\times$};
		\node at (0,-2.5) {$W(L^+_{s, s})$};
		\draw (0.5,1) .. controls (0.5,0.5) and (1,0.5) .. (1,0) .. controls (1,-0.5) and (0.5,-0.5) .. (0.5,-1);
		\draw (0.5,1) .. controls (0.5,1.5) and (-0.5,1.5) .. (-0.5,1) .. controls (-0.5,0.5) and (-0.5,-0.5) .. (-0.5,-1) .. controls (-0.5,-1.5) and (0.5,-1.5) .. (0.5,-1);
	\end{scope}
	\begin{scope}[shift={(10,0)}]
	\fill[red!20] (0,1) .. controls (-0.25,1) and (-0.25,1) .. (-0.25,0.75) .. controls (-0.25,0.5) and (-0.25,-0.5) .. (-0.25,-0.75) .. controls (-0.25,-1) and (-0.25,-1) .. (0,-1) .. controls (0.25,-1) and (0.75,-1) .. (1,-1) .. controls (1.25,-1) and (1.25,-1) .. (1.25,-0.75) .. controls (1.25,-0.5) and (1.25,0.5) .. (1.25,0.75) .. controls (1.25,1) and (1.25,1) .. (1,1) .. controls (0.75,1) and (0.25,1) .. (0,1);

	\draw[fill=gray!20] (-0.5,1) .. controls (-0.5,0.5) and (-0.5,-0.5) .. (-0.5,-1) .. controls (-0.5,-0.5) and (-1.5,-1) .. (-1.5,0) .. controls (-1.5,1) and (-0.5,0.5) .. (-0.5,1);
\draw[fill=gray!20] (0.5,1) .. controls (0.5,0.5) and (1,0.5) .. (1,0) .. controls (1,-0.5) and (0.5,-0.5) .. (0.5,-1) .. controls (0.5,-0.5) and (0,-0.5) .. (0,0) .. controls (0,0.5) and (0.5,0.5) .. (0.5,1);

\draw (0.5,1) .. controls (0.5,0.5) and (1,0.5) .. (1,0) .. controls (1,-0.5) and (0.5,-0.5) .. (0.5,-1);
		\draw (0.5,1) .. controls (0.5,1.5) and (-0.5,1.5) .. (-0.5,1) .. controls (-0.5,0.5) and (-0.5,-0.5) .. (-0.5,-1) .. controls (-0.5,-1.5) and (0.5,-1.5) .. (0.5,-1);

		\draw  (2,2) rectangle (-2,-2);
		\node at (0.5,0) {$\times$};
		\draw (0.5,1) .. controls (0.5,0.5) and (0,0.5) .. (0,0) .. controls (0,-0.5) and (0.5,-0.5) .. (0.5,-1);

		\node at (0,-2.5) {$W\circ \pi_X(K_{\mu(D)})$};
		\draw (0.5,-1) .. controls (0.5,-1.5) and (-0.5,-1.5) .. (-0.5,-1) .. controls (-0.5,-0.5) and (-1.5,-1) .. (-1.5,0) .. controls (-1.5,1) and (-0.5,0.5) .. (-0.5,1) .. controls (-0.5,1.5) and (0.5,1.5) .. (0.5,1);
	\end{scope}
\node at (11,0.75) {$U$};
\end{tikzpicture}         \caption{Disks in the Lagrangian cobordism $K_{\mu_D}$ must either have boundary contained inside of the red region, or outside of the red region.}
                \label{fig:restrictingdisks}

    \end{figure}
    We now look at $K_{\mu_D}\cap (W\circ \pi_X)^{-1}(U)$. This is the eye-shaped cobordism from \cref{fig:catseye}.
    By \cref{claim:onlydiskoneye} the only disks which appear here are $u_{ex}$ and its multiple covers.
    The complement of the region given by $U$ cannot bound holomorphic disks for topological reasons.
    This characterizes the disks which may appear on $K_{\mu_D}$. 
\end{proof}
\begin{corollary}
    The mutation cobordism $K_{\mu_D}:L^-_{u, 1}\rightsquigarrow L^+_{s,s}$ is an isolated mutation.
\end{corollary}

\subsubsection{Wall-Crossing computation}

\label{subsec:wallcrossingformulafromcobordism}
We now compute $\pi_*^\pm( d_\epsilon)$ from \cref{thm:wallcrossingcobordism}. 
From our discussion on orientations, we have that the Lagrangian cobordism $K_{\mu_D}$ does not identify $L^-_{u,1}$ with $L^+_{s, s}$ with the standard choices of spin structures.
We therefore will denote the ends of the cobordism by $\mathring L^-_{u,1}$ and $\mathring L^+_{s,s}$ to signify that these Lagrangians have a different spin structure than the Lagrangians previously considered.
This is important as Lagrangians equipped with different spin structures can represent distinct objects in the Fukaya category.
We choose the Morse functions for the $\mathring L^-_{u,1}$ and $\mathring L^+_{s,s}$  matching the one chosen in \cref{subsubsec:choiceofcoordinates} so that the coordinates for the moduli spaces can be identified with our previous computation.

The vanishing cycle on the Chekanov side is $c^u$, while the vanishing cycle on the product side is $c^r+c^s$. 
To construct the continuation map from a mutation cobordism, we must pick a splitting of the vector spaces $E^\pm$ from \cref{subsub:Morsefunction}. This involves picking classes $c^{w\pm}$ as in \cref{rem:choiceofsplitting}
On the Chekanov side we take the class $c^{w-}:=c^w$ and on the product side we choose the class $c^{w+}:=c^r$.

With this choice of Morse function, we show with \cref{assumption:singlediskexistence} the curvature term is:
\begin{equation}
m^0_{K_{\mu_D}}= \log(1+z)(x^++x^-).
\label{eq:curvatureterm}
\end{equation}
\begin{assumption}
    For each $k$, there exists a domain-dependent perturbation $J_z^k: D^2\to \End(TX\times \CC)$ so that there is a unique $J_z^k$ pseudoholomorphic disk $u_k: (D^2, \partial D^2)\to (X\times \CC, K_{\mu_D})$ with the property that $[u_k]=k[u]$.
    \label{assumption:singlediskexistence}

\end{assumption}
\begin{proof}[Proof of \cref{eq:curvatureterm} given \cref{assumption:singlediskexistence}]
    The $A_\infty$ product structure defined in \cite{charest2015floer} is defined by the count of adapted treed holomorphic disks weighted by the number of interior leaves.
    The curvature term, up to sign, is given by
    \[m^0= \sum_{x_0, u\in \mathcal M(X, L, D, x_0)_0} \pm  (\sigma(u)!)^{-1} T^{\omega(u)} x_0\]
    where the $\pm$ sign is determined by the orientation of the moduli space, and $\sigma(u)$ is the number of interior leaves.

    Consider the preimages  of the stabilizing divisor $\{z_1, \ldots z_k\}=u^{-1}_k(D^2)\subset D^2$. We now define a domain-dependent perturbation for the combinatorial type $\underline \Gamma^k$ which has 1 disk with $k$-interior marked points and 1 output flow-line.
    Each $C\in \mathcal M(\underline \Gamma^k)$ with disk component $D^2_C$ is labeled with interior marked points $p_1, \ldots p_k$ and attachment point $s_0\in \partial D^2_C$ for the flow line. 
    For each disk, there exists a unique automorphism $\phi_C:D^2_C\to D^2$ sending 
    \begin{align*}
        \phi_C(p_1)=z_1 && \phi_C(s_0)=1.
    \end{align*}
    We define the perturbation of almost complex structure $\mathcal P(\underline \Gamma^k): \mathcal U\to TX$ by pullback of the almost complex structure from \cref{assumption:singlediskexistence},  
    \[\mathcal P(\underline \Gamma^k)(C, z)=J_{\phi_C(z)}^k\]
    By \cref{assumption:singlediskexistence}, for this choice of perturbation,  there exists a unique pseudoholomorphic treed disk $u_C: C\to X$ with boundary on $L$ and flow line limiting to $x_0$.
    However, most of these disks are not adapted to the divisor $D$, as there is no reason that $u^{-1}_C(D)=\phi^{-1}(\{z_1, \ldots, z_k\})$ should be the same as the set of marked points $p_1, \ldots, p_k$.
    In fact, we are only guaranteed that $\phi^{-1}(z_1)=p_1$ by construction.
    
    The only difference between the domains $C$ are the positions of the marked points, and so the moduli space of $\mathcal P$-perturbed $D$-adapted pseudoholomorphic treed disks are in bijection with the domains for which $\phi^{-1}(\{z_1, \ldots z_k\})=\{p_1, \ldots, p_k\}$.
    There are $(k-1)!$ such domains (as $p_1$ must be mapped to $z_1$, and the remaining $k-1$ marked points can be chosen freely).

    This means that (given \cref{assumption:singlediskexistence}), there exist choices of perturbation datum so that $\#\mathcal M_{\mathcal P}(X, L, D, \underline \Gamma^k)=(k-1)!$.
    This gives us ($\mod 2$) the formula for multiple covers, and (up to a sign) 
    \[\langle m^0, x_0 \rangle=\sum \frac{(k-1)!}{k!} T^{k\cdot \omega [u]}= \log(1+z).\]
\end{proof}
The deformation from  \cref{claim:boundingcochainexistence} can be extended to a bounding cochain:
\[
    b= \log(1+z) \cdot c^w+ \log(1+z)\cdot c^{s}
\]
By restricting $b_\epsilon$ to the ends of the cobordism, we obtain a continuation map between $\CF_{b^-} (\mathring L^-_{u, 1})\to \CF_{b^+} (\mathring L^+_{s,s})$. 

If we assume convergence over complex coefficients and identify the bounding cochain with a local system, we obtain a correspondence between Lagrangians\begin{align*}
    (\mathring L^-_{u, 1+z}) \sim  (L^-_{u,1} ,\log(1+z) \cdot c^w))\sim&(\mathring L^+_{s, s},\log(1+z)\cdot c^{s})\sim (\mathring L^+_{s, s(1+z)})\\
    \intertext{Setting $w=(1+z)$ and noting that $s=u/z$ from \cref{eq:modulicoordinates}}
    (\mathring L^-_{u, w})\sim & \mathring L^+_{u/(w-1), uw/(w-1)}
\end{align*}
This closely matches the identification from \cite{auroux2007mirror}, with the only difference being the signs.
This discrepancy can be explained by the spin structures on the Lagrangian cobordism $K_{\mu_D}$.
As the Lagrangian cobordism $K_{\mu_D}$ admits an embedding into $\RR^3$, it has a trivial tangent bundle and is therefore spin. However, there is no spin structure on  $K_{\mu_D}$ which restricts to the standard spin structures \cite{cho2004holomorphic} on both $L^{\pm}$.   

	\appendix
	
\section[Mapping cylinders for curved A-infinity algebras]{Mapping cylinders for curved $A_\infty$ algebras}
	\label{app:homological}
		\subsection{A brief review of filtered $A_\infty$ algebras and homomorphisms. }
We will use notation for filtered $A_\infty$ algebras and homomorphisms following \cite{fukaya2010lagrangian}.
We make some modifications to extend to the setting of weakly-filtered $A_\infty$ algebras and homomorphisms.
\begin{notation}
	We will sometimes write the composition $f\circ g$ as $\begin{bmatrix} g\\f\end{bmatrix}$. This both saves space, and makes some of the algebraic manipulations easier to follow.
\end{notation}
A \emph{ filtered $A_\infty$ algebra}  $(A^\bullet, m^k)$ is a free graded $\Lambda$-vector space $A^\bullet$ with $\Lambda$-linear cohomologically graded higher products for each $k \geq 0$
\[
	m^k:(A^\bullet)^{\tensor k}\to (A^{\bullet+2-k})
\]
with extra data and satisfying the filtered $A_\infty$ algebra axioms given in \cite[Definition 3.2.20]{fukaya2010lagrangian}.
The key relation is the quadratic $A_\infty$ relation, which we will denote by:
\[
	\sum_{k_1+k'+k_2=k} (-1)^{\clubsuit(\underline x, k_1)} \begin{bmatrix}(\id^{\tensor k_1}\tensor m^{k'}\tensor id^{\tensor k_2})\\ (m^{k_1+1+k_2})\end{bmatrix}(x_1, \cdots ,x_k)
\]
where the sign is determined by the cohomological grading of the domain
\[\clubsuit(\underline x,k_1):= k_1+\sum_{j=1}^{k_1} \deg(x_j).\]
\begin{notation}
	We will suppress $\underline x =(x_1, \cdots, x_k)$ in future computations, but continue to use the notation $\clubsuit(\underline x, i)$ for the purposes of determining signs.
\end{notation}
If $m^0=0$, then $(A^\bullet, m^1)$ is a chain complex and we say that $A^\bullet$ is \emph{uncurved} or \emph{tautologically unobstructed}.
There is a filtration map $\fil: A^\bullet\to \RR$ which is defined by the smallest $\lambda$ such that $v\in F^\lambda A$. Up to a change of sign convention, this makes $A$ a non-Archimedean normed vector space in the sense of \cite[Definition 1.2.3]{usher2016persistent}.
The filtration zero portion of products $\underline{m}^k: A^{\tensor k}\to A$ are a tautologically unobstructed $A_\infty$ algebra. 

We from now on suppress the cohomological index and, when the product structure is clear, we will simply notate an $A_\infty$ algebra by $A$. 
An \emph{ideal} of $A$ is a subspace $I\subset A$ so that for every $b\in I$ and $a_1, \ldots, a_{k-1}\in A$, 
	\[
		m^k(a_1\tensor\cdots \tensor  a_j\tensor b \tensor a_{j+1}\tensor \cdots \tensor a_{k-1})\in I.
	\]
Note that we \emph{do not require} $m^0\in I$. 
The set $A_{> 0}$ of elements with positive filtration is an $A_\infty$ ideal and $A/A_{\geq 0}$ is a tautologically unobstructed $A_\infty$ algebra.

We take a variation of \cite[Definition 5.2.1]{fukaya2010lagrangian} and adapt it to \cite[Definition 3.2.7]{fukaya2010lagrangian}
\begin{definition}
	Let $(A, m_A^k)$ and $(B, m_B^k)$ be $A_\infty$ algebras. A \emph{weakly-filtered $A_\infty$ homomorphism} from $A$ to $B$ is a sequence of	 graded maps
	\[
		f^k:A^{\tensor k}\to B
	\]
	satisfying the following conditions:
	\begin{itemize}
		\item
		      \emph{Weakly Filtered:} The maps nearly preserve energy
		      \[
			      f^k(F^{\lambda_1}A, \cdots , F^{\lambda_k}A)\subset F^{-c\cdot k+\sum_{i=1}^k \lambda_i}B
		      \]
		      for some fixed constant $c$ called the \emph{energy loss of $f$} with $c<\fil(m^0_A)$. 
		\item
			\emph{Quadratic $A_\infty$ relations:} The $f^k, m^k_A$ and $m^k_B$ mutually satisfy the quadratic filtered $A_\infty$ homomorphism relations
			\[\sum_{k_1+k'+k_2=k}(-1)^{\clubsuit(\underline x, k_1)}\begin{bmatrix}
				\id^{\tensor k_1}\tensor m^{k'}_A \tensor \id^{k_2}\\
				f^{k_1+1+k_2}\end{bmatrix}
				=\sum_{i_1+\cdots i_j=k}\ \begin{bmatrix}f^{i_1}\tensor \cdots \tensor f^{i_j}\\m^j_{B} \end{bmatrix}.
\]
	\end{itemize}
	\label{def:ainfinityhomomorphism}
\end{definition}
The composition of weakly-filtered $A_\infty$ homomorphisms is again a weakly filtered $A_\infty$ homomorphism.

We follow \cite[Definition 3.6.9]{fukaya2010lagrangian}  and write  $m^k_a: A^{\tensor k}\to A$ for the deformation of the $A_\infty$ structure at a element $a\in A$ for positive valuation. We say that $b$ is a bounding cochain if $(A^\bullet, m_b)$ is tautologically unobstructed, and write 
\[
	\mathcal MC_{> c}(A) := \{b \in A \;:\; m^0_b=0, \fil(b)>c\}.
\]
for the space of Maurer-Cartan elements of filtration at least $c$ \cite[Definition 3.6.4]{fukaya2010lagrangian}. 
Given $f: A\to B$  a weakly filtered $A_\infty$ morphism of energy loss $c$, there exists a pushforward map $f_*: \mathcal MC_{> c}(A) \to \mathcal MC_{>0}(B)$. 

It will be convenient for us to use the following notations:
\begin{claim}
	Let $f: A\to B$ be a filtered $A_\infty$ algebra morphism.
	Then there exists an $A_\infty$ homomorphism
	\[ f_{\flat}: (A, m_A)\to (B, (m_B)_{f_*(0)})\]
	where $f_{\flat}$ is defined
	by
	\[
		f^{k}_\flat = \left\{ \begin{array}{cc} f^k& \text{for $k>0$}\\ 0 &\text{if $k=0$}\end{array} \right.
	\]
	\label{claim:fflat}
\end{claim}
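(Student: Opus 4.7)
The plan is to verify the $A_\infty$ morphism relations for $f_\flat$ by a direct combinatorial rewriting of the relations for $f$.  First I would note that $f_\ast(0)=\sum_k f^k(0^{\tensor k})=f^0$, so $(B,(m_B)_{f^0})$ is precisely the filtered $A_\infty$ algebra produced by applying the pushforward lemma to the zero element of $A$.  The required positivity of $\val(f^0)$, which is what makes the deformed products $(m_B)^k_{f^0}$ converge $T$-adically, follows from combining the weak-filtration bound with the $k=0$ morphism relation $f^1(m_A^0)=\sum_n m_B^n((f^0)^{\tensor n})$, whose LHS has positive valuation by the hypothesis $c<\val(m_A^0)$ together with the fact that $m^0_B$ already lies in positive filtration.

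The main step is an algebraic identity between the two sides of the morphism relation.  The $A_\infty$ morphism relation for $f$ with $k$ inputs reads
\[\sum_{j_1+j+j_2=k} f^{j_1+1+j_2}(\id^{\tensor j_1}\tensor m_A^j\tensor \id^{\tensor j_2}) \;=\; \sum_{l\geq 1,\,i_1+\cdots+i_l=k,\,i_r\geq 0} m_B^l(f^{i_1}\tensor\cdots\tensor f^{i_l}).\]
On the LHS, every term involves $f^{j_1+1+j_2}$ of arity at least one, so replacing $f$ by $f_\flat$ leaves the LHS untouched.  On the RHS, I would partition each tuple $(i_1,\ldots,i_l)$ by the positions of its zero entries: a zero entry is an ``insertion of $f^0$,'' while the nonzero entries form a positive sub-tuple $(i_1',\ldots,i_{l'}')$ with each $i_r'\geq 1$.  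Grouping the sum by this sub-tuple, the inner sum over the $\binom{l'+n}{n}$ ways of interleaving $n$ copies of $f^0$ among the $l'$ ordered slots is exactly the combinatorial expansion $\sum_n m_B^{l'+n}((\id\oplus f^0)^{{n+l'\choose n}_{f^0}})(f^{i_1'}\tensor\cdots\tensor f^{i_{l'}'})$, which is by definition $(m_B)^{l'}_{f^0}(f_\flat^{i_1'}\tensor\cdots\tensor f_\flat^{i_{l'}'})$.  Hence the RHS equals the RHS of the $A_\infty$ morphism relation for $f_\flat:(A,m_A)\to(B,(m_B)_{f^0})$.

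The tree picture of \cref{subsec:ainftymorphism} makes this bijection essentially tautological: the trees contributing to the RHS for $f$ have root vertex labeled $m_B^l$ with every other internal vertex labeled $f^i$, and contracting each $f^0$-vertex (an internal vertex with no semi-infinite leaves above it) into the root vertex produces exactly the trees computing $(m_B)_{f^0}$ on the trees whose remaining vertices are labeled $f^{i_r'}$ with $i_r'\geq 1$.  I do not expect a real obstacle in executing this plan; the only mildly subtle check is the $k=0$ case, which reduces to the identity $f^1(m_A^0)=\sum_n m_B^n((f^0)^{\tensor n})=(m_B)^0_{f^0}$.  This is simultaneously the $k=0$ morphism relation for $f$ and the condition for the pushforward of the zero deformation parameter to play the role of a deformation datum on the target, so it holds automatically.
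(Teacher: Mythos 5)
Your main argument is correct, and it is essentially the intended one: the paper states \cref{claim:fflat} without proof, but the companion claim immediately following it (producing $f_a\colon(A,(m_A)_a)\to(B,m_B)$ from a deformation of the \emph{source}) is proved by exactly the same term-by-term regrouping of the quadratic morphism relation, with the roles of source and target interchanged. Your partition of the tuples $(i_1,\ldots,i_l)$ according to the positions of their zero entries correctly converts $\sum m_B^l(f^{i_1}\tensor\cdots\tensor f^{i_l})$ into $\sum (m_B)^{l'}_{f^0}(f_\flat^{i'_1}\tensor\cdots\tensor f_\flat^{i'_{l'}})$, and the left-hand sides of the two relations agree verbatim because every $f$ appearing there has arity at least one.

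One step of your plan does not go through as written: the derivation of $\val(f^0)>0$ from the $k=0$ morphism relation. From $f^1(m_A^0)=m_B^0+m_B^1(f^0)+m_B^2(f^0\tensor f^0)+\cdots$, with the left side and $m_B^0$ of positive valuation, one cannot conclude that $f^0$ itself has positive valuation: $f^0$ could a priori be a valuation-zero element killed to leading order by $m_B^1$, with the higher terms still summing to something of positive valuation. The positivity of $\val(f^0)$ — which is genuinely needed for the deformed products $(m_B)^k_{f^0}$ to converge, since the paper's definition of a deformation requires the deforming element to have positive valuation — should be taken as part of the definition of a filtered morphism (mirroring the requirement $\val(m^0)>0$ for a filtered $A_\infty$ algebra), not derived; the weak-filtration bound in the paper's definition only yields $\val(f^0)\geq 0$. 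With that hypothesis made explicit, the rest of your argument is complete.
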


\begin{claim}
	Let $f: A\to B$ be a filtered $A_\infty$ algebra morphism.
	Let $a\in A$ be a deforming element.
	Then there is a map
	\[
		f_a: (A, (m^k_A)_a)\to (B, m^k_B).
	\]
\end{claim}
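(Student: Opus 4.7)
The plan is to imitate the construction of $m^k_a$: define
\[
f_a^k(x_1, \ldots, x_k) := \sum_{n \geq 0} f^{k+n}\left((\id \oplus a)^{\binom{n+k}{n}_a}(x_1 \tensor \cdots \tensor x_k)\right),
\]
that is, insert the deforming element $a$ in every possible way among the inputs of $f$ and sum. Convergence at each bounded valuation is automatic because $a$ has strictly positive valuation and $f$ is weakly filtered, so only finitely many $n$ contribute below any valuation threshold. Note that at $k = 0$ the formula reads $f_a^0 = \sum_n f^n(a^{\tensor n}) = f_*(a)$, so $f_a$ is genuinely curved even when $f$ was uncurved; its curvature will absorb the discrepancy between $(A, (m_A)_a)$ and $(B, m_B)$ via the curved $A_\infty$ homomorphism relations, in which $m^0_B$ together with sums of the form $m^l_B(f_*(a)^{\tensor l})$ appear on the right-hand side.

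To verify the quadratic $A_\infty$ homomorphism relations, I would expand both
\[
\sum_{j_1+j+j_2 = k} f_a^{j_1+1+j_2}\bigl(\id^{\tensor j_1} \tensor (m^j_A)_a \tensor \id^{\tensor j_2}\bigr) \quad \text{and} \quad \sum_{i_1+\cdots+i_l = k} m^l_B\bigl(f_a^{i_1} \tensor \cdots \tensor f_a^{i_l}\bigr)
\]
into the tree calculus of \cref{subsec:ainftymorphism}. After expansion, each term on either side is a planar rooted tree whose external leaves are $x_1, \ldots, x_k$ together with some number of internal $a$-leaves, and which has one distinguished vertex labeled by a structure map of $A$ or $B$. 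The proof reduces to the assertion that, for each fixed total placement of $N$ copies of $a$ among the inputs, the $A_\infty$ homomorphism relation for $f$ applied to the resulting $(k+N)$-tuple is an equality; summing these equalities over $N$ and over the orderings of the interleaved $(x, a)$-inputs then produces exactly the two sides of the proposed relation for $f_a$.

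The main obstacle is the bookkeeping identifying the two tree-sums. On the right-hand side one must allow $f$-subtrees that consume only $a$-leaves; these produce the $f_a^0 = f_*(a)$ inputs to $m^l_B$ which, collected across all $l$, recover the effective curvature coming from the deformation of the target. On the left-hand side, one must allow the distinguished $m_A$-vertex to have degree $j = 0$ so that the $(m^0_A)_a$ curvature of the source appears correctly as an insertion into $f_a$. Once one sorts the trees according to which internal vertices correspond to external-input-consuming $f$'s and which to pure $a$-subtrees, every tree appears with the multiplicity dictated by the multinomial factors hidden in $(\id \oplus a)^{\binom{n+k}{n}_a}$, and signs reduce to the standard $\clubsuit$-convention with $a$ placed in cohomological degree $1$. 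Modulo this combinatorial matching, the identity is a term-by-term consequence of the $A_\infty$ homomorphism relation for $f$, which is given.
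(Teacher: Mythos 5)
Your proposal is correct and takes essentially the same route as the paper: you define $f_a^k = \sum_n f^{k+n}\circ(\id\oplus a)^{\binom{n+k}{n}_a}$, which is exactly the paper's formula, and verify the quadratic relations by interleaving $a$'s into the inputs, invoking the $A_\infty$ homomorphism relation for $f$ at level $k+n$, and regrouping the $a$-insertions into the inner $(m_A)_a$ and the outer $f_a$'s — precisely the chain of equalities in the paper's computation, with your tree-sorting playing the role of its index reshuffling. Your observation that $f_a^0 = f_*(a)$ makes the morphism curved is also consistent with the paper's subsequent identity $f_*(b)=(f_b)_*(0)$.
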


\subsection{Curved Homotopy Transfer Theorem}
\begin{theorem}[Weakly Filtered Whitehead Theorem]
	Let  $A$ and $B$ be filtered $A_\infty$ algebras. 
	Suppose that we have maps :
	\[\setlength\mathsurround{0pt}\begin{tikzcd}[ampersand replacement=\&]
		A\arrow[shift left =.75ex]{r}{\alpha} \&B \arrow[shift left =.75ex]{l}{\beta^k} \arrow[out=30,in=330,loop,swap, "h"]
		\end{tikzcd}\setlength\mathsurround{.8pt} \]
	so that $\beta^k$ is an $A_\infty$ homomorphism, with $\beta^k=0$ for all $k\neq 1$, and the following conditions hold

			\begin{align}
				\beta \circ m^1_B-m^1_A\circ  \beta=& \beta \circ m^1_B \circ m^1_B \circ h 				\label{eq:pichainmap}\\
				\alpha\circ m^1_A-m^1_B\circ     \alpha=&h\circ m^1_B\circ m^1_B\circ \alpha 				\label{eq:ichainmap}\\
				h \circ m^1_B + m^1_B \circ h=& \alpha\circ \beta - \id- h \circ m^1_B\circ m^1_B\circ h	\label{eq:homotopyatcrit}
			\end{align}
			\begin{align*}
									 \beta\circ h =0 &&
									h \circ\alpha= 0 \\
									\beta \circ\alpha= \id_A&&
									\fil(h\circ m^0_B)>0
			\end{align*}
	Then we can extend
	\begin{itemize}
		 \item  $\alpha$ to a weakly filtered $A_\infty$ morphism $\alpha^k: A^{\tensor k}\to B.$
		 \item  $h$ to a weakly filtered $A_\infty$ homotopy $h^k: B^{\tensor k}\to B$.
	\end{itemize}
	\label{thm:curvedhtt}
\end{theorem}
	Note that this relates to the homotopy transfer theorem of \cite{kadeishvili1980homology} when $m^0_B=0$, as \cref{eq:pichainmap,eq:ichainmap,eq:homotopyatcrit} become the chain map and homotopy relations. 
	The additional relations are analogues of the strong deformation retract conditions.
	In the filtered setting they tell us that $\beta, \alpha$ intertwine the curvatures of $m^1_B$ and $m^1_A$:
	\[\beta \circ m^1_B\circ m^1_B\circ \alpha = m^1_A \circ m^1_A.\]

	\Cref{thm:curvedhtt} is also related to \cite[Theorem 4.2.45]{fukaya2010lagrangian}, which proves that if $\alpha, \beta$ are filtered $A_\infty$ homomorphisms, and  $h$ is filtered, then $h$ extends to an $A_\infty$ homotopy between $\alpha, \beta$ whenever \cref{eq:pichainmap,eq:ichainmap,eq:homotopyatcrit} hold at zero valuation.
	We now examine the possibility of adapting \cite[Theorem 4.2.45]{fukaya2010lagrangian} with the weaker condition that $h$ is weakly-filtered.

	The proof of \cite[Theorem 4.2.45]{fukaya2010lagrangian} first proves the theorem for unfiltered $A_\infty$ algebras. The proof for the unfiltered $A_\infty$ algebra proceeds by constructing the maps $m^k_\alpha: A^{\tensor k} \to A$ inductively over the index of $k$.
	This proof necessarily uses the fact that $f^0$ and $m^0_A$ are trivial, as otherwise higher terms $f^{k+i}$ show up in the $k$-input quadratic $A_\infty$ relations.

	The filtered case is proven by induction on the filtration, with the unfiltered setting serving as a base case.
	Two difficulties occur when extending to weakly filtered homotopies.
	In \cite[Theorem 4.2.45]{fukaya2010lagrangian} the definition of homotopy is given in terms of filtered $A_\infty$ homomorphism $h: A\to \mathfrak A$, where $\mathfrak A$ is a model of $[0, 1]\times A$.
	One can generalize these to weakly-filtered $A_\infty$ homotopies by considering weakly-filtered homomorphisms $h: A\to \mathfrak A$ and weakly filtered $A_\infty$ structures. Difficulties with these approaches are commented on in the discussion surrounding \cite[Theorem 5.2.35]{fukaya2010lagrangian}, where a similar statement is considered for weakly filtered maps of $A_\infty$ bimodules. 

	Additionally, when $h$ is only weakly filtered, we can no longer use the proof strategy for \cite[Theorem 4.2.45]{fukaya2010lagrangian} as $h$ no longer provides a weak homotopy between $\alpha \circ \beta$ and $\id_B$ on the tautologically unobstructed $A_\infty$ algebra $B/B_{>0}$. A possible approach to the weak-homotopy case is given in \cite[Section 7.2.6]{fukaya2010lagrangian}, which examines $G$-filtered $A_\infty$ algebras.
	 A $G$-filtered $A_\infty$ structure \cite[Definition 7.2.67]{fukaya2010lagrangian}, is a collection of $m^{k, \beta}: A^{\tensor k}\to A$ indexed by $k\in \NN$ and $\beta$ in a monoid $G$. The monoid $G$ should have a unique invertible element. We also require that every element of $G$ possess only finite factorization, that is for all $\beta\in G$,
	\[\|\beta\|:= \sup\{ k \;:\; \beta = \beta_1+\cdots \beta_k = \beta, \beta_i \neq e.\}<\infty\]
	 The $A_\infty$ products are given by
	\[m^k= \sum_{\beta\in G} T^{\lambda(\beta)} m^{k, \beta}.\]
	satisfying the $A_\infty$ which respects the structure of $G$. 
	Roughly, the proof of \cite[Theorem 7.2.72]{fukaya2010lagrangian} inducts on $k$ and $\|\beta\|$, and uses this finite factorization property. 
	In spirit, our approach is much like this one.

	Instead of strengthening the condition on the filtration, we choose instead to strengthen the condition on the hypothesis of the homotopy equivalence (\cref{eq:pichainmap,eq:ichainmap,eq:homotopyatcrit}).
	The upshot is that we can reexpress $m^K_A$ in terms of the maps $\beta, \alpha, m^K_B$ and $h$, allowing us to decompose the map based on the number of $m^0_B$ terms which occur in the expansion. This is used as a replacement for the monoid structure, and we induct on this quantity, rather than the filtration, in our proof.
We now give the construction of the maps in \cref{thm:curvedhtt}. 

\subsection*{Proof of \cref{thm:curvedhtt}}
Our proof is a modification of the inductive proof given in Markl's Christmas carp paper \cite{markl2006transferring}, using the sign conventions from \cite{fukaya2010lagrangian}. 
We include the steps in the induction proof which differ from the proof in the unfiltered setting. Notably, our construction inducts over two variables, which correspond degree filtration and the number of $B$-curvature terms that appear in the $A_\infty$ relations. 
Outside of these changes, the only difference between the weakly-filtered and unfiltered proof is the additional care that must be spent with the ranges of indices due to the presence of curvature terms.
Place a partial order on pairs $(n, k)$ so that $(k', n')\leq (k, n)$ if $n'\leq n$ and $k'\leq k$. Define the maps:
 \begin{align*}
	\alpha^{1, 0}:= \alpha && \alpha^{0,1}=  h\circ m^0_B && m_A^{1,0}:=m^1_A && m_A^{0,1}= \beta\circ m^0_B\\
	\chi^{0,1}=m^0_B&& \chi^{0,0}=0 && \chi^{1,0}=0
\end{align*}
Suppose that for all $(k', n')\leq (k, n)$, we've defined the maps $\alpha^{k', n'}$ and $\chi^{k',n'}$. 
Then inductively define the kernel
\[
	\rho^{k, n}=-\sum_{r\geq 2} \sum_{\substack{k_1+ \ldots+ k_r=k\\n_1+ \cdots + n_r=n\\ (0,0)\leq (n_i, k_i)}}\begin{bmatrix}\alpha^{k_1, n_1}\tensor \cdots \tensor \alpha^{k_r, n_r}\\  m^r_B \end{bmatrix}
\]
 error terms
\[
	\chi^{k, n}= \sum_{r\geq 2} \sum_{\substack{k_1+ \ldots+ k_r=k\\n_1+ \cdots + n_r=n\\ (0,0)\leq (n_i, k_i)}}(-1)^{\clubsuit(\underline x, \sum_{j=1}^{i-1} k_j)}\begin{bmatrix}\alpha^{k_1, n_1}\tensor\cdots \chi^{k_i, n_i} \tensor \cdots \tensor \alpha^{k_r, n_r}\\  h \circ m^r_B \end{bmatrix}
	\]
and maps
	\begin{align*}	
	\alpha^{k, n}=- h\circ \rho^{k,n}
	&&m_A^{k, n}= -\beta\circ \rho^{k, n}.
\end{align*}
The first two  error terms are 
\begin{align}
	\chi^{1,1}= h\circ m^B(m^0_B\tensor \alpha^{1, 0} +(-1)^\clubsuit \alpha^{1, 0} \tensor m^0)= h\circ m^1_B\circ m^1_B\circ \alpha^{1,0}
	\label{eq:firsterrorterm}\\
	\chi^{0,2}= h\circ m^2(\alpha^{0,1}\tensor \chi^{0,1}+(-1)^\clubsuit\alpha^{0,1}\tensor \chi^{0,1})= h\circ m^1_B\circ m^1_B\circ h \circ m^0_B\label{eq:seconderrorterm}
\end{align}

The filtration of these maps are increasing in $n$; as we can prove inductively that
\[\fil(\rho^{k, n})(a_1, \ldots, a_k) \geq n\cdot \fil(m^0)-(n+k)\fil(h)+\sum_{i=1}^k \fil(a_i),\]
and by the condition that $\fil(h\circ m^0_B)>0$ we see that $\sum_{n=0}^\infty \rho^{n, k}$ converges over Novikov coefficients.
Most of the $m^{k, n}_A$ terms end up being trivial, as by applying the $A_\infty$ homomorphism relations for $\beta$ we obtain
\begin{align*}	m^{k, n}= \begin{bmatrix}h\circ \rho^{k_1, n_1}\tensor \cdots \tensor h\circ \rho^{k_r, n_r}\\  \beta\circ m^r_B \end{bmatrix}
	=\begin{bmatrix}h\circ \rho^{k_1, n_1}\tensor \cdots \tensor h\circ \rho^{k_r, n_r}\\  m^r_A\circ (\beta^{\tensor r}) \end{bmatrix}
\end{align*}
which vanishes whenever the composition $\beta\circ h$ appears.
So, this term is zero unless $(k, n)=(k, 0)$.
We can therefore write 
\[m^k_A= m^k_A\circ (\beta\circ \alpha)^{\tensor k}= \beta\circ  m^{k, 0}_B\circ (\alpha)^{\tensor k }= m^{k, 0}_A= \sum_{n} m^{k,n}_A.\]	

We will prove by induction on the product order for $(k, n)$ that the following relation $I(k,n)$ holds:
\begin{equation}\sum_{\substack{k_1+\cdots k_r=k\\ n_1+\ldots n_r=n}}\begin{bmatrix} \alpha^{k_1, n_1}\tensor \cdots \tensor \alpha^{k_r, n_r}\\ m^r_B\end{bmatrix} = \sum_{\substack{j_1+j'+j_2=k\\ n_1+n'=n}}(-1)^{\clubsuit(\underline x, j_1)} \begin{bmatrix} \id^{\tensor j_1}\tensor m_A^{j', n'}\tensor \id^{\tensor j_2}\\ \alpha^{j_1+j_2+1, n_1}\end{bmatrix}- \chi^{k, n}+ \chi^{k, n+1}.
	\label{eq:inductiverelation}
\end{equation}
We note that summing all of the $I(k,n)$ relations for fixed $k$ yields the $A_\infty$ homomorphism relation for $\alpha^k:=\sum_{n} \alpha^{k, n}$.

\begin{proof}
The base cases for our induction are when $(k, n)\in \{(0,0), (0,1), (1,0)\}.$
\begin{itemize}
	\item The relation $I(0,0)$  states 
	\[m^0_B = \chi^{0,1}\]
	\item A computation using the relation \cref{eq:firsterrorterm} shows that  $I(1,0)$ follows from the relation \cref{eq:ichainmap}
	\item A computation using \cref{eq:seconderrorterm} shows that $I(0,1)$ follows from the relation \cref{eq:homotopyatcrit}.
\end{itemize}
Now assume that we've shown that $I(k', n')$ holds for all $(k', n')<(k, n)$. As $(k, n)\neq (0,0), (1, 0), (0,1)$, we know that $r\geq 2$ in the left hand side of $I(k,n)$. We can therefore reexpress the LHS of $I(k,n)$ as
\begin{align*}
	&\sum_{\substack{k_1+\cdots k_r=k\\ n_1+\ldots n_r=n}}\begin{bmatrix} \alpha^{k_1, n_1}\tensor \cdots \tensor \alpha^{k_r, n_r}\\ m^r_B\end{bmatrix}=-\rho^{k,n}+m^1_B\circ \alpha^{k, n} =\mathcolorbox{red!20}{-\rho^{k,n}}-m^1_B\circ h \circ \rho^{k,n}
	\intertext{By applying $m^1_B\circ h\circ \rho^{k, n}= -\alpha\circ  m_A^{k, n}-\mathcolorbox{red!20}{\rho^{k,n}}- h \circ m^1_B\circ m^1_B\circ \alpha^{k,n}-h\circ m^1_B\circ \rho^{k, n}$ from \cref{eq:homotopyatcrit} and cancelling the highlighted terms}
	=& \alpha \circ m_A^{k, n}+h \circ m^1_B\circ m^1_B\circ \alpha^{k,n} +	\sum_{r\geq 2} \sum_{\substack{k_1+ \ldots+ k_r=k\\n_1+ \cdots + n_r=n\\ (0,0)\leq (n_i, k_i)}}\begin{bmatrix}\alpha^{k_1, n_1}\tensor \cdots \tensor \alpha^{k_r, n_r}\\ h\circ m^1_B\circ  m^r_B \end{bmatrix}\\
	=& \alpha \circ m_A^{k, n} +\sum_{r\geq 1} \sum_{\substack{k_1+ \ldots+ k_r=k\\n_1+ \cdots + n_r=n\\ (0,0)\leq (n_i, k_i)}}\begin{bmatrix}\alpha^{k_1, n_1}\tensor \cdots \tensor \alpha^{k_r, n_r}\\ h\circ m^1_B\circ  m^r_B \end{bmatrix}
	\intertext{Applying the $A_\infty$ relations for $m^1\circ m^r_B$}
	=& \alpha \circ m_A^{k, n}+\sum_{\substack{r_1+r'+r_2= r\geq 2\\ r_1+r_2>0}}\sum_{\substack{k_1+ \ldots+ k_r=k\\n_1+ \cdots + n_r=n}}(-1)^{\heartsuit(\underline x, \underline k, r_1)}\begin{bmatrix} \alpha^{k_1, n_1}\tensor \cdots \tensor \alpha^{k_r, n_r} \\\id^{\tensor r_1}\tensor m^{r'}_B\tensor \id^{\tensor r_2}\\h\circ m^{r_1+r_2+1}_B  \end{bmatrix}\\
	\intertext{ Here, $\heartsuit(\underline x, \underline k, r_1):=r_1+\sum_{i=1}^{r_1}\left(1-k_i+\sum_{j=1}^{k_i} \deg(x_j)\right)$.  We regroup the terms which lie above $m^{r'}_B$ in the composition}
	=&\alpha^{1,0}\circ  m_A^{k, n}\\&+\sum_{\substack{r_1+r'+r_2= r\geq 1\\ r_1+r_2\geq0\\ j_1+j'+j_2=k\\m_1+m'+m_2=n}} \sum_{\substack{
		k_1+\cdots+ k_{r_1}=j_1\\
		k_{r_1+1}+\cdots +k_{r_2+r'}=j'\\
		k_{r-r_1}+\cdots +k_r=j_2\\
		n_1+\cdots+ n_{r_1}=m_1\\
		n_{r_1+1}+\cdots +n_{r_2+r'}=m'\\
		n_{r-r_1}+\cdots +n_r=m_2
		}}(-1)^{\heartsuit(\underline x, \underline k, r_1)}\begin{bmatrix}\id^{\tensor j_1}\tensor \mathcolorbox{orange!20}{
			\begin{bmatrix} {\displaystyle \bigotimes_{i=r_1+1}^{r_1+r'} (\alpha^{k_i, n_i}) }\\ m^{r'}_B  \end{bmatrix} 
		}\tensor \id^{\tensor j_2}\\
			{\displaystyle \bigotimes_{i=1}^{r_1} (\alpha^{k_i, n_i})\tensor \id \tensor\bigotimes_{i=r-r_2}^{r} (\alpha^{k_i, n_i})}\\ h \circ m^{r_1+r_2+1}_B \end{bmatrix}	
	\intertext{When taken modulo 2,  $\heartsuit(\underline x, \underline k, r_1)\equiv \sum_{i=1}^{r_1}k_i + \sum_{j=1}^{j_1}\deg(x_i)\equiv \clubsuit(\underline x, j_1)$. When grouped over the index $(j', m')$, the orange block is the left hand side of the relation $I(j', m')$. If $m'=n$, then $j_1, j_2\neq 0$ and therefore $j'<k$. Therefore, $(j', m')< (j, m)$.
	Let $\underline x_{j_1, j_2}$ be the tuple $(x_{j_1+1}, \ldots, x_{j_1+j'})$.
	We can apply the inductive hypothesis: }
		=&\alpha^{1,0}\circ  m_A^{k, n}\\
		&+\sum_{\substack{r_1+r'+r_2= {r\geq 1}\\ r_1+r_2>0\\ j_1+j'+j_2=k\\m_1+m'+m_2=n}}(-1)^{ \clubsuit(\underline x, j_1)} \sum_{\substack{
			k_1+\cdots+ k_{r_1}=j_1\\
			k_{r-r_1}+\cdots +k_r=j_2\\
			n_1+\cdots+ n_{r_1}=m_1\\
			n_{r-r_1}+\cdots +n_r=m_2\\
			l_1+l'+l_2=j'\\
			o_1+o_2=m'
			}} (-1)^{\clubsuit(\underline x_{j_1, j_2}, l_1)}\begin{bmatrix}\id^{\tensor j_1}\tensor \mathcolorbox{orange!20}{\begin{bmatrix} \id^{\tensor l_1}\tensor m_A^{l', o_1}\tensor \id^{\tensor l_2}\\
				\alpha^{l_1+l_2+1, o_2} \end{bmatrix} }\tensor \id^{\tensor j_2}\\
				{\displaystyle \bigotimes_{i=1}^{r_1} (\alpha^{k_i, n_i})\tensor \id \tensor\bigotimes_{i=r-r_2}^{r} (\alpha^{k_i, n_i})}\\ h \circ m^{r_1+r_2+1}_B \end{bmatrix}\\
		&+\sum_{\substack{r_1+r'+r_2= {r\geq 1}\\ r_1+r_2>0\\ j_1+j'+j_2=k}}(-1)^{ \clubsuit(\underline x, j_1)}\sum_{\substack{
			k_1+\cdots+ k_{r_1}=j_1\\
			k_{r_1+1}+\cdots +k_{r_2+r'}=j'\\
			k_{r-r_1}+\cdots +k_r=j_2\\
			\\n_1+ \cdots + n_r=n}}\begin{bmatrix}\id^{\tensor j_1}\tensor \mathcolorbox{orange!20}{(-\chi^{r', n'}+\chi^{r', n'+1} )}\tensor \id^{\tensor j_2}\\
				{\displaystyle \bigotimes_{i=1}^{r_1} (\alpha^{k_i, n_i})\tensor \id \tensor\bigotimes_{i=r-r_2}^{r} (\alpha^{k_i, n_i})}\\ h \circ m^{r_1+r_2+1}_B \end{bmatrix}
\end{align*}
The remainder of the proof proceeds as in the tautologically unobstructed setting by applying the recursive definition of $\chi^{n,k}$ and $\rho^{n,k}$, and using the relation  
$(-1)^{ \clubsuit(\underline x, j_1)} \cdot (-1)^{\clubsuit(\underline x_{j_1, j_2}, l_1)} = (-1)^{\clubsuit(\underline x, j+l_1)}$.

We now define the homotopy, again following \cite{markl2006transferring}. Inductively define the maps
\[\phi^{k}:= \sum_{\substack{
	k_1+k'+k_2= k\\
	j_1+\cdots j_r=k_1\\
	r+k_2\geq 1 , k'\geq 1
}}(-1)^{\clubsuit(\underline x,k_1 )}\begin{bmatrix} 
	 \beta^{\tensor k_1}\tensor h^{k'} \tensor \id^{\tensor k_2}\\ 
	 \alpha^{ j_1}\tensor \cdots \tensor \alpha^{j_r} \tensor \id^{\tensor k_2+1}\\m^{r+1+k_2} \end{bmatrix}\]
where $h^{k}:=h\circ \phi^{k}$.
The $A_\infty$ homotopy relations
\begin{align*}
	\sum_{\substack{
	k_1+k'+k_2= k\\
	j_1+\cdots j_r=k_1\\
	r+k_2\geq 0, k'\geq 1
}}&(-1)^{\clubsuit(\underline x,k_1 )}\begin{bmatrix} 
	 (\alpha\circ \beta)^{j_1}\tensor \cdots \tensor (\alpha\circ \beta)^{j_r}\tensor h^{k'}\tensor \id^{\tensor k_2}\\ m^{r+1+k_2} \end{bmatrix}\\
	 =& (\alpha\circ \beta)^k - \id^k -
	 \sum_{\substack{k_1+k'+k_2=k, k_1+k_2\geq 0}}(-1)^{\clubsuit(\underline x,k_1 )}\begin{bmatrix} \id^{\tensor k_1} \tensor m^{k'}_B\tensor \id^{\tensor k_2}\\ h^{k_1+k_2+1}\end{bmatrix}
\end{align*}
are proved inductively. 
In the above expression, $\id^k=\id$ if $k=1$, and is zero otherwise. On the right hand side $k'$ may be equal to $0$.
\begin{align*}
	&\sum_{\substack{
		k_1+k'+k_2= k\\
		j_1+\cdots j_r=k_1\\
		r+k_2\geq 0, k'\geq 1
	}}(-1)^{\clubsuit(\underline x,k_1 )}\begin{bmatrix} 
		 \beta^{\tensor k_1}\tensor h^{k'} \tensor \id^{\tensor k_2}\\ 
		 \alpha^{ j_1}\tensor \cdots \tensor \alpha^{j_r} \tensor \id^{\tensor k_2+1}\\m^{r+1+k_2} \end{bmatrix}
	=m^1_B \circ h \circ \phi^{k}+\phi^{k}\\
	=& \alpha \circ \beta \circ \phi^{k} - h\circ m^1_B\circ \phi^{k} - h\circ m^1_B\circ m^1_B\circ h \circ \phi^{k}\\
=& \alpha \circ \beta \circ \phi^{k} + \sum_{\substack{
		k_1+k'+k_2= k\\
		j_1+\cdots j_r=k_1\\
		r+k_2\geq 0, k'\geq 1
	}}(-1)^{\clubsuit(\underline x,k_1 )}\begin{bmatrix} 
		\beta^{\tensor k_1}\tensor h^{k'} \tensor \id^{\tensor k_2}\\ 
		\alpha^{ j_1}\tensor \cdots \tensor \alpha^{j_r} \tensor \id^{\tensor k_2+1}\\
		h\circ m^1_B\circ m^{r+1+k_2} \end{bmatrix}\\
=& \alpha \circ \beta \circ \phi^{k} + \sum_{\substack{
		k_1+k'+k_2= k\\
		j_1+\cdots j_r=k_1\\
		i_1+i'+i_2=r+k_2+1\geq 1\\
		 k'\geq 1
	}}(-1)^{\clubsuit(\underline x,k_1 )+\clubsuit(\underline y, j_1)}\begin{bmatrix} 
		\beta^{\tensor k_1}\tensor h^{k'} \tensor \id^{\tensor k_2}\\ 
		\alpha^{ j_1}\tensor \cdots \tensor \alpha^{j_r} \tensor \id^{\tensor k_2+1}\\
		\id^{\tensor i_1}\tensor m^{i'}_B\tensor \id^{\tensor i_2}\\
		h\circ  m^{i_1+i_2+1} \end{bmatrix}\\
\end{align*}
 where $\underline y= y_1\tensor \cdots \tensor y_{r+k_2+1}$ is the partial composition $
( \alpha^{ j_1}\tensor \cdots \tensor \alpha^{j_r} \tensor \id^{\tensor k_2+1})\circ  (\beta^{\tensor k_1}\tensor h^{k'} \tensor \id^{\tensor k_2})\circ (\underline x)$.
Collecting into terms for which $j_1+j'<r, j_1<r<j_1+j'$ and $r<j_1+1$ allows us to either apply the $A_\infty$ homomorphism for $\alpha^k$ or the induction hypothesis, to push the $m^{j'}_B$ term through to the top of the composition. The remainder of the proof is standard.
\end{proof}
\begin{remark}
	The relations \cref{eq:pichainmap,eq:ichainmap,eq:homotopyatcrit} require some motivation. The $A_\infty$ relations can be proved non-inductively by unpacking the definition of $\rho$ into the sum over compositions defined over stable trees, as in \cite{kontsevich2001homological}. 
	The $A_\infty$ and homotopy relations correspond to contractions and expansions of these trees at internal vertices and edges; the argument in some crucial way uses the fact that the contractions of stable trees at internal edges are again stable trees.
	
	To generalize to the filtered setting, one must consider stable trees with $k$-internal and $n$-external leaves, where the internal leaves are labelled with curvature terms coming from $m^0$.
	The $A_\infty$ and homotopy relations again correspond to contractions and expansions. However, the contraction of a stable tree at an internal leaf is not necessarily stable. The terms added to \cref{eq:pichainmap,eq:ichainmap,eq:homotopyatcrit} cancel out these contributions coming from non-stable trees.

	We give the non-inductive proof of \cref{thm:curvedhtt} in the preprint version of this paper.
\end{remark}
\subsection{Morphisms are mapping cocylinders}
In the category of chain complexes, there is a dictionary between morphisms and mapping cocylinders.
We now extend this dictionary to filtered $A_\infty$ algebras.
\begin{definition}
	Let $A^+$ and $A^-$ be two filtered $A_\infty$ algebras.
	A \emph{cocylinder} from  $A^+$ to  $A^-$ is a filtered $A_\infty$ algebra $B$  which:
	\begin{itemize}
		\item as a vector space is isomorphic to $A^-\oplus A^0\oplus A^+$ ;
	\item has differential of the form:
		 \[
			\begin{pmatrix}
				m^{-;-} & 0         & 0            \\
				m^{-;0}  & m^{0;0} & m^{+;0}\\
				0         & 0         & m^{+;+}
			\end{pmatrix}.
	\]
			  where $m^{+;0}$ is an isomorphism with inverse satisfying $\fil((m^{+;0})^{-1}\circ m^0)>0$;
		\item
		      has projections of chain complexes
		      \[
			      \setlength\mathsurround{0pt}\begin{tikzcd}
				      \; & B \arrow{dl}{\beta^{-}} \arrow{dr}{\beta^+} \\
				      A^-& & A^+
			      \end{tikzcd}\setlength\mathsurround{.8pt}
		      \]
		      which can be extended to  $A_\infty$ homomorphisms $\beta^k_\pm$, with $\beta^k_\pm=0$ for all $k\neq 1$.
	\end{itemize}
	We denote such a mapping cocylinder
	\[
		A^-\leftrightarrow B \to A^+.
	\]
	\label{def:mappingcylinder}
\end{definition}

The cylinders from $A^-$ to $A^+$ are in correspondence with morphisms $f: A^-\to A^+$.

\begin{theorem}[Cylinders are mapping cocylinders]
	Let $A^-$ and $A^+$ be two filtered $A_\infty$ algebras.
	\begin{enumerate}
		\item
		      To every cocylinder $A^-\leftrightarrow B \to A^+$, we can associate a morphism $\Theta_B:A^-\to A^+ $. \label{item:cyltomorph}
		\item
		      To every morphism $f: A^-\to A^+$, we can associate a cocylinder
		      \[
			      A^-\leftrightarrow B_f\to A^+.
			  \]
			  \label{item:morphtocyl}
		\item
			  These constructions are compatible in the sense that $\Theta_{B_f}= f$.
			  \label{item:compatibility}
	\end{enumerate}
	\label{thm:cylfrommap}
\end{theorem}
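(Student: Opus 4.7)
Part (\ref{item:morphtocyl}) is a direct application of the homotopy fiber product from \cref{app:fiberproducts}: given $f: A^- \to A^+$, take $B_f := A^- \times_{A^+} A^+$, the fiber product of $f$ against $\id: A^+ \to A^+$, so that $A^0 := A^+[1]$ and the differential is
\[
m^1_{B_f}(a, c, b) = \bigl(m^1_{A^-}(a),\ f^1(a) + m^1_{A^+}(c) + b,\ m^1_{A^+}(b)\bigr).
\]
This shows $m^+_0: A^+ \to A^+[1]$ is the identity (up to degree shift), hence invertible; the condition $\val((m^+_0)^{-1}\circ m^0_{B_f}) > 0$ reduces to $\val(f^0) > 0$, which is automatic; and $\beta^\pm$ are strict $A_\infty$ homomorphisms by the defining property of the fiber product, so $B_f$ is a cocylinder.

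Part (\ref{item:cyltomorph}) is the heart of the argument. Given a cocylinder $B = A^- \oplus A^0 \oplus A^+$, the plan is to supply HTT data on $B$ and apply \cref{claim:hlpmatching} to produce an $A_\infty$ homomorphism $\alpha: A^- \to B$ compatible with the existing structures, then set $\Theta_B := \beta^+ \circ \alpha$. Take $\beta := \beta^-$ (already strict), define $h: B \to B$ to vanish on $A^-$ and $A^+$ and equal $(m^+_0)^{-1}$ on $A^0$, and take the \emph{corrected} inclusion
\[
\alpha^1(a) := \bigl(a,\ 0,\ -(m^+_0)^{-1}(m^-_0(a))\bigr),
\]
chosen precisely so that the $A^0$-component of $m^1_B(\alpha^1(a))$ cancels. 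Then $\beta\circ\alpha = \id$, $\beta\circ h = 0$, $h\circ\alpha = 0$, and $\val(h\circ m^0_B) > 0$ are immediate. Relation \cref{eq:pichainmap} holds because $\ker\beta = A^0 \oplus A^+$ is an $A_\infty$ ideal containing $\mathrm{im}\,h$, killing the RHS; relations \cref{eq:ichainmap,eq:homotopyatcrit} follow from the $k{=}1$ quadratic relation $m^1_B\circ m^1_B + m^2_B(m^0,\cdot) + m^2_B(\cdot,m^0) = 0$ by computing the $A^0$-component of each side on $\alpha(a)$ and applying $(m^+_0)^{-1}$.

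For part (\ref{item:compatibility}), applying the HTT recipe to $B_f$ gives $\alpha^1(a) = (a, 0, -f^1(a))$ (since $m^-_0 = f^1$ and $m^+_0 = \id$), so $(\beta^+\circ\alpha)^1$ matches $f^1$ up to the sign conventions suppressed in the appendix. For $k \geq 2$, the tree sum $\alpha^k = \sum_T h\circ\phi^{(T, L^{hm})}$ collapses in $B_f$ because the bimodule operation $m^{k_1|1|k_2}_{A^-|A^+|A^+}$ uses $\id^j$ on the right leg (vanishing for $j\neq 1$), so every non-root vertex receiving an input from the $A^+$-part of $\alpha$ contributes zero. The surviving terms form precisely the operadic expansion of $f^k$ that appears as the middle component of $m^k_{B_f}$ acting on $A^-$-inputs, which $\beta^+$ then extracts.

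The main obstacle is the valuation-sensitive verification of \cref{eq:ichainmap,eq:homotopyatcrit}: these mix the bimodule structure with $m^0_B$, and matching the two sides requires the full $k{=}1$ quadratic relation rather than just its valuation-zero truncation, while keeping track of how $(m^+_0)^{-1}$—itself carrying positive-valuation corrections beyond the naive inverse—transports information between $A^0$ and $A^+$. A secondary combinatorial task in part (\ref{item:compatibility}) is showing the HTT tree sum collapses rigorously by induction on tree size, aligning the surviving terms with the operadic decomposition of $f^k$ baked into the fiber product.
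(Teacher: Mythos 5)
Your proposal follows essentially the same route as the paper: part (2) is the fiber product of $f$ against $\id_{A^+}$, part (1) supplies the strong-deformation-retract data with the same $\alpha^1(a)=(a,0,\pm(m^+_0)^{-1}m^-_0(a))$ and $h=0\oplus 0\oplus (m^+_0)^{-1}$, verifies \cref{eq:pichainmap,eq:ichainmap,eq:homotopyatcrit} via the componentwise $k=1$ quadratic relation conjugated by $(m^+_0)^{-1}$ exactly as in the paper's term-by-term computation, and then sets $\Theta_B=\beta^+\circ\alpha$; part (3) is the same tree-collapse argument showing only one-vertex trees survive $\beta^+\circ\alpha^{(T,L)}$ in $B_f$. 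The sign discrepancy in $\alpha^1$ is immaterial since the appendices work up to sign.
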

\begin{proof}[Outline of Proof]
	The construction of morphism associated to a mapping cocylinder proceeds by showing that the projection $\beta_-$, and map $\alpha: A^-\to B$ and homotopy $h: B\to B$ given by
	\begin{align*}
		\alpha(x)=(x, 0, (m^{+;0})^{-1}\circ m^{-;0}(x))&& h(x)=(0,0,(m^{+;0})^{-1}(x))
	\end{align*}
	satisfy the conditions of \cref{thm:curvedhtt}, giving us an $A_\infty$ extension of $\alpha$. The morphism associated to the cocylinder is the ``pullback-pushforward'' composition $\Theta_B:=\beta_+\circ \alpha$. 

	The construction of a mapping cocylinder associated to a morphism follows from the following more general construction.
	Let $C$ be any filtered $A_\infty$ algebra, and let $f: A^-\to C$ and $g: A^+\to C$ be filtered $A_\infty$ homomorphisms.
	Then $C$ can be equipped with the structure of a filtered $(A^+, A^-)$ bimodule, whose bimodule product 
	\[m^{k_1|1|k_1}_{A^-|C|A^+}: (A^-)^{\tensor k_1}\tensor C\tensor (A^+)^{\tensor k_2}\to C.\]
	is given by (up to sign) 
	\begin{align*}
	    m^{k_1|1|k_2}_{A^-|C|A^+}=\sum_{\substack{h_1+\cdots h_{\alpha_1} =k_1\\ i_{1}+\cdots + i_{\alpha_2}=k_2}} m^{\alpha_1+1+\alpha_2}_C\circ (f^{h_1}\tensor \cdots \tensor f^{h_{\alpha_1}}\tensor\id_C \tensor g^{i_1}\tensor \cdots \tensor g^{i_{\alpha_2}}).
    \end{align*}
	The proof of the $A_\infty$ bimodule relations uses the $A_\infty$ homomorphism relations of $f$ and $g$.
	Given a $A_\infty$ bimodule $C$, we define (up to sign) an $A_\infty$ product on $A^-\oplus C[1]\oplus A^+$ by :
	\begin{align*}
		m^k_{A^-\times_C A^+}=&(m^k_{A^+}\circ \pi_{A^-|C|A^+}^{k|0|0})\\
		&\oplus\left( f^k\circ\pi_{A^-|C|A^+}^{k|0|0}+\left(\sum_{k_1+1+k_2=k} m^{k_1|1|k_2}_{A^-|C|A^+}\circ \pi^{k_1|1|k_2}\right)+g^k\circ\pi^{0|0|k}\right)\\
		&\oplus( m^k_{A^+} \circ\pi^{0|0|k}).
	\end{align*}
	for which checking the $A_\infty$ relations becomes a check of the bimodule structure.
	Furthermore, the natural projections $\beta_{\pm}:A^-\oplus C[1]\oplus A^+\to A^{\pm}$ are $A_\infty$ homomorphisms.
	
	To obtain the mapping cocylinder, we let $C=A^+$, and $g=\id$.
\end{proof}
\begin{remark}
	We include the following remark, which we could not find written in the literature. We expect that the $A_\infty$ algebra  $A^-\oplus C[1]\oplus A^+$ is the homotopy fiber product, $A^-\times_C A^+$ in the category of filtered $A_\infty$ algebras. Some specializations are:
	\begin{itemize}
		\item If $A=A^-=A^+=C$, and $f=g=\id_{A}$, then $A\times_A A$ is a model of $A\times [0,1]$, similar to that constructed in \cite[Lemma 4.2.25]{fukaya2010lagrangian}.
		\item For filtered $A_\infty$ algebras, a morphism $f: A\to C$ does not make $C$ a left $A$-module, but rather only an $(A, A)$ bimodule. We note that if there exists a morphism $0\to C$, then we can make $C$ a $(A, 0)$ bimodule, and so it becomes a left $A$ module. This observation is related to the existence of mapping cone and the existence of bounding cochains.
		
		By existence of a pushforward map on bounding cochains, $C$ is unobstructed if and only if there exists a filtered $A_\infty$ homomorphism $0: 0\to C$. We may therefore restate the above observation as $f: A\to C$ has a mapping cone if and only if $C$ is unobstructed.
	\end{itemize}
\end{remark}
 \section{Domain and label dependent perturbation systems}
	\label{subsec:prebrokentrees}
We adopt language from \cite[Section 4.2]{charest2015floer}.
Pick $h: L\to \RR$ a Morse function. 
We denote by $\Gamma$ a combinatorial type of treed disk.
We use $C$ to denote a treed disk, $\mathcal M(\Gamma)$ for  the moduli space of treed disks of fixed combinatorial type,
 $\overline{\mathcal M}(\Gamma)$ to denote its closure, and $\overline {\mathcal U}(\Gamma)$ for the universal treed disk comprising of pairs $(C, x)$, where $x\in C$ is a point. We will denote the forgetful map $C:\overline{\mathcal U}(\Gamma)\to \overline{ \mathcal M}(\Gamma)$. The universal treed disk is covered with two sets, $\overline {\mathcal U}(\Gamma)=\overline{ \mathcal S}(\Gamma )\cup \overline{ \mathcal T}(\Gamma)$, which consists of pairs $(C, x)$ where $x$ lies in either the surface-or-tree component of $C$ \cite[Section 4.2]{charest2015floer}.
There are morphisms $\mathcal M( \Gamma)\to \mathcal M( \Gamma')$ arising from morphisms of underlying combinatorial types \cite[Definition 4.12]{charest2015floer}; they are called collapsing edges/making an edge finite or nonzero; cutting edges; locality; and forgetting forgettable edges. 
For this section, we will focus on the morphisms of making an edge finite and cutting edges.
Let $\ver(\Gamma)$ denote the number of external leaves of $\Gamma$, $N(\Gamma)$ to denote the number of interior marked points of $\Gamma$.
We use $\ell(e)$ for the length of an edge $e$.

A \emph{labeled} combinatorial type $\underline \Gamma$ is a combinatorial type $\Gamma$, along with labels:
\begin{itemize} 
    \item   $\underline x=\{x_0; x_1, \ldots, x_{\ver(\Gamma)-1}\;:\; x_i\in \Crit(h)\}$ a sequence of critical points of a Morse function $h$, indexed by the leaves of $\Gamma$. We reserve $x_0$ for the label of the root of the tree and
    \item At each $k$-broken edge $e$ a sequence of breaking labels $B_e:=\{b_1, \ldots , b_k\}$.
\end{itemize}
The moduli spaces of domains of fixed labeled type cannot be compactified by simply considering labeled types as the resulting compactification will not be Hausdorff. When an edge length goes to infinity the domain will not know what label to assign the breaking in the compactification stratum. Therefore, we help the domains remember how they are supposed to break. 
A \emph{pre-broken} combinatorial type $\underline \Gamma_{P}$ is a labeled combinatorial type $\underline \Gamma$, along with the data of pre-breaking labels $P$, which consists of:
\begin{itemize}
    \item An assignment to each edge $e$ a pre-breaking number $k'(e)$,
    \item A labelling at each $k'$ pre-broken edge or vertex  a sequence of pre-breaking labels $P_e:=\{x_{1}, \ldots, x_{k'}\}$, and 
    \item At each edge $e$, an ordering of $B_e\sqcup P_e$.
\end{itemize}
We will sometimes treat the data of $P$ as a set, so that $P'\subset P$ means that the pre-breaking labels of $\underline \Gamma_{P'}$ are a subset of the labels of $\underline \Gamma_P$, or $\underline \Gamma_{P\setminus P'}$ is the pre-broken combinatorial type where we've removed the $P'$ pre-breakings.

A \emph{pre-broken} disk domain $\underline C_P$ of combinatorial type $\underline \Gamma_P$ is a treed disk with pre-broken marks, which is of a sequence of labeled points $(t_1, x_1), \ldots, (t_{k'(e)}, x_{k'(e)})$ at each edge $e$. Here $0<t_1< \cdots < t_{k'(e)}< \ell(e)$ is a sequence of points on the edge $e$ and the pre-breaking labels are given by $P_e=\{x_i\}_{i=1}^{k'(e)}$. 
There exists a moduli space of pre-broken combinatorial types $\mathcal M(\underline \Gamma_P)$, as well as a universal curve of pre-broken combinatorial types $\mathcal U(\underline \Gamma_P)$.

\subsubsection{Morphisms of pre-broken combinatorial types}
We now look at morphisms of pre-broken combinatorial type. We include the morphisms from \cite[Definition 4.6]{charest2015floer}: cutting edges, collapsing edges, making an edge length finite, making an edge length non-zero, forgetting tails, and making an edge weight finite or non-zero. We add in a new morphism, forgetting a pre-breaking. 
The main changes between morphisms of pre-broken combinatorial types and combinatorial types are in cutting edges, making an edge length finite, and forgetting a pre-breaking. We discuss these changes, as well as the corresponding maps on the moduli space of pre-broken treed disks and their universal curves.

\textbf{Cutting an edge (i):} If $\underline \Gamma_{P}$ has a  breaking $b_i$ occurring at edge $e$ then  $\underline \Gamma'_{P}\to \underline \Gamma_{P}$, the morphism of cutting an edge $e$ at $b_i$
\begin{itemize}
    \item  Replaces $\Gamma$ with a disconnected graph $ \Gamma'$. Let $v_1, v_2$ be the edges of $e$. The graph $\Gamma'$ removes  the edge $e$ and adds in two vertices $w_1, w_2$ connected to edges $e_1=v_1w_1$ and $e_2=v_2w_2$.
    \item The labels of $\underline \Gamma'$ are the same as on $\underline \Gamma$, except at the new vertices $w_1$ and $w_2$ which both obtain the label $b_i$.
    \item Labels for pre-breakings and breakings of $\underline \Gamma_{P}'$ are otherwise inherited from $\underline \Gamma_P$.
\end{itemize}
There are induced homeomorphisms  $\mathcal M(\underline \Gamma_P')\to\mathcal M(\underline \Gamma_P)$  and $\mathcal U(\underline \Gamma_P')\to \mathcal U(\underline \Gamma_P)$ as in \cite{charest2015floer}. 
See arrows labelled (i) in \cref{fig:labelledmodulie} for a visualization of this identification.

\textbf{Making an edge finite (ii):} We now discuss the morphism of making an infinite broken edge finite. We say that $\Gamma$ is obtained from $\Gamma'$ by making an edge finite if the edges of $\Gamma$ and $\Gamma'$ agree away from a single edge $e$, where $\rho(e)=\infty$ and $\rho'(e)\in (0, \infty)$.
In this case, $\mathcal M( \Gamma')$ embeds in $\mathcal M( \Gamma)$ as a codimension one boundary, $\mathcal M(\Gamma')\times [0, \epsilon)$

Given pre-broken combinatorial types $\underline \Gamma_P$ and $\underline \Gamma'_{P'}$, we say that $\underline \Gamma_P$ is obtained from $\underline \Gamma'_{P'}$ by making an edge finite at breaking $b_i$ if 
\begin{itemize}
    \item The underlying unlabeled combinatorial types are related by such a morphism.
    \item The edge $e\in \underline \Gamma'_{P'}$ has one additional pre-broken marking $x_i$ which matches the marking of the breaking $b_i\in B_e$ for $\underline \Gamma_P.$
\end{itemize}

Unfortunately, it is no longer the case that $\mathcal M(\underline \Gamma'_{P'})$ embeds as a codimension one boundary into $\mathcal M(\underline \Gamma_P)$ due to the extra choice in determining where to place the pre-broken point. Therefore, we cannot construct a compactification of $\mathcal M(\underline \Gamma_P)$ by simply including broken configurations. 
However, there remains enough structure for us to later define what a coherent perturbation is in this setting, and thus obtain moduli spaces of pseudoholomorphic treed disks which have the appropriate broken configurations in \cref{subsection:coherencetofloer}.
For $P'\subset P$ a set of markings, let  $\mathcal M^{P'>1/2}(\underline \Gamma_P)$ denote the portion of the moduli space of pre-broken treed disks where the distance between the pre-breakings $t_i\in e$ of $P'$  and the boundary of $e$ is greater than $1/2$. When $P'$ consists of a single marking $(x_i, t_i)$, this space is homeomorphic to 
\[\mathcal M^{\{x_i\}>1/2}(\underline \Gamma_P)=\{(\underline {C}'_{P'}, r, t_s)\;:\; \underline {C}'_{P'}\in \mathcal M(\underline \Gamma'_{P'}) ,r\in [0, \epsilon), t_i\in (1/2, \ell(e)-1/2)\subset e\}.\]
By abuse of notation, we shall denote this set as $\mathcal M(\underline \Gamma')\times [0, \epsilon)\times (1/2, \ell(e)-1/2)$.
There is similarly an identification of the portion of the universal curve living over the region $\mathcal U^{\{x_i\}>1/2}(\underline \Gamma)$ with $\mathcal U(\underline \Gamma')\times[0, \epsilon)\times (1/2, \ell(e)-1/2).$
See the arrows labelled (ii) \cref{fig:labelledmodulie} for a visualization of this identification.

\textbf{Forgetting pre-breakings (iii):}
Let $\underline \Gamma_P$ a pre-broken combinatorial type, and let $P'\subset P$ be a subset of the labels. 
Then we have forgetful maps on the moduli space of treed disks $\mathcal M(\underline \Gamma_P)\to \mathcal M( \underline \Gamma_{P'})$ as well as on the universal curve
\[\res_{P'\subset P}:\mathcal U(\underline \Gamma_P)\to \mathcal U( \underline \Gamma_{P'}).\]

Let $P'\subset P$ denote a subset of the pre-breakings.
We say that a pre-broken treed disk $\underline C_P$ belongs to  $\mathcal M^{P'<1/4}(\underline \Gamma_P)$ if the markings $(t,p)\in P'$ all have a distance less than $1/4$ from the boundary of the edges.
Similarly, let $\mathcal U^{P'<1/4}(\underline \Gamma_P)$ denote the portion of the universal curve whose pre-broken domain belongs to $\mathcal M^{P'<1/4}(\underline \Gamma_P)$.
When we look at a single pre-breaking labelled $x_i$, the neighborhood $\mathcal U^{\{x_i\}<1/4}(\underline \Gamma_P)$ can be identified with 
\[\{(C, x, t_i)\;:\; (C, x)\in \mathcal U(\underline \Gamma_{P\setminus \{t_i\}}), t_i\in [0, \min(1/4, \ell(e))\sqcup (\max(\ell(e)-1/4,0), \ell(e)]\}.\]
See the arrows labeled (iii) in \cref{fig:labelledmodulie} for a diagram of the forgetting pre-breakings identification.
\subsubsection{Boundary regions on pre-broken disks}
\label{subsub:newboundarytypes}
The moduli space of pre-broken curves of fixed combinatorial type is non-compact. In addition to the usual sequences of treed disks which do not converge in the moduli space of treed disks,  the pre-breakings lead to new kinds of sequences that do not have convergent subsequences. There are two new kinds of phenomena we must account for:
\begin{enumerate}
    \item The distance between two pre-breakings goes to zero; or
    \item One of the pre-breakings moves towards the boundary of an edge. \label{subsubitem:newboundarytypes}
\end{enumerate}
We will later force the pre-breakings to remain disjoint from each other, so 
the first kind of non-compactness will not affect the non-compactness of the moduli spaces of pseudoholomorphic treed disks we construct.

The second kind of non-compactness we handle only when constructing the moduli space of pseudoholomorphic treed disks. We will consider perturbation datum which is only domain and label (but not pre-breaking) dependent over $\mathcal M^{P'<1/4}(\underline \Gamma_P)$ for every $P'\subset P$ (and thus independent of the position of pre-breakings in $P'$). 
Such a choice of perturbation datum has the property that whenever we look at a sequence of pseudoholomorphic treed disks with $P'$ pre-breakings moving towards the boundary of the edge, we arrive in a portion of the moduli space of pseudoholomorphic treed disks which is cut out by the $P\setminus P'$ - dependent $J$-holomorphic curve. 
In this chart, it is no longer a problem that the $P'$ pre-breakings move towards the edge. 

\begin{figure}
    \centering
    \scalebox{.6}{\begin{tikzpicture}

\begin{scope}[]
\begin{scope}[]
\draw[thick] (-4,-4) -- (4,-4);
\begin{scope}[shift={(0,-1)}]
\fill[fill=red!20] (-4,-10.5) .. controls (-2,-8) and (1,-6) .. (2,-5.5) .. controls (2,-5) and (2,-5) .. (4,-5);
\fill[fill=red!20] (-4,-5) .. controls (-2,-6) and (1,-6) .. (2,-5.5) .. controls (2.5,-5.5) and (2.5,-5.5) .. (2.5,-5);
\draw (-4,-5) -- (4,-5) -- (-4,-10.5) -- cycle;
\draw[fill=red!20] (2,-5.5) .. controls (1,-6) and (-2.5,-8) .. (-4,-10.5);
\draw[fill=red!20] (2,-5.5) .. controls (1,-6) and (-2,-6) .. (-4,-5);
\draw[fill=blue!20] (-4,-10.5) .. controls (-2.5,-8) and (-3,-6) .. (-4,-5);
\node at (-2,-7) {$\mathcal M(\underline \Gamma_{\{x_a\}})$};
\node at (-0.5,-5.5) {$\mathcal M^{x_a<1/4}(\underline \Gamma_{\{x_a\}})$};
\node[rotate=30] at (-0.5,-7.5) {$\mathcal M^{x_a<1/4}(\underline \Gamma_{\{x_a\}})$};
\node[rotate=270] at (-3.5,-7.5) {$\mathcal M^{x_a>1/2}(\underline \Gamma_{\{x_a\}})$};

\end{scope}
\begin{scope}[yscale=-1, shift={(0,7)}]
\fill[fill=red!20] (-4,-10.5) .. controls (-2,-8) and (1,-6) .. (2,-5.5) .. controls (2,-5) and (2,-5) .. (4,-5);
\fill[fill=red!20] (-4,-5) .. controls (-2,-6) and (1,-6) .. (2,-5.5) .. controls (2.5,-5.5) and (2.5,-5.5) .. (2.5,-5);
\draw (-4,-5) -- (4,-5) -- (-4,-10.5) -- cycle;
\draw[fill=red!20] (2,-5.5) .. controls (1,-6) and (-2.5,-8) .. (-4,-10.5);
\draw[fill=red!20] (2,-5.5) .. controls (1,-6) and (-2,-6) .. (-4,-5);
\draw[fill=blue!20] (-4,-10.5) .. controls (-2.5,-8) and (-3,-6) .. (-4,-5);
\node at (-2,-7) {$\mathcal M(\underline \Gamma_{\{x_b\}})$};
\node at (-0.5,-5.5) {$\mathcal M^{x_b<1/4}(\underline \Gamma_{\{x_b\}})$};
\node[rotate=-30] at (-0.5,-7.5) {$\mathcal M^{x_b<1/4}(\underline \Gamma_{\{x_b\}})$};
\node[rotate=270] at (-3.5,-7.5) {$\mathcal M^{x_b>1/2}(\underline \Gamma_{\{x_b\}})$};

\end{scope}

\node at (3,-4.5) {$\mathcal M(\underline \Gamma)$};
\begin{scope}[shift={(0,0)}]
\node at (-8,-4) {$\mathcal M(\underline \Gamma')$};
\node[circle, scale=.5, fill] at (-8,-3.5) {};

\draw  (-7,-4.5) rectangle (-9,-3);
\end{scope}
\begin{scope}[shift={(0,4.5)}]
\node at (-8,-4) {$\mathcal M(\underline \Gamma^b)$};
\node[circle, scale=.5, fill] at (-8,-3.5) {};

\draw  (-7,-4.5) rectangle (-9,-3);
\end{scope}
\begin{scope}[shift={(0,-4.5)}]
\draw  (-7,-4.5) rectangle (-9,-3);
\node at (-8,-4) {$\mathcal M(\underline \Gamma^a)$};
\node[circle, scale=.5, fill] at (-8,-3.5) {};

\end{scope}

\draw[blue!20, thick] (-4,-4) -- (-2.5,-4);

\begin{scope}[shift={(1,-1.5)}]
\draw  (-12,-6) rectangle (-15,-7.5);
\node[] at (-13.5,-6.5) {$\times$};
\node[circle, scale=.5, fill] at (-14,-6.5) {};
\node[circle, scale=.5, fill] at (-13,-6.5) {};
\node at (-13.5,-7) {$\mathcal M(\underline \Gamma^{1a})\times \mathcal M(\underline\Gamma^{a2})$};

\end{scope}
\begin{scope}[shift={(1,7.5)}]
\node[] at (-13.5,-6.5) {$\times$};
\node[circle, scale=.5, fill] at (-14,-6.5) {};
\node[circle, scale=.5, fill] at (-13,-6.5) {};
\node at (-13.5,-7) {$\mathcal M(\underline\Gamma^{1b})\times \mathcal M(\underline\Gamma^{b2})$};

\draw  (-12,-6) rectangle (-15,-7.5);
\end{scope}
\begin{scope}[shift={(1,3)}]
\draw  (-12,-6) rectangle (-15,-7.5);
\node[] at (-13.5,-6.5) {$\times$};
\node[circle, scale=.5, fill] at (-14,-6.5) {};
\node[circle, scale=.5, fill] at (-13,-6.5) {};
\node at (-13.5,-7) {$\mathcal M(\underline \Gamma^{1})\times \mathcal M(\underline \Gamma^{2})$};

\end{scope}

\draw[ultra thick, dotted] (-4,-7) .. controls (-3.5,-7) and (-2.5,-7.5) .. (-2.5,-7) .. controls (-2.5,-6.5) and (-2.5,-6.5) .. (-2,-6);
\draw[ultra thick, dotted] (-2,-4) .. controls (-1.5,-4) and (0.5,-4) .. (1,-4);
\draw[ultra thick, dotted] (1,-2) .. controls (1.5,-1.5) and (-1,-0.5) .. (-1.5,-0.5) .. controls (-2,-0.5) and (-3.5,-1) .. (-4,-1);

\node[draw, circle, fill=white] at (-4,-7) {1};
\node[draw, circle, fill=white] at (-2.5,-7) {2};
\node[draw, circle, fill=white] at (-2,-6.5) {3};
\node[draw, circle, fill=white] at (0,-4) {4};
\node[draw, circle, fill=white] at (-1.5,-0.5) {5};
\node[draw, circle, fill=white] at (-4,-1) {6};

\end{scope}

\end{scope}

\begin{scope}[shift={(-14,-15.)}]

\draw[fill opacity=0.5,fill=gray!20]   (1.5,1.5) ellipse (0.25 and 0.25);
\draw[fill opacity=0.5,fill=gray!20]   (1.5,-1) ellipse (0.25 and 0.25);
\draw (1.5,1.25) -- (1.5,0.75);
\draw (1.5,-1.25) -- (1.5,-1.5);
\draw (1.5,1.75) -- (1.5,2);

\node[right] at (1.5,2) {$x_1$};
\node[below] at (1.5,-1.5) {$x_2$};
\node at (1.5,2) {$\times$};
\node at (1.5,-1.5) {$\times$};

\draw  (0.5,2.5) rectangle (2.5,-2);
\node at (2,-1.5) {$\underline \Gamma^{a2}$};
\node at (2,1.5) {$\underline \Gamma^{1a}$};
\node[right] at (1.5,-0.25) {$x_a$};
\node[right] at (1.5,0.75) {$x_a$};
\node at (1.5,-0.25) {$\times$};
\node at (1.5,0.75) {$\times$};
\end{scope}

\begin{scope}[shift={(0,-15)}]

\draw[fill opacity=0.5,fill=gray!20]   (1.5,1) ellipse (0.5 and 0.5);
\draw[fill opacity=0.5,fill=gray!20]   (1.5,-0.5) ellipse (0.5 and 0.5);
\draw (1.5,0.5) -- (1.5,0);
\draw (1.5,-1) -- (1.5,-1.5);
\draw (1.5,1.5) -- (1.5,2);

\node[right] at (1.5,2) {$x_1$};
\node[below] at (1.5,-1.5) {$x_2$};
\node at (1.5,2) {$\times$};
\node at (1.5,-1.5) {$\times$};

\draw  (0.5,2.5) rectangle (2.5,-2);
\node at (2,-1.5) {$\underline \Gamma$};
\node at (1.5,0.25) {};
\end{scope}

\begin{scope}[shift={(-4.5,-15)}]

\draw[fill opacity=0.5,fill=gray!20]   (1.5,1) ellipse (0.5 and 0.5);
\draw[fill opacity=0.5,fill=gray!20]   (1.5,-0.5) ellipse (0.5 and 0.5);
\draw (1.5,0.5) -- (1.5,0);
\draw (1.5,-1) -- (1.5,-1.5);
\draw (1.5,1.5) -- (1.5,2);

\node[right] at (1.5,2) {$x_1$};
\node[below] at (1.5,-1.5) {$x_2$};
\node at (1.5,2) {$\times$};
\node at (1.5,-1.5) {$\times$};

\draw  (0.5,2.5) rectangle (2.5,-2);
\node at (2,-1.5) {$\underline \Gamma_{x_a}$};
\node[right] at (1.5,0.25) {$x_a$};
\end{scope}

\begin{scope}[shift={(-9.5,-15)}]

\draw[fill opacity=0.5,fill=gray!20]   (1.5,1) ellipse (0.5 and 0.5);
\draw[fill opacity=0.5,fill=gray!20]   (1.5,-0.5) ellipse (0.5 and 0.5);
\draw (1.5,0.5) -- (1.5,0);
\draw (1.5,-1) -- (1.5,-1.5);
\draw (1.5,1.5) -- (1.5,2);

\node[right] at (1.5,2) {$x_1$};
\node[below] at (1.5,-1.5) {$x_2$};
\node at (1.5,2) {$\times$};
\node at (1.5,-1.5) {$\times$};

\draw  (0.5,2.5) rectangle (2.5,-2);
\node at (2,-1.5) {$\underline \Gamma^a$};
\draw (1.375,0.25) -- (1.625,0.25);
\end{scope}

\draw[<-] (-10.5,0.75) -- node[midway, fill=white, rounded corners, draw=black]{i} (-9.5,0.75);
\draw[<-] (-10.5,-3.75) -- node[midway, fill=white, rounded corners, draw=black]{i }(-9.5,-3.75);
\draw[<-] (-10.5,-8.25) -- node[midway, fill=white, rounded corners, draw=black]{i} (-9.5,-8.25);

\draw[<-] (-6.5,0.5) -- node[midway, fill=white, rounded corners, draw=black]{ii} (-5,0.5);
\draw[<-] (-6.5,-4) -- node[midway, fill=white, rounded corners, draw=black]{ii }(-5,-4);
\draw[<-] (-6.5,-8.5) -- node[midway, fill=white, rounded corners, draw=black]{ii} (-5,-8.5);

\draw[->] (-2,-2.5) -- node[midway, fill=white, rounded corners, draw=black]{iii}  (-2,-3.5);
\draw[<-] (-2,-4.5) -- node[midway, fill=white, rounded corners, draw=black]{iii}  (-2,-5.5);
\draw (-12.5,-15.75) -- (-12.5,-15.25);
\node at (-7.5,-14.75) {$x_a$};
\end{tikzpicture} }
    \caption{Describing the moduli space of pre-broken regular treed disks. Here $\underline \Gamma$ is the combinatorial labelled type between two critical points $x_1$ and $x_2$ with two interior vertices. $\underline \Gamma_{\{t_a\}}$ and $\underline \Gamma_{\{t_b\}}$ have the same underlying combinatorial type, but additionally have pre-breakings at $t_a$ and $t_b$ respectively, labeled with classes $x_a, x_b\in \Crit(h)$. $\underline\Gamma', \underline \Gamma^a$, and $\underline \Gamma^b$ are broken types, whereas $\underline \Gamma_{1a},\underline \Gamma_{a2},\underline \Gamma_{1b}$ and $\underline \Gamma_{b2}$ are combinatorial types between $x_1, x_2, x_a$ and $x_b$ with indices indicating the labels. The points marked  $1-6$ refer to the pseudoholomorphic treed disks which appear in \cref{fig:pseudoholomorphicmoduli}.}
    \label{fig:labelledmodulie}
\end{figure}
\begin{figure}
    \centering
    \scalebox{.6}{\begin{tikzpicture}\begin{scope}[]
\begin{scope}[]
\begin{scope}[]

\draw[thick, dotted] (-12.5,-23.5) .. controls (-11.5,-23.5) and (1.5,-23.5) .. (2.5,-23.5) .. controls (4,-23.5) and (4,-17.5) .. (2.5,-17.5) .. controls (1.5,-17.5) and (-11.5,-17.5) .. (-12.5,-17.5);
\node[right] at (3.75,-20.5) {$\mathcal M_{\mathcal P}(X, L, D, \underline \Gamma)$};

\node[fill=black, circle, scale=.5] at (-12.5,-17.5) {};
\node[fill=black, circle, scale=.5] at (-12.5,-23.5) {};
\node[left] at (-12.5,-17.5) {};
\node[left] at (-12.5,-23.5) {};
\end{scope}
\begin{scope}[shift={(-11,-22)}]
\draw[fill=white]  (-1.5,3) rectangle (2.5,-2);
\node at (-1,-1.5){$L$};
\draw (-1.5,0.5) -- (2.5,0.5) (-1.5,1.5) -- (2.5,1.5);
\draw[fill opacity=.5,fill= gray!20]  (-0.5,1.5) ellipse (0.5 and 0.5);
\draw[fill opacity=.5,fill= gray!20]  (-0.5,-0.5) ellipse (0.5 and 0.5);
\draw (-0.5,1) -- (-0.5,0);
\draw (-0.5,-1) -- (0.5,-1.5);
\draw (-0.5,2) -- (0.5,2.5);

\node at (-0.5,0.5) {$\times$};
\node[below right] at (-0.5,0.5) {$x_a$};
\node[right] at (0.5,2.5) {$x_1$};
\node[below] at (0.5, -1.5) {$x_2$};
\node at (0.5,2.5) {$\times$};
\node at (0.5,-1.5) {$\times$};
\node[above] at (2,1.5) {$H_{x_b}$};
\node[below] at (-1,0.5) {$H_{x_a}$};
\node at (1.5,1.5) {$\times$};
\node[below right] at (1.5,1.5) {$x_b$};
\node at (-1,2.5) {$u_1$};
\end{scope}
\begin{scope}[shift={(0,-16)}]
\draw[fill=white]  (-1.5,3) rectangle (2.5,-2);
\node at (-1,-1.5){$L$};
\draw (-1.5,1) -- (2.5,1) (-1.5,2) -- (2.5,2);
\draw[fill opacity=.5,fill= gray!20]   (0.5,1.5) ellipse (0.75 and 0.75);
\draw[fill opacity=.5,fill= gray!20]   (0.5,0) ellipse (0.5 and 0.5);
\draw (0.5,0.75) -- (0.5,0.5);
\draw (0.5,-0.5) -- (0.5,-1.5);
\draw (0.5,2.25) -- (0.5,2.5);
\node at (-1,2.5) {$u_4$};

\node at (-0.5,1) {$\times$};
\node[below right] at (-0.5,1) {$x_a$};
\node[right] at (0.5,2.5) {$x_1$};
\node[below] at (0.5, -1.5) {$x_2$};
\node at (0.5,2.5) {$\times$};
\node at (0.5,-1.5) {$\times$};
\node[above] at (2,2) {$H_{x_b}$};
\node[below] at (-1,1) {$H_{x_a}$};
\node at (1.5,2) {$\times$};
\node[below right] at (1.5,2) {$x_b$};
\end{scope}
\begin{scope}[shift={(-5.5,-16)}]
\draw[fill=white]  (-1.5,3) rectangle (2.5,-2);
\node at (-1,-1.5){$L$};
\draw (-1.5,1) -- (2.5,1) (-1.5,2) -- (2.5,2);

\draw[fill opacity=.5,fill= gray!20]   (1.5,1) ellipse (0.5 and 0.5);
\draw[fill opacity=.5,fill= gray!20]   (1.5,-0.5) ellipse (0.5 and 0.5);
\draw (1.5,0.5) -- (1.5,0);
\draw (1.5,-1) -- (0.5,-1.5);
\draw (1.5,1.5) -- (0.5,2.5);
\node at (-1,2.5) {$u_5$};

\node[below left] at (1,2) {$t_b$};
\node[circle, fill, scale=.5] at (1,2) {};
\node at (-0.5,1) {$\times$};
\node[below right] at (-0.5,1) {$x_a$};
\node[right] at (0.5,2.5) {$x_1$};
\node[below] at (0.5, -1.5) {$x_2$};
\node at (0.5,2.5) {$\times$};
\node at (0.5,-1.5) {$\times$};
\node[above] at (2,2) {$H_{x_b}$};
\node[below] at (-1,1) {$H_{x_a}$};
\node at (1.5,2) {$\times$};
\node[below right] at (1.5,2) {$x_b$};
\end{scope}
\begin{scope}[shift={(-11,-16)}]
\draw [fill=white] (-1.5,3) rectangle (2.5,-2);
\node at (-1,-1.5){$L$};
\draw (-1.5,1) -- (2.5,1) (-1.5,2) -- (2.5,2);
\draw[fill opacity=.5,fill= gray!20]   (1.5,1) ellipse (0.5 and 0.5);
\draw[fill opacity=.5,fill= gray!20]   (1.5,-0.5) ellipse (0.5 and 0.5);
\draw (1.5,0.5) -- (1.5,0);
\draw (1.5,-1) -- (0.5,-1.5);
\node at (-1,2.5) {$u_6$};

\node at (-0.5,1) {$\times$};
\node[below right] at (-0.5,1) {$x_a$};
\node[right] at (0.5,2.5) {$x_1$};
\node[below] at (0.5, -1.5) {$x_2$};
\node at (0.5,2.5) {$\times$};
\node at (0.5,-1.5) {$\times$};
\node[above] at (2,2) {$H_{x_b}$};
\node[below] at (-1,1) {$H_{x_a}$};
\node at (1.5,2) {$\times$};
\node[below right] at (1.5,2) {$x_b$};
\end{scope}
\begin{scope}[shift={(0,-22)}]
\draw[fill=white]  (-1.5,3) rectangle (2.5,-2);
\node at (-1,-1.5){$L$};
\draw (-1.5,0.5) -- (2.5,0.5) (-1.5,1.5) -- (2.5,1.5);
\draw[fill opacity=.5,fill= gray!20]  (0.5,1.15) ellipse (0.5 and 0.5);
\draw[fill opacity=.5,fill= gray!20]  (0.5,-0.35) ellipse (0.5 and 0.5);
\draw (0.5,0.65) -- (0.5,0.15);
\draw (0.5,-0.85) -- (0.5,-1.5);
\draw (0.5,1.65) -- (0.5,2.5);
\node at (-1,2.5) {$u_3$};

\node at (-0.5,0.5) {$\times$};
\node[below right] at (-0.5,0.5) {$x_a$};
\node[right] at (0.5,2.5) {$x_1$};
\node[below] at (0.5, -1.5) {$x_2$};
\node at (0.5,2.5) {$\times$};
\node at (0.5,-1.5) {$\times$};
\node[above] at (2,1.5) {$H_{x_b}$};
\node[below] at (-1,0.5) {$H_{x_a}$};
\node at (1.5,1.5) {$\times$};
\node[below right] at (1.5,1.5) {$x_b$};
\node[below right] at (0.5,0.5) {$t_a$};
\node[circle, fill, scale=.5] at (0.5,0.5) {};
\end{scope}
\begin{scope}[shift={(-5.5,-22)}]
\draw[fill=white]  (-1.5,3) rectangle (2.5,-2);
\node at (-1,-1.5){$L$};
\draw (-1.5,0.5) -- (2.5,0.5) (-1.5,1.5) -- (2.5,1.5);
\draw[fill opacity=.5,fill= gray!20]  (0,1.5) ellipse (0.5 and 0.5);
\draw[fill opacity=.5,fill= gray!20]  (0,-0.5) ellipse (0.5 and 0.5);

\draw (0,1) -- (0,0);
\draw (0,-1) -- (0.5,-1.5);
\draw (0,2) -- (0.5,2.5);
\node at (-1,2.5) {$u_2$};

\node at (-0.5,0.5) {$\times$};
\node[below right] at (-0.5,0.5) {$x_a$};
\node[right] at (0.5,2.5) {$x_1$};
\node[below] at (0.5, -1.5) {$x_2$};
\node at (0.5,2.5) {$\times$};
\node at (0.5,-1.5) {$\times$};
\node[above] at (2,1.5) {$H_{x_b}$};
\node[below] at (-1,0.5) {$H_{x_a}$};
\node at (1.5,1.5) {$\times$};
\node[below right] at (1.5,1.5) {$x_b$};
\node[above right] at (0,0.5) {$t_a$};
\node[circle, fill, scale=.5] at (0,0.5) {};
\end{scope}
\draw  plot[smooth, tension=.7] coordinates {(-10.5,-13.5)};
\draw (-10.5,-13.5) .. controls (-10.2,-13.75) and (-9.5,-13.7) .. (-9.5,-14) .. controls (-9.5,-14.5) and (-9.5,-14) .. (-9.5,-14.5);

\end{scope}

\end{scope}

\node[circle, draw=black] at (-9,-23.5) {1};
\node[circle, draw=black] at (-3.5,-23.5) {2};
\node[circle, draw=black] at (2,-23.5) {3};
\node[circle, draw=black] at (2,-17.5) {4};
\node[circle, draw=black] at (-3.5,-17.5) {5};
\node[circle, draw=black] at (-9,-17.5) {6};
\end{tikzpicture} }
    \caption{An example of the moduli space of labeled pseudoholomorphic pearly flow lines of type $\underline \Gamma$. The moduli space is represented by the dotted line, and the diagrams represent the image of $u_i(T)$ and $u_i(\partial S)$ inside of $L$ for various maps $u_i$. We also mark critical points $x_1, x_2, x_a, x_b$ of a Morse function $h:L\to \RR$, as well as level sets $H_{x_i}$. The projection $\underline C_P(\mathcal M^{un}_{\mathcal P}(X, L, D, \underline \Gamma))$ to the moduli space of types (drawn in \cref{fig:labelledmodulie}) is represented by the dotted line in that figure, with the circles numerals 1-6 identifying the combinatorial type of each of these pseudoholomorphic treed disks. }
    \label{fig:pseudoholomorphicmoduli}
\end{figure}

\subsection{Perturbation Datum and Coherence}
\label{subsec:perturbationdatum}
A perturbation datum for type $\underline \Gamma_P$ \cite[Definition 4.10]{charest2015floer}  is an assignment of domain and pre-breaking dependent Morse function (i.e. map $h_\Gamma: \overline {\mathcal T}(\underline \Gamma_P)\times L\to \RR$) and domain and pre-breaking dependent tame almost complex structure ($J_\Gamma: \overline {\mathcal S}(\underline \Gamma_P) \times X \to \End(TX)$) to each pre-broken treed disk $\underline C_P$. These are required to agree with the prescribed Morse function $h: L\to \RR$ and tame almost complex structure $J: TX\to TX$ near the boundaries of the edges and disks. We will denote perturbation datum for type $\underline \Gamma_P$ by $\mathcal P(\underline \Gamma_P)$, so that 
\[\mathcal P(\underline \Gamma_P)(\underline C_P, x)=\left\{ \begin{array}{cc}
    J(\underline \Gamma_P)(\underline C_P,x)\in \End(TX)& \text{ if $(\underline C_P,x)\in \overline {\mathcal S}(\underline \Gamma_P)$ }\\
    h(\underline \Gamma_P)(\underline C_P, x)\in C^\infty(L, \RR)&  \text{ if $(\underline C_P, x)\in \overline {\mathcal T}(\underline \Gamma_P)$}
\end{array}\right. \] 

\begin{definition}
A \emph{stabilizing Morse function} is a Morse function $h: L\to \RR$ whose critical values are distinct; for $x_i\in \Crit(h)$, denote the level set of $h$ containing the critical point $x_i$ by  
$H_{x_i}:=\{h^{-1}(h(x_i))\}.$

A perturbation datum $\mathcal P(\underline \Gamma_P)$ is \emph{stabilizing with respect to $D$} if it satisfies the condition of \cite[Definition 4.26]{charest2015floer}; this roughly states that every solution to the $\mathcal P(\underline \Gamma_P)$-perturbed Floer equation will meet the stabilizing divisor in the appropriate number of intersection points (see \cref{subsec:cobordismstablebackground}).
The perturbations of almost complex structure must be chosen in such a way that the $D$ remains an almost complex hypersurface.

A perturbation datum $\mathcal P(\underline \Gamma_P)$ is stabilizing with respect to  $h$ if, away from the critical points, the gradient $\nabla h(\underline \Gamma_P)(C_P, x)$ is transverse to $h^{-1}(\Crit(h))$.
We say that $\mathcal P(\underline \Gamma_P)$ is stabilizing if it is stabilizing for both $D$ and $h$.

\label{def:stabilizingstuff}
\end{definition}

Given a regularizing perturbation datum, one can define $\mathcal P(\underline \Gamma_P)$- perturbed pseudoholomorphic treed disks, which are maps $u: \underline C_P\to X$ satisfying the conditions of \cite[Definition 4.13]{charest2015floer}. 
\begin{definition}[Modification of Definition 4.17, \cite{charest2015floer}]
    We say that a stable pre-broken pseudoholomorphic treed disk $u: \underline C_P\to X$ with boundary on $L$ is adapted to $D$ and $h$ if it satisfies the stable surface (a) and leaf (b) axiom  from \cite[Definition 4.17]{charest2015floer} and additionally
    \begin{description}
        \item[(c) (Pre-broken axiom)] Each pre-broken point $t_i\in \underline C_P$ with label $x_i$ is mapped to the $H_{x_i}$. Furthermore, each connected component of $u^{-1}(H_{x_i})$ contains a pre-broken point $t_i$ with label $x_i$.
    \end{description}
\end{definition}
One upshot of using labeled treed disks is that as $\underline \Gamma$ contains the data of which critical points of $h$ its flow-lines limit to, we may now define $\ind(\underline \Gamma)$ the expected dimension of the moduli space of labeled treed holomorphic disks. 
We denote by $\mathcal M_{\mathcal P}  (X, L, D,h, \underline \Gamma_P)$ the moduli space of $\mathcal P$-perturbed pseudoholomorphic pre-broken labeled treed disks which are adapted to $D$ and $h$.
Because the perturbations that we choose for Morse functions are still required to be transverse to the level sets $H_{x_i}$ for $x_i \in \Crit(h)$, transversality for the conditions imposed by the pre-broken axiom are automatically satisfied on the interior of $\mathcal M_{\mathcal P}(X, L, D, h, \underline \Gamma_P)$.
As before, we will denote  the forgetful map $\underline C_P: \mathcal M_{\mathcal P}(X, L, D,h, \underline \Gamma_P)\to \mathcal M(\underline \Gamma_P)$.
We say that the perturbation datum $\mathcal P(\underline \Gamma_P)$ is \emph{regular} if the moduli space $ \mathcal M_{\mathcal P}(X, L, D, h,\underline \Gamma_P)$ is cut out transversely.
\begin{remark}
    Suppose that $\underline \Gamma_{P}$ is a combinatorial type with pre-breakings. Let $\mathcal M_{\mathcal P}^{un}(X, L, D, \underline \Gamma_P)$ be the moduli space of pseudoholomorphic treed disks which are adapted to $D$, but only satisfy the weakening of the pre-broken axiom:
    \begin{description}
        \item[(c') (Weak pre-broken axiom)] Each pre-broken point $t_i\in \underline C_P$ with label $x_i$ is mapped to the $H_{x_i}$.
    \end{description}
    We note that if $P'\subset P$ is a subset of labels, and $\mathcal P(\underline \Gamma_P)$ arises from $\mathcal P(\underline \Gamma_{ P'})$ by pullback under forgetting pre-breakings, then every adapted $\mathcal P(\underline \Gamma_P)$ pseudoholomorphic disk is an example of a $\mathcal P(\underline \Gamma_{P'})$ weakly adapted pseudoholomorphic disk, simply given by forgetting the position of the marked points in $P\setminus P'$. Since the pre-broken axiom determines the positions of these marked points uniquely, we obtain an inclusion
    \[\mathcal M_{\mathcal P}(X, L, D,h, \underline \Gamma_P)\subset \mathcal M_{\mathcal P}^{un}(X, L, D, \underline \Gamma_{P'}).\]
    which is a diffeomorphism onto its image.
    \label{remark:weaklyadapted}
\end{remark}

We will want perturbations that are chosen consistently between different labeled combinatorial types. Let $\mathbb T$ denote the set of all pre-broken combinatorial types for $(L, h)$, and use $\mathbb I\subset \mathbb T$ to denote a subset of the pre-broken combinatorial types. 
A \emph{perturbation system}  $\mathcal P_{\mathbb I}$ is a choice of perturbation datum $\mathcal P_{\mathbb I}(\underline \Gamma_P)$ for every pre-broken combinatorial type $\underline \Gamma_P\in \mathbb I$.
If $\mathcal P_{\mathbb T}$ is a perturbation system for all pre-broken combinatorial types in $\mathbb T$, we will call $\mathcal P_{\mathbb T}$ a \emph{full perturbation system}.
For a fixed perturbation system $\mathcal P_{\mathbb I}$ and pre-broken combinatorial type $\underline \Gamma_P\in \mathbb I$, we write  $\mathcal M_{\mathcal P_{\mathbb I}}(X, L, D, h,\underline \Gamma_P)$ for the moduli space of $\mathcal P_{\mathbb I}(\underline \Gamma_P)$-perturbed pseudoholomorphic treed disks.

Adopting notation from the discussion preceding \cite[Theorem 4.19]{charest2015floer}, we write $\underline\Gamma'_{P'}\prec \underline \Gamma_P$ if there exists a
\begin{itemize}
    \item Morphism of combinatorial type $\underline\Gamma'_{P'}\to \underline\Gamma_P$ corresponding to collapsing edges/making an edge or weight finite or non-zero; or
    \item Morphism of combinatorial types $\underline \Gamma_P\to \underline\Gamma'_{P'}$ given by cutting edges; or
    \item  $\underline\Gamma_P=\underline\Gamma^1_{P_1}\cup \underline \Gamma^2_{P^2}$ is a disconnected type and $\underline\Gamma'_{P'}=\underline\Gamma^1_{P_1}$ or $\underline\Gamma^2_{P_2}$; or
    \item Morphism of combinatorial types $\underline \Gamma_P\to \underline \Gamma'_{P'}$ given by forgetting pre-broken labels.
\end{itemize}
\begin{definition}
If $\mathbb I$ satisfies the property that $\underline \Gamma_P\in \mathbb I$ and $\underline \Gamma'_P \prec \underline \Gamma_P$ implies $\underline \Gamma'_P\in \mathbb I$, then we say that $\mathbb I$ is \emph{downward closed.}
\label{def:downwardclosed}
\end{definition}

\begin{definition}[Following Definition 4.12 \cite{charest2015floer}] \label{def:coherence} We say that $\mathcal P(\underline \Gamma)$ and $\mathcal P(\underline \Gamma')$ are coherent over  $\underline \Gamma'_{P'}\prec \underline \Gamma_P$ if 
\begin{enumerate}
    \renewcommand{\theenumi}{\alph{enumi}}
    \item If there is a morphism of type  (collapsing edges/making an edge non-zero) $\underline \Gamma'_{P'}\to \underline\Gamma_P$ with $\underline \Gamma_P,\underline \Gamma'_{P'}\in \mathbb I$, then $\mathcal P(\underline \Gamma'_P)$ is the pullback of the perturbation $\mathcal P(\underline \Gamma_P)$; or
    \item If there exists a morphism of cutting edges $\underline\Gamma_P\to\underline \Gamma'_{P'}$, we require $\mathcal P(\underline \Gamma'_P)$ is the pushforward of the perturbation $\mathcal P(\underline \Gamma_P)$; or if $\underline \Gamma'_{P'}$ is a disjoint union of $\underline \Gamma^1_{P^1}$ and $\underline \Gamma^2_{P^2}$, then we require the perturbation arises as pullback.
    \item,(d) The Locality and Forgettable edges axiom from \cite[Definition 4.12]{charest2015floer}.
\end{enumerate}
In addition to these coherence conditions, we modify the condition for breaking (so that it now incorporates the pre-breaking point) and add a new condition for forgetting labels.
\begin{enumerate}
    \renewcommand{\theenumi}{\alph{enumi}}
    \addtocounter{enumi}{4}
    \item (Coherence under making an infinite edge finite) Suppose that $\underline \Gamma'_{P'}\to \underline\Gamma_P$ is a morphism of making an edge finite. 
    Recall we have an identification $\mathcal M(\underline \Gamma'_{P'})\times [0, \epsilon)\times (1/2, \ell(e)-1/2)\subset \mathcal M(\underline \Gamma_P)$, where $P'$ has one more pre-breaking than $P$.
    Given $t_s\in (1/2,\infty)\sqcup (-\infty, -1/2)$, consider the inclusion
    \begin{align*}
        i_{t_s}: \mathcal M(\underline \Gamma'_{P'})\into& \mathcal M(\underline \Gamma_P)\\
                C\mapsto (C, 0, t_s)
    \end{align*}
    Denote $i_{t_s}: \mathcal U(\underline\Gamma'_{P'})\into \mathcal U(\underline \Gamma_P)$ the corresponding inclusion of the universal curve. 
    We require that $\mathcal P(\underline \Gamma'_{P'})=i_{t_s}^*\mathcal P(\underline \Gamma_P)$ for all $t_s$, so that near the boundary the perturbation is not dependent on the position of $t_s$.\label{item:coherenceoverfinite}
    \item (Coherence under forgetting labels) Whenever $P'\subset P$, so that  we have a forgetful map  $\mathcal U^{P'<1/4}(\underline \Gamma_P)\to \mathcal U^{P\setminus P'}(\underline \Gamma_{P\setminus P'})$, we require the perturbation datum arises as pullback over the forgetting pre-markings map, that is
    \[\mathcal P|_{\mathcal U^{P'<1/4}}(\underline \Gamma_P)= \res_{P'\subset P}^*\mathcal P(\underline \Gamma_{P\setminus P'})).\]
    
\end{enumerate}
We say that $\mathcal P_{\mathbb I}$ is coherent if $\mathbb I$ is downward closed, and $\mathcal P_{\mathbb I}$ is coherent over all relations $ \underline \Gamma'_{P'}\prec \underline \Gamma_P,\in \mathbb I$.
\end{definition}
\begin{remark}
    The revised (coherence under making an infinite edge finite) condition states that if $t_i$ belongs to an edge that is about to break, then perturbations do not depend on the position of $t_i$.

    Note that (coherence under forgetting labels) means that near the region where a pre-breaking $t_i$ approaches the boundary of an edge, the perturbation system is not allowed to depend on the position of the pre-breaking $t_i$.
\end{remark}

\subsection{From coherent perturbations to Floer cohomology}
\label{subsection:coherencetofloer}
We say that a stabilizing perturbation system $\mathcal P_{\mathbb I}$ is \emph{regular} if it is regular for all labeled types $\underline \Gamma_P\in \mathbb I$ with   $\ind(\underline \Gamma_P)\leq 1$.
Suppose we have a regular coherent perturbation system, then by (Coherence under forgetting labels) and \cref{remark:weaklyadapted},
\[ \mathcal M_{\mathcal P_{\mathbb I}}(X, L, D, h, \underline \Gamma_P)|_{\mathcal M^{P'<1/4}(\underline \Gamma_P)}\subset \mathcal M_{\mathcal P_{\mathbb I}}^{un}(X, L, D, \underline \Gamma_{P\setminus P'}).\]
Furthermore, when these spaces are regularly cut out, this is a diffeomorphism onto its image.
We say that for pre-breakings $P_1, P_2$ and curves $u_i\in  \mathcal M_{\mathcal P_{\mathbb I}}(X, L, D, h, \underline \Gamma_{P_i})|_{\mathcal M^{P<1/4}(\underline \Gamma)}$ that   $u_1\sim u_2$ if they have the same image in $\mathcal M_{\mathcal P_{\mathbb I}}^{un}(X, L, D,  \underline \Gamma)$. 
The coherence under forgetting labels property allows us to construct a moduli space of $\mathcal P_{\mathbb I}$-perturbed pseudoholomorphic disks which do not have boundary components arising from pre-breakings moving to the boundary of flow lines (\cref{subsub:newboundarytypes}:\cref{subsubitem:newboundarytypes}). In summary:
\begin{claim}
    Suppose that $\mathcal P_{\mathbb I}$ is a regular coherent perturbation system. For labeled type $\underline \Gamma$, we can define a  moduli space
    \[\mathcal M_{\mathcal P_{\mathbb I}}(X, L, D, \underline \Gamma):=\bigcup_{\text{labellings $P$ for $\underline \Gamma$}} \mathcal M_{\mathcal P_{\mathbb I}}(X, L, D,h ,\underline \Gamma_P) / \sim\]
    which is a smooth manifold.
\end{claim}
See \cref{fig:pseudoholomorphicmoduli} in conjunction with \cref{fig:labelledmodulie} for how $\mathcal M_{\mathcal P_{\mathbb I}}(X, L, D, \underline \Gamma)$ is assembled from charts consisting of pre-broken types.

\subsubsection{Extending perturbation systems}
When proving the existence of a full perturbation system it is desirable to construct a regular coherent perturbation system for some small collection of types $\mathbb I$, and then extend this regular coherent perturbation system to a full perturbation system.
\begin{definition} Let $\mathbb I$ be a collection of types, and $\mathcal P_{\mathbb I}$ be a perturbation system. We say that $\mathcal P_{ \mathbb J}$ extends $\mathcal P_{\mathbb I}$ if $\mathbb I\subset \mathbb J$ and for every $\underline \Gamma_P\in \mathbb I$
        \[\mathcal P_{\mathbb I}(\underline \Gamma_P)= \mathcal P_{\mathbb J}(\underline \Gamma_P).\]
\end{definition}
We will assume that \cite[Theorem 4.19]{charest2015floer} extends to pre-broken treed disks.
\begin{assumption}[Existence of Pearly Model for Labeled types ]
    Let $L\subset X$ be a Lagrangian submanifold, and $D\subset X$ be a weakly stabilizing divisor. Suppose we have already picked $\mathcal P_{\mathbb I}$  a regular coherent perturbation system for a downward closed set of combinatorial types $\mathbb I$. Then there exists a regular full coherent perturbation system $\mathcal P_{\mathbb T}$ extending $\mathcal P_\mathbb{I}$. Additionally, $\mathcal P_{\mathbb T}$ can be chosen so that whenever we have Gromov compactness for choices of perturbations and pseudoholomorphic treed disks and $\ind(\underline \Gamma)\leq 1$, the moduli spaces of a regularly coherent and weakly stabilized perturbation system have appropriate tubular neighborhoods providing a compactification. The orientations of these neighborhoods agree with the boundary stratification(following \cite[Theorem 4.19]{charest2015floer}):
    \begin{equation}
        \partial \overline {\mathcal M}_{\mathcal P_{\mathbb T}}(X, L, D, \underline \Gamma)=\bigcup_{\underline \Gamma^1\circ \underline \Gamma^2=\underline \Gamma} \overline{\mathcal M}_{\mathcal P_{\mathbb T}}(X, L, D, \underline \Gamma^1)\times \overline{\mathcal M}_{\mathcal P_{\mathbb T}}(X, L, D, \underline \Gamma^2)
    \end{equation}
    where $\underline \Gamma^2\circ \underline \Gamma^1$ is the concatenation of labeled trees where the outgoing label of $\underline \Gamma^1$ matches an incoming label for $\underline \Gamma^2$.
    \label{assum:compact}
\end{assumption}
The  differences between \cite[Theorem 4.19]{charest2015floer} and \cref{assum:compact} come from pre-breakings. With regards to coherence over the morphism of making an infinite edge finite (which has been modified), we note that the new coherence condition (\cref{def:coherence}:\cref{item:coherenceoverfinite}) forces the perturbation to be independent of the position of pre-breaking in a neighborhood of where breaking occurs. As a consequence, the argument for producing the boundary stratification is no different than in the domain (and label independent) setting. 

\subsubsection{From Coherence to $A_\infty$ relations}
Let $\mathcal P_{\mathbb T}$ be a full coherent regular perturbation system for $(X, L, D)$.
For $\beta\in H_2(X, L)$, let $ {\mathcal M}_{\mathcal P_{\mathbb T}}(X, L, D, \underline \Gamma,\beta)$ be the set of pseudoholomorphic treed disks where the sum of the homology classes of the disk components is $\beta$.  For fixed $\underline x$ a set of labels, and $\beta\in H_2(X, L)$, we denote by
\[
    \overline{\mathcal M}_{\mathcal P_{\mathbb T}}(X, L, D, \underline x,  \beta)=\bigcup_{\underline \Gamma \text{with labels $\underline x$}}\overline {\mathcal M}_{\mathcal P_{\mathbb T}}(X, L, D, \underline \Gamma,\beta)/\sim
\]
where these spaces are glued along their matching boundary components (corresponding to making an edge length non-zero).
Whenever these moduli spaces have expected dimension less than or equal to one, we have a boundary stratification
\begin{equation}
\partial \overline {\mathcal M}_{\mathcal P}(X, L, D, \underline y, \beta)=\bigcup_{\substack{\underline x_1\circ_k \underline x_2=\underline y\\\beta_1+\beta_2=\beta}} \mathcal M_{\mathcal P_{\mathbb T}}(X, L, D, \underline x_1, \beta_1)\times \mathcal M_{\mathcal P_{\mathbb I}}(X, L, D, \underline x_2, \beta_2)
\end{equation}
where $\underline x_1\circ_k \underline x_2=\underline y$ is the concatenation of the outgoing label of $\underline x_2$ with the $k$th incoming label of $\underline x_1$.
The count of treed disks in these moduli spaces allows us to construct the pearly model of $L$. 
\begin{definition}[Following Definition 4.28 \cite{charest2015floer}]
    Let $L$ be a Lagrangian submanifold as in \cref{assum:compact}, and let $\mathcal P_{\mathbb T}$ be a full coherent regular perturbation system. The \emph{pearly Floer cohomology} of $L$ is the $A_\infty$ algebra $\CF(L):=\Lambda\langle \Crit(h)\rangle$, whose $A_\infty$  structure coefficients are given by (\cite[Equation 4.34]{charest2015floer})
    \[\langle m^k(x_1, \ldots, x_k), x_0\rangle :=\sum_{\beta\in H_2(X, L)} (-1)^\heartsuit (\sigma(u)!)^{-1}T^{\omega(\beta)} \cdot \# \mathcal M_{\mathcal P}(X, L, D, \underline x, \beta)\]
    where $\underline x=\{x_0;x_1, \ldots x_k\}$, $\#$ is the count of $0$-dimensional components of the moduli space with orientation, $\sigma(u)$ denotes the number of interior marked points, and $\heartsuit=\sum_{i=1}^k i |x_i|$.
\end{definition}
\subsubsection{Remarks on relation to abstract perturbation techniques}
\label{rem:abstractperturbations}
    Given the large number of different approaches to regularizing Floer theory, we provide some context for where this framework fits in with respect to other constructions.
    Regularization techniques can be characterized by how ``uniform'' or ``dependent'' they are. 
    Generally, the uniform regularizations retain much of the underlying geometric data of the symplectic manifold and almost complex structure chosen, and so they can be useful for computations. However, this must be balanced against the additional flexibility that comes with using dependent regularizations, which is necessary for achieving regularity in some settings and can be advantageous for proving abstract properties of Floer cohomology.
    We give a list of different regularization techniques below, ordered from ``most uniform'' to ``most dependent''.
    \begin{itemize}
        \item Global perturbations of almost complex structures place us in the most geometric setting and therefore provide us with the best tools for explicit computations of moduli spaces of pseudoholomorphic treed disks. One can hope with the right choice of global perturbations that techniques like the open-mapping principle still hold, and thereby use geometric principles to characterize the moduli spaces contributing to the structure coefficients of the $A_\infty$ structure. Problematically, global perturbations are not flexible enough to give us regularization in examples we consider (in particular, the non-monotone setting).
        \item Domain-dependent perturbations give us all the flexibility we need to construct perturbations for defining the pearly Floer cochains in the non-monotone setting. However, it is difficult to explicitly construct a domain-dependent perturbation. 
        \item Domain and label dependent perturbations (what we use here) give us slightly more flexibility before. The upshot is that one may be able to use geometric perturbations for particular labeled combinatorial types contributing to particular structure constants in the Floer cohomology.
        \item Abstract perturbation techniques (from polyfolds and Kuranishi structures) give us the greatest flexibility. The gain is that it is possible to be extremely precise about what kinds of perturbation you want to take. However, the perturbations will generally not come from any kind of geometric source, making some portions of the theory more difficult to work with. 
    \end{itemize}
    It is a general expectation that one should be able to use regularizing perturbation datum from one of these ``uniform'' regularization techniques inside the framework of a ``dependent'' regularization technique.
    For instance, one could construct a ``domain-dependent'' perturbation by just defining it everywhere to be given by a global perturbation of almost complex structure. 

    The trick, which we now exploit, is to use uniform perturbations where you need to use geometric properties for computations, then later use dependent perturbations to achieve regularity and coherence more broadly.\footnote{In polyfold theory, Katrin Wehrheim explained this once as \emph{the Obamacare principle}, based on the idea that ``If you like the plan you have, you can keep it'' \cite{obamaspeech}. }
    For our applications, domain and label-dependent perturbations prove to be flexible enough that we can choose the perturbations we want where we want them.

    \subsection{Application of domain and label dependent perturbations: explicit computation of structure coefficients.}
    \label{subsec:geometricflowlines}
    We now give an application of using domain and label-dependent perturbations. 

    Consider the situation from \cref{subsub:domainandlabelsummary}. Recall that the hypothesis stated that there was a unique $J^0$=regular holomorphic disk $u_{ex}$ with one intersection with the stabilizing divisor.  We assumed that $u_{ex}$ intersected all upward and downward flow spaces of critical points transversely. 
    We also looked at two Morse critical points $x_1, x_0$ in degrees $1$ and $2$ respectively without a flow-line between them. From this we concluded that treed disks with combinatorial type in
    \[\mathbb I^{\leq 1}_{*; x_0}:=\left\{\underline \Gamma_P\;|\;\ind(\underline \Gamma_P)\leq 0, \parbox{6cm}{Input is empty or $x_1$, output is $x_0$, at most 1 interior marked point} \right\}\]
    are regularly cut for $\mathcal P^0$ the trivial perturbation determined by $J^0$ and $h$.
 We can additionally prove from the hypothesis that there are no broken flow trees with output on $x_0$ and one interior marked point. 
    
     \textbf{Case 1: $\ver(\underline \Gamma)=0$.}  Let $\underline \Gamma$ be the labeled combinatorial type of treed disks with 1 vertex labeled with the class of $u_{ex}$, and root labeled with $x_0$ , as in \cref{fig:m0breakings}. As the disk $u$ has the minimal number of stabilizing interior marked points, the only kinds of broken configurations which may occur are those where the semi-infinite edge breaks (as indicated by $\underline \Gamma'$ in the right-hand side of \cref{fig:m0breakings}). 
     As the disk $u_{ex}$ is transverse to all upwards and downward flow spaces and is regular, the configuration with output on $y$ is a regular configuration; this implies that $y$ has degree $2$. This means that the flow-line between $x_0$ and $y_0$ has expected dimension $-1$; since the function $h$ is Morse-Smale, there are no such flow lines. Therefore, there are no broken configurations of the type $\underline \Gamma'$. 
     
     \textbf{Case 2: $\ver(\underline \Gamma)=1$.}  In the case that we are looking at pearly trajectories between $x_0$ and $x_1$, the combinatorial types of breaking which can occur (due to the minimal interior marked point conditions) are listed in \cref{fig:breakings}.
     By the same reasoning as the case with one output, the first two types of breaking (of combinatorial type) $\underline\Gamma^1$ and $\underline\Gamma^2$ cannot occur. Without additional conditions, the combinatorial type $\underline\Gamma^3$ or $\underline\Gamma^4$ may occur (for instance, if there are non-regular Morse flow trees with inputs $x_1, y$ and output $x_0$). As we assumed that there are no Morse flow lines from $x_1$ to $x_0$, there are no unperturbed Morse flow trees that contain $x_1$ as input and $x_0$ as an output. Therefore the moduli space of unperturbed pearly flow trees of type $\underline\Gamma^3$ or $\underline\Gamma^4$ is empty.
     \begin{figure}
         \centering 
         \begin{subfigure}{\linewidth}
             \centering
         \scalebox{.75}{\begin{tikzpicture}

\begin{scope}[shift={(-9,-2.5)}]
\draw[fill=gray!20]  (2,2.5) ellipse (0.5 and 0.5);
\node at (2, 2.5) {$\times$};
\draw (2,2) -- (2,1);
\draw  (3,0.5) rectangle (1,4);

\node[right] at (2,1) {$x_0$};
\node[circle, fill=black, scale=.2] at (2,1) {};

\node at (1.5,3.5) {$\underline\Gamma$};

\end{scope}

\begin{scope}[shift={(-5.5,-1.5)}]
\draw[fill=gray!20]  (3.5,1.5) ellipse (0.5 and 0.5);
\draw (3.5,1) -- (3.5,0);
\node[right] at (3.5,0) {$x_0$};
\node[right] at (3.5,0.5) {$y$};
\node[circle, fill=black, scale=.2] at (3.5,0) {};
\node[circle, fill=black, scale=.2] at (3.5,0.5) {};
\draw  (2.5,3) rectangle (4.5,-0.5);
\node at (3.5,1.5) {$\times$};

\node at (3,2.5) {$\underline\Gamma'$};

\end{scope}

\draw [decorate, decoration={brace, amplitude=10pt,mirror,raise=4pt},yshift=0pt]
(-0.5,-2) -- (-0.5,1.5) node [black,midway,xshift=0.8cm,right,text width=5cm] {
This breaking cannot occur for index reasons.};
\draw[->] (-5.5,0) -- node[above, midway]{Breaking} (-3.5,0);
\end{tikzpicture} }
         \caption{Possible breaking of type with output on $x_0$.}
         \label{fig:m0breakings}
         \end{subfigure}
         \begin{subfigure}{\linewidth}
             \centering
         \scalebox{.75}{
\usetikzlibrary{matrix, arrows,  decorations.markings,  patterns,  plotmarks, decorations.pathreplacing}
\begin{tikzpicture}

\begin{scope}[shift={(-9,-2.5)}]
\draw[fill=gray!20]  (2,2.5) ellipse (0.5 and 0.5);
\node at (2, 2.5) {$\times$};
\draw (2,2) -- (2,1);
\draw (2,4) -- (2,3);
\draw  (3,0.5) rectangle (1,4.5);

\draw (2,2) -- (2,1.5);
\node[right] at (2,4) {$x_1$};
\node[right] at (2,1) {$x_0$};
\node[circle, fill=black, scale=.2] at (2,4) {};
\node[circle, fill=black, scale=.2] at (2,1) {};

\node at (1.5,4) {$\underline\Gamma$};

\end{scope}

\begin{scope}[shift={(-1.5,-1.5)}]
\draw (2,3) -- (2,2.5);
\draw[fill=gray!20]  (2,1) ellipse (0.5 and 0.5);
\node at (2, 1) {$\times$};
\draw (2,0.5) -- (2,0);
\draw (2,2.5) -- (2,1.5);
\node[right] at (2,3) {$x_1$};
\node[right] at (2,0) {$x_0$};
\node[circle, fill=black, scale=.2] at (2,3) {};
\node[circle, fill=black, scale=.2] at (2,0) {};
\node[right] at (2,2.5) {$y$};
\node[circle, fill=black, scale=.2] at (2,2.5) {};

\draw  (1,3.5) rectangle (3,-0.5);

\node at (1.5,3) {$\underline\Gamma^2$};
\end{scope}

\begin{scope}[shift={(-5.5,-1.5)}]
\draw[fill=gray!20]  (3.5,1.5) ellipse (0.5 and 0.5);
\node at (3.5,1.5) {$\times$};
\draw (3.5,1) -- (3.5,0);
\draw (3.5,3) -- (3.5,2.5)  (3.5,2.5) -- (3.5,2);
\node[right] at (3.5,3) {$x_1$};
\node[right] at (3.5,0) {$x_0$};
\node[right] at (3.5,2.5) {$y$};
\node[circle, fill=black, scale=.2] at (3.5,3) {};
\node[circle, fill=black, scale=.2] at (3.5,0) {};
\node[circle, fill=black, scale=.2] at (3.5,2.5) {};
\draw  (2.5,3.5) rectangle (4.5,-0.5);

\node at (3,3) {$\underline\Gamma^1$};

\end{scope}

\begin{scope}[shift={(-2,-1.5)}]
\draw[fill=gray!20]  (5,2) ellipse (0.5 and 0.5);
\node at (5,2) {$\times$};
\draw (5,1.5) -- (6,0.5) -- (6,0);
\draw (6,3) -- (6,0.5);
\node[left] at (6,3) {$x_1$};
\node[circle, fill=black, scale=.2] at (6,3) {};
\node[left] at (6,0) {$x_0$};
\node[circle, fill=black, scale=.2] at (6,0) {};
\node[right] at (5.5,1) {$y$};
\node[circle, fill=black, scale=.2] at (5.5,1) {};
\draw  (6.5,-0.5) rectangle (4,3.5);
\node at (4.5,3) {$\underline\Gamma^3$};

\end{scope}

\begin{scope}[]
\draw[fill=gray!20]  (6.5,0.5) ellipse (0.5 and 0.5);
\node at (6.5,0.5) {$\times$};
\draw (6.5,0) -- (5.5,-1) -- (5.5,-1.5);
\draw (5.5,1.5) -- (5.5,-1);
\node[right] at (5.5,1.5) {$x_1$};
\node[right] at (5.5,-1.5) {$x_0$};
\node[circle, fill=black, scale=.2] at (5.5,1.5) {};
\node[circle, fill=black, scale=.2] at (5.5,-1.5) {};

\node[right] at (6,-0.5) {$y$};
\node[circle, fill=black, scale=.2] at (6,-0.5) {};

\node[circle, fill=black, scale=.2] at (6,-0.5) {};
\draw  (5,2) rectangle (7.5,-2);

\node at (7,1.5) {$\underline\Gamma^4$};
\end{scope}

\draw[->] (-5.5,0) -- node[above, midway]{Breaking} (-3.5,0);

\draw [decorate, decoration={brace, amplitude=10pt,mirror,raise=4pt},yshift=0pt]
(2,-2.5) -- (7.5,-2.5) node [black,midway,below, yshift=-1em,text width=5cm] {
This breaking cannot occur for geometric reasons.};

\draw [decorate, decoration={brace, amplitude=10pt,mirror,raise=4pt},yshift=0pt]
(-3,-2.5) -- (1,-2.5) node [black,midway,below, yshift=-1em,text width=5cm] {
This breaking cannot occur for index reasons.};
\end{tikzpicture} }
         \caption{Possible breakings of a pearly flow-line between $x_0$ and $x_1$ which has a minimal energy disk.}
         \label{fig:breakings}
         \end{subfigure}
         \caption{Possible breakings of flow trees between $x_1$ and $x_0$ with a single marked point.}
     \end{figure}

     From this we conclude that $\mathcal P^0_{\mathbb I^{\leq 1}_{*; x_0}}$ is regular for broken types in $\mathbb I^{\leq 1}_{*; x_0}$ as well. 
    
    \begin{definition}
        Let $\mathbb I$ be a collection of types, and $\mathcal P^0_{\mathbb I}$ be a perturbation system defined for combinatorial types in $\mathbb I$. 
       We say that  $\mathcal P_{\mathbb J}$ weakly agrees with $\mathcal P^0_{\mathbb I}$  if for every $\underline \Gamma\in \mathbb I\cap \mathbb J$ with $\ind(\underline \Gamma_P)\leq 0$, the perturbation data agree in a small neighborhood of the labelled combinatorial types contained in these moduli spaces. That is, we require that
       \[\underline C_P(\mathcal M_{\mathcal P^0_{\mathbb I}}(X, L, D, h,\underline \Gamma_P))=\underline C_P(\mathcal M_{\mathcal P_{\mathbb J}}(X, L, D, h,\underline \Gamma_P))\]
       and there exists an open neighborhood $ U\supset \underline C_p(\mathcal M_{\mathcal P_{\mathbb I}}(X, L, D, h,\underline \Gamma_P))$ such that for all $\underline C_P\in U$ and $x\in \underline C_P$,
       \[\mathcal P_{\mathbb J}(\underline \Gamma_P)(\underline C_P, x)=\mathcal P^0_{\mathbb I}(\underline \Gamma_P)(\underline C_P, x).\]

       We say that $\mathcal P_{\mathbb J}$ weakly extends $\mathcal P^0_{\mathbb I}$ if they weakly agree and $\mathbb I\subset \mathbb J$.
    \end{definition}
    
    \begin{lemma}
        Let $\mathcal P^0_{ \mathbb I^{\leq1}_{*; x_0}}$ be the trivial perturbation on set of combinatorial types from the above discussion. There exists a full regular coherent perturbation system $\mathcal P$ which is a weak extension of $\mathcal P^0_{\mathbb I^{\leq 1}_{*; x_0}}$.
        \label{lemma:geometricflowlines}
    \end{lemma}
    
    The remainder of the section will construct $\mathcal P$. 
    We describe the construction of the perturbation datum in steps. Let $\mathbb I^0_{y; z}$ denote the combinatorial types of flow lines with no disks from $y$ to $z$.
    For any $\underline \Gamma_P\in \bigcup_{y, z\in \Crit(h)}\mathbb I^0_{y;z}$, take $\mathcal P(\underline \Gamma_P)$ to be the trivial perturbation datum. The trivial perturbation is coherent. Since $h$ is Morse, $\mathcal P(\underline \Gamma_P)$ is regular.
    The choices made for $\mathcal P$ so far agree weakly with $\mathcal P^0_{\mathbb I^{\leq 1}_{*, x_1}}$.
    
    We now pick perturbation datum $\mathcal P(\underline \Gamma_P)$ for $\underline \Gamma_P\in \bigcup_{z\in \Crit(h)}\mathbb I^1_{z}$, the types of index 0 with 1 internal marked point and 1 output. 
    \begin{claim}
        Perturbation datum $\mathcal P(\underline \Gamma_P)$ can be picked for pre-broken combinatorial types $\underline \Gamma_P\in \mathbb I^1_{z}$ in such a way that it is regular, weakly agrees with $\mathcal P^0_{\mathbb I^{\leq 1}_{*, x_1}}$ on the combinatorial types $\underline \Gamma\in \mathbb I^1_{x_0}$, and is coherent with previously made choices over $\bigcup_{y, z\in \Crit(h)}\mathbb I_{y;z}$.   
    \end{claim}
    \begin{proof}
        The definition of the perturbation depends on the number of pre-breaking points (i.e. critical values of $h$) on the flow-line between the boundary of the disk $u_{ex}$ and critical point $x_0$. We discuss the case where there are 0 or 1 pre-breaking points; the remaining cases are similar.
        \begin{itemize}
            \item Suppose that the single $\mathcal P^0$-pseudoholomorphic curve of type $\underline \Gamma_P\in \mathbb I^1_{x_0}$ from \cref{fig:m0breakings} has no pre-breaking. Then $\{\mathbb I^1_{x_0}\}$ is downward closed, and we can choose perturbation datum for this type freely. Then choose perturbation datum for the rest of $\bigcup_{z}\mathbb I_z^1$.
            \item Suppose that the single  $\mathcal P^0$-pseudoholomorphic curve of type $\underline \Gamma_P\in \mathbb I^1_{x_0}$ has a single pre-breaking, let $\underline C_P$ be the pre-broken disk domain.
            We assume that this pre-breaking has distance of at least 1/4 from the disk (so that the coherence condition for forgetting pre-breakings does not hold near this pre-broken combinatorial type). 
            Choose $\mathcal P(\underline \Gamma'_{P'})$ for all $\underline \Gamma'_{P'}\in \bigcup_{z\in \Crit(h)}\mathbb I^1_{z}$ with no pre-breakings. We can choose this perturbation to be as close to $\mathcal P^0$ as desired.
            Having chosen these $\mathcal P(\underline \Gamma'_{P'})$, we now define $\mathcal P(\underline \Gamma_P)$. The moduli space $\mathcal M(\underline \Gamma_P)$ is a 1-dimensional ray that measures the distance of the pre-broken label to the marked point on the disk $u_{ex}$.
            
            Let $U\subset \mathcal M(\underline \Gamma_P)$ be a small neighborhood of $\underline C_P$. 
            Choose $\mathcal P(\underline \Gamma_P)$ which interpolates between  $\mathcal P^0(\underline \Gamma_P)$ over the neighborhood $U$ and the previously chosen perturbations $\mathcal P_{\underline \Gamma'_{P'}}$ as the distance of the pre-breaking goes to infinity. 
            Since 
            \begin{itemize}
                \item $\mathcal P(\underline \Gamma'_{P'})$ was chosen close to $\mathcal P^0$, 
                \item the  moduli spaces of $\mathcal P^0$- curves with domain in the complement of $U$ is empty, and 
                \item emptiness is preserved under small changes of perturbation datum,
            \end{itemize}
            we have that the moduli of $\mathcal P(\underline \Gamma_P)$-curves with domain belonging to the complement of $U$ is empty. So $\mathcal P(\underline \Gamma_P)$ agrees with $\mathcal P^0(\underline \Gamma_P)$ in a neighborhood of the domains which are represented by pseudoholomorphic treed disks, which is to say that $\mathcal P(\underline \Gamma)$ weakly agrees with $\mathcal P^0(\underline \Gamma_P)$.
        \end{itemize}
        The proof is similar for examples where there are several pre-breakings on the domain $\underline C_P$ representing the single $\mathcal P^0$-pseudoholomorphic curve of type $\mathbb I^1_{x_0}$.
    \end{proof}
    We now handle the types of Morse flow trees. 
    Denote by $\mathbb I^0_{x,y;z}$ the moduli space of Morse flow trees with inputs $x, y$ and output $z$. We pick perturbation datum $\mathcal P$ for types in $\bigcup_{x,y,z}\mathbb I^0_{x,y;z}$. The only coherence conditions we need to worry about are coherence conditions from Morse flow lines; in particular regular and coherent $\mathcal P$ can be chosen to weakly extend $\mathcal P^0$ on $\bigcup_y \mathbb I_{x_1, y; x_0}$. Here, obtaining a weak agreement is easy: any sufficiently small choice of perturbation $\mathcal P$ will do the trick, as $\mathcal M_{\mathcal P^0}(X, L, D, \underline \Gamma)=\emptyset$ for $\underline \Gamma\in \bigcup_y \mathbb I_{x_1, y; x_0}$. The same argument holds for types $\underline \Gamma\in \bigcup_y \mathbb I_{y, x_1; x_0}$.

    We now extend $\mathcal P$ to $\mathbb I^1_{x_1;x_0}$, the set of treed disks with input on $x_1$, output on $x_0$, and 1 interior marked point. This contains three combinatorial types which have expected dimension 0, which are highlighted in \cref{fig:remainingtypes}. On the moduli spaces of types $\underline \Gamma^2, \underline \Gamma^3$, we choose perturbation datum which satisfies:
\begin{itemize}
\item  the coherence conditions for $\mathcal P$ as we go to broken configurations and
\item agrees with $\mathcal P^0$ as we go towards the ghost-bubbled configuration which forms a common boundary with $\underline \Gamma^1$ (which is also forced by coherence).
\end{itemize}
 We note that if previous choices for $\mathcal P$ are chosen sufficiently small, $\mathcal M_{\mathcal P}(X, L, D,\underline \Gamma^i)$ will remain empty for $i\in 2,3$, as the emptiness of moduli spaces is an open condition in the space of perturbations.
    On $\underline \Gamma^1$, we take $\mathcal P(\underline \Gamma^1)=\mathcal P^0(\underline \Gamma^1)$.
    \begin{figure}
		\centering
        \begin{tikzpicture}[scale=.7]

    \begin{scope}[shift={(-10.5,-2)}]
    \draw[fill=red!20]  (3,0.5) rectangle (1,4.5);
    \draw[fill=gray!20]  (2,2.5) ellipse (0.5 and 0.5);
    \node at (2,2.5) {$\times$};
    \draw (2,2) -- (2,1);
    \draw (2,4) -- (2,3);
    
    \draw (2,2) -- (2,1.5);
    \node[right] at (2,4) {$x_1$};
    \node[right] at (2,1) {$x_0$};
    \node[circle, fill=black, scale=.2] at (2,4) {};
    \node[circle, fill=black, scale=.2] at (2,1) {};
    
    \node at (2,0) {$\underline\Gamma^1$};

    \end{scope}

    \begin{scope}[shift={(-12,0.5)}]
    \draw[fill=gray!20]  (6,0) ellipse (0.5 and 0.5);
    \node at (6,0) {$\times$};
    \draw (5.5,0) -- (5.5,-1) -- (5.5,-1.5);
    \draw (5.5,1.5) -- (5.5,-1);
    \node[right] at (5.5,1.5) {$x_1$};
    \node[right] at (5.5,-1.5) {$x_0$};
    \node[circle, fill=black, scale=.2] at (5.5,1.5) {};
    \node[circle, fill=black, scale=.2] at (5.5,-1.5) {};

    \draw  (5,2) rectangle (7,-2);

    \end{scope}
    
    \begin{scope}[shift={(-19,-1)}]
    \draw[fill=red!20]  (6.5,-0.5) rectangle (4.5,3.5);
    \draw[fill=gray!20]  (5.2,2.05) ellipse (0.5 and 0.5);
    \node at (5.2,2.05) {$\times$};
    \draw (5.2,1.55) -- (6,0.5) -- (6,0);
    \draw (6,3) -- (6,0.5);
    \node[left] at (6,3) {$x_1$};
    \node[circle, fill=black, scale=.2] at (6,3) {};
    \node[left] at (6,0) {$x_0$};
    \node[circle, fill=black, scale=.2] at (6,0) {};

    \node at (5,-1) {$\underline\Gamma^2$};
    
    \end{scope}
    
    \begin{scope}[shift={(-21.5,-1)}]
    \draw[fill=gray!20]  (5.25,2) ellipse (0.5 and 0.5);
    \node at (5.25,2) {$\times$};
    \draw (5.25,1.5) -- (6,0.5) -- (6,0);
    \draw (6,3) -- (6,0.5);
    \node[left] at (6,3) {$x_1$};
    \node[circle, fill=black, scale=.2] at (6,3) {};
    \node[left] at (6,0) {$x_0$};
    \node[circle, fill=black, scale=.2] at (6,0) {};

    \draw  (6.5,-0.5) rectangle (4.5,3.5);

    \end{scope}
    \begin{scope}[shift={(-16.5,-1)}]
    \draw[fill=gray!20]  (5.5,1.5) ellipse (0.5 and 0.5);
    \node at (5.5,1.5) {$\times$};
    \draw (6,1.5) -- (6,1.5) -- (6,0);
    \draw (6,3) -- (6,1.5);
    \node[left] at (6,3) {$x_1$};
    \node[circle, fill=black, scale=.2] at (6,3) {};
    \node[left] at (6,0) {$x_0$};
    \node[circle, fill=black, scale=.2] at (6,0) {};

    \draw  (6.5,-0.5) rectangle (4.5,3.5);

    \end{scope}
    
    \begin{scope}[shift={(-7,0.5)}]
    \draw[fill=gray!20]  (6.25,0.5) ellipse (0.5 and 0.5);
    \node at (6.25,0.5) {$\times$};
    \draw (6.25,0) -- (5.5,-1) -- (5.5,-1.5);
    \draw (5.5,1.5) -- (5.5,-1);
    \node[right] at (5.5,1.5) {$x_1$};
    \node[right] at (5.5,-1.5) {$x_0$};
    \node[circle, fill=black, scale=.2] at (5.5,1.5) {};
    \node[circle, fill=black, scale=.2] at (5.5,-1.5) {};

    \draw  (5,2) rectangle (7,-2);

    \end{scope}
    
    \begin{scope}[shift={(-9.5,0.5)}]
    \draw[fill=red!20]  (5,2) rectangle (7,-2);
    \draw[fill=gray!20]  (6.25,0.5) ellipse (0.5 and 0.5);
    \node at (6.25,0.5) {$\times$};
    \draw (6.25,0) -- (5.5,-1) -- (5.5,-1.5);
    \draw (5.5,1.5) -- (5.5,-1);
    \node[right] at (5.5,1.5) {$x_1$};
    \node[right] at (5.5,-1.5) {$x_0$};
    \node[circle, fill=black, scale=.2] at (5.5,1.5) {};
    \node[circle, fill=black, scale=.2] at (5.5,-1.5) {};

    \node at (6.5,-2.5) {$\underline\Gamma^3$};
    \end{scope}

    \node[circle, fill=black, scale=.2] at (-15.8928,0) {};
    \node[circle, fill=black, scale=.2] at (-1.1786,0) {};
    \node[left] at (-15.8928,0) {$y$};
    \node[right] at (-1.1786,0) {$y$};
    \draw (-16,-2.5) -- (-1,-2.5);
    \node[circle, fill=black, scale=.2] at (-16,-2.5) {};
    \node[circle, fill=black, scale=.2] at (-11,-2.5) {};
    \node[circle, fill=black, scale=.2] at (-6,-2.5) {};
    \node[circle, fill=black, scale=.2] at (-1,-2.5) {};
    \node at (-8.5,4.5) {Perturbations already prescribed, with empty moduli spaces};
\draw[->] (-8.5,4) .. controls (-8.5,3.5) and (-16,3) .. (-16,2.5);
\draw[->] (-8.5,4) .. controls (-8.5,3.5) and (-11,3) .. (-11,2.5);
\draw[->] (-8.5,4) .. controls (-8.5,3.5) and (-6,3) .. (-6,2.5);
\draw[->] (-8.5,4) .. controls (-8.5,3.5) and (-1,3) .. (-1,2.5);
    \node at (-8.5,-4) {Equip these with a small perturbation of $\mathcal P^0$ so that moduli space remains empty};
\draw[->] (-8.5,-3.5) .. controls (-8.5,-3) and (-13.5,-2.5) .. (-13.5,-1.5);
\draw[->] (-8.5,-3.5) .. controls (-8.5,-3) and (-3.5,-2.5) .. (-3.5,-1.5);
\node at (-16.5,3) {$\mathcal P$};
\node at (-11.5,3) {$\mathcal P^0$};
\node at (-5.5,3) {$\mathcal P^0$};
\node at (-0.5,3) {$\mathcal P$};
\end{tikzpicture}         \caption{The three remaining tree types}
        \label{fig:remainingtypes}
    \end{figure}
    
    We have now defined regular and coherent $\mathcal P$ on a downward closed set of types containing $\mathbb I^{\leq 1}_{*; x_0}$. By construction $\mathcal P$ is a weak extension of $\mathcal P^0_{\mathbb I^{\leq 1}_{*; x_0}}$. By \cite[Theorem 4.19]{charest2017floer}, we can extend this to a regular and coherent perturbation on all types.  \section{Stabilizing divisors and pearly model for Lagrangian cobordisms}
	\label{app:cobordismstabilizingdivisor}
In this appendix, we drop the requirement that $\omega(\pi_2(X))<0$. For simplicity of exposition, we assume that $X$ is compact,  $H_1(L)$ is torsion free, and that $[\omega]\in H^2(X, \ZZ)$.
\subsection{Stabilizing Divisors: Background and Summary}
\label{subsec:cobordismstablebackground}
We use $J_\tau(X, \omega)$ to denote the space of $\omega$-tame almost complex structures. 
A symplectic divisor is a symplectic hypersurface $D\subset X$. If $[D]$ is Poincar\'e dual to $k\cdot [\omega]$, we say that the degree of $D$ is $k$. We say that $J$ is adapted to $D$ if $J(TD)=TD$.

    A \emph{weakly stabilizing divisor} \cite[Definition 3.8]{charest2015floer} for a Lagrangian $L\subset X$ is a symplectic divisor $D\subset X$ disjoint from $L$ and for which there exists a $J_D\in \mathcal J_\tau(X, \omega)$ adapted to $D$ so that every $J_D$ holomorphic disk or sphere intersects $D$ in at least one point. 

    A divisor is \emph{of sufficiently large degree} for an almost complex structure $J_D$ and a Lagrangian $L$ if for all $J_D$-holomorphic spheres $u_{S^2}$ and disks $u_{D^2}$ with boundary on $L$,
    \begin{align*}
        PD([D])([u_{S^2}]) \geq &2 c_1(X)([u_{S^2}])+\dim(X)+1\\
        PD([D])([u_{D^2}])\geq &1.
    \end{align*}

\begin{lemma*}[Lemma 3.9 \cite{charest2017floer}]
    Let $L$ be a Lagrangian submanifold, and $J$ be an almost complex structure. There exists a constant $k_m$ so that for every $0<\theta<1$, there exists a $k_\theta>0$ such that for all $k>k_\theta$ we can find a $\theta$-approximately $J$-holomorphic divisor $D$ of degree $k_mk$  which is of sufficiently large degree for $L$. 
\end{lemma*}
Note that the complex structure $J_D$ for which $D$ is weakly stabilizing will likely not be the structure $J$ we start with.
To use weakly stabilizing divisors for the purposes of constructing open Gromov-Witten invariants, one needs the divisor to transversely intersect $J_D$-holomorphic curves; additionally, one would like an open set worth of $J_D$'s to use as domain-dependent perturbations. 

\begin{definition}[Definition 4.24 \cite{charest2015floer}]
    For $E>0$, an almost complex structure $J_D\in \mathcal J_\tau (X)$ is $E$-stabilized by $D$ if and only if whenever $u: (\Sigma, \partial \Sigma)\to (X, L)$ is a non-constant $J_D$ holomorphic curve (where $\Sigma\in\{D^2, S^2\})$ of energy less than $E$ we have
    \begin{itemize}
        \item (Non-constant spheres) There are no spheres $u: \Sigma\to X$, whose images are contained in $D$ and
        \item (Sufficient intersections) Each sphere (resp. disk) has at least three (resp. one) intersection points with the divisor $D$. 
    \end{itemize}
\end{definition}

\begin{lemma*}[Lemma 4.25 \cite{charest2015floer}]
    For $\theta$ sufficiently small, suppose that $D$ has sufficiently large degree for every $J_D$ which is $\theta$-close to $J$. For each energy $E>0$, the set of $E$-stabilized tame almost complex structures which are $D$-adapted and $\theta$-close to $J$ is open and dense (in the set of tame almost complex structures which are $D$-adapted and $\theta$-close to $J$).
\end{lemma*}

\subsubsection{Comparison to previous work}
We adapt the following lemmas to the setting of Lagrangian cobordisms:
\begin{itemize}
    \item \cite[Lemma 3.9]{charest2017floer} and \cite[Lemma 8.11]{cieliebak2007symplectic},which show that there exist weakly-stabilizing divisors for pseudoholomorphic disks and spheres; and 
    \item \cite[Lemma 4.25]{charest2015floer}, itself an extension of \cite[Proposition 8.14, Corollary 8.20]{cieliebak2007symplectic} which shows that for a fixed energy bound we can find a dense set of stabilized almost complex structures.
\end{itemize}
 Let $K: L^+\rightsquigarrow L^-$ be a Lagrangian cobordism. Denote by $V\subset \CC$ the open set with compact closure so that $K|_{\pi_\CC^{-1}(\CC\setminus V)}$ is contains only the ends of the Lagrangian cobordism. 

\begin{lemma}[Weakly-Stabilizing Divisors for Lagrangian cobordisms]
Let $K: L^+\rightsquigarrow L^-$ be a Lagrangian cobordism.
Pick an almost complex structure $J_X\times \jmath$ for $X\times \CC$.
There exists a constant $k_m$ so that for every $0<\theta<1$, there exists a $k_\theta>0$ such that for all $k>k_\theta$ we can find a $\theta$-approximately $J$-holomorphic divisor $D$ of degree $k_mk$ which is of sufficiently large degree for  $K$. 
Furthermore, this divisor can be chosen so that $D|_{\pi_\CC^{-1}(\CC\setminus V)}=D_X\times (\CC\setminus V)$ for some divisor $D_X\subset X$.
\label{lem:weaklystabilizedcobordism}
\end{lemma}
For this divisor, we can find $J_D$ which are weakly stabilized by $D$  and belongs to
\[\mathcal J_{\tau, V}(X\times \CC, \omega+\omega_\CC):=\left\{J\in J_{\tau}(X\times \CC, \omega)\;\middle|\; \parbox{5cm}{ $\exists J_z\in J_{\tau}(X, \omega)$ with   $J=J_z\times \jmath $ outside $\pi_\CC^{-1}(V)$}\right\},\]
the set of almost complex structures on $X\times \CC$ which are split and constant outside of a given compact subset $V$.
There exists a restriction map $\res:\mathcal J_{\tau, V}(X\times \CC, \omega + \omega_\CC)\to \mathcal J_{\tau}(X, \omega)$ by restricting to a fiber $X\times \{z\}$ with $z\not\in V$.

\begin{lemma}[Density of $E$-stabilized almost complex structures]
    Given $K: L^+\rightsquigarrow L^-$, choose $\theta, D$ as above. Then for $E>0$ there exists an open and dense subset of almost complex structures 
    \[\mathcal J^*_V(X\times \CC, D, J, \theta, E)\subset \mathcal J_{\tau, V}(X\times \CC, D, J, \theta)\]
    of almost complex structures which are $E$-stabilized by $D$.
    \label{lem:estabilizedcobordism}
\end{lemma}

\subsection{Background: divisors in the complement of a Lagrangian.}
We first recall the construction of weakly-stabilizing divisors for Lagrangian submanifolds $L\subset X$ where $X$ is compact. Portions of this algorithm will come into play when we construct weakly stabilizing divisors for Lagrangian cobordisms $K\subset X\times \CC$.

The primary method (employed in \cite{donaldson1996sympsubmanifold,auroux1997asymptotically}) for constructing symplectic divisors is to present them as the zero sets of some line bundle. For $J\in J_{\tau}(X, \omega)$, let $E_X\to X$ be a Hermitian line bundle with connection $\nabla^E$ whose curvature is $i\omega$. The zero sets of transverse sections of $E^k_X$ will be Poincar\'{e} dual to $k[\omega]$.
When we have a $J$-holomorphic section $s: X\to E$ which is transverse to the zero section, the zero set $s^{-1}(0)$ is a symplectic divisor. 
The condition of being $J$-holomorphic can be weakened substantially while still preserving the symplecticity of the divisor. A common way to weaken this construction is to consider a sequence of sections $s_k: X\to E^k$ which are \emph{asymptotically holomorphic} and uniformly transverse to 0; then for sufficiently large $k$, the sections $s_k^{-1}(0)$ will give symplectic divisors of degree $k$. 

\cite[Theorem 3]{auroux1997asymptotically} provides a tool for constructing asymptotically holomorphic sections.
\begin{definition}
    Let $V\subset X$ be a subset. 
    We say that sections $s_k: X\to E^k$ \emph{fall off from $V$} if there exists constants $C, \lambda>0$ so that $|s_k(x)|< C\exp(-\lambda d(y, V)^2)$, where $d$ is the metric induced by $\omega, J$.
    
    A section is \emph{concentrated at $V$} if there exists a constant $c$ so that $|s_k(x)|>c$ for all $x\in V$, and the sections $s_k$ fall off from $V$.
\end{definition}
Sections which are concentrated along a subset $V$ can be perturbed to make them transverse to the zero section without losing transversality over $V$. This is because if a section is concentrated at $V$, it is ``highly transverse'' to the zero section (in the sense that it is very non-zero!).
This idea can be extended to sections which intersect the zero section.
\begin{definition}[Definition 17 of \cite{donaldson1996sympsubmanifold}]
    We say that sections  $s_k: X\to E^k$ are $\eta$-transverse to $0$ over $V$ if for all $x\in V$, whenever $|s_k(x)|<\eta$, the covariant derivative $\nabla s_k(x): T_xX\to E_x^k$ is surjective with bound $|\nabla s_k(x)|>k^{1/2}\cdot \eta$.
    The metric we use here is the metric induced by $\omega$ and $J$. 

    Sections $s_{k}: X\to E^k$ have a \emph{neighborhood of $\eta$-transversality} over $V'$, if $s_k$ is $\eta$-transverse over all $x$ with $d(x, V')<4k^{-1/3}$. We say that a section is \emph{transversely extendible over $U$} if $U$ has compact closure, and $s_{k}$ has a neighborhood of $\eta$-transversality over $\partial U$. 

    Given $U$ a set, let $U^{=}:=\{x\in U \;|\; d(x, \partial U)>4k^{-1/3}$.
\end{definition}
From the definition, it is immediate that if $s_k: X\to E^k$ is concentrated at $V$, then it is transversely extendible over $X=U\setminus V$. The notion of transversely extendible is based on the following theorem:

\begin{theorem}[Adapted from Theorem 3\cite{auroux1997asymptotically}]
    Fix $\epsilon>0$. Given $s_k: X\to E$ asymptotically holomorphic sections and any open subset $U\subset X$ with compact closure, there exists asymptotically holomorphic sections $\tilde s_{k}: X\to E^k$ and $\tilde \eta>0$ so that 
    \begin{itemize}
        \item The section $\tilde s_{k}$ are asymptotically holomorphic ,
        \item The sections are $\tilde \eta$-transverse over $U^=$,
        \item $|\tilde s_{k}-s_k|<\epsilon$ and $|\nabla \tilde s_{k}- \nabla s_k|<k^{1/2}\epsilon$, and
        \item $s_k(x)=\tilde s_k(x)$ on the complement of $U$. 
    \end{itemize}
    \label{thm:donaldsondivisor}
\end{theorem}
An immediate corollary employed in \cite{auroux2001symplectic} is that whenever $X$ is compact, $s_{k, V}: X\to E^k$ asymptotically holomorphic sections concentrated on $V$, then there exists $\eta$-transverse holomorphic sections which are non-vanishing on $V$. A small modification allows us to replace ``concentrated'' with ``transversely extendible in the complement''.
\begin{corollary}
    Suppose that $s_{k, out}: X\to E^k$ is asymptotically holomorphic, transverse to zero over $X\setminus U$, and transversely extendible over $U$. Additionally assume that $s_{k, out}^{-1}(0)|_{X\setminus U}$ is a symplectic divisor over $X\setminus U$ for $k\gg 0$.
    There exists $\tilde \eta>0$ and asymptotically holomorphic sections $\tilde s_{k}: X\to \CC$, agreeing with $s_k$ over $X\setminus U$ and  $\tilde{\tilde {\eta}}$ transverse to zero over $U$. In particular, $\tilde s_k^{-1}(0)$ is a symplectic divisor for $k \gg 0$.
    \label{cor:extensionofsections}
\end{corollary}
\begin{proof}
    By definition of transverse extendibility, $s_{k,out}$ has a neighborhood of $\eta$-transversality along $\partial(U)$. This implies that  $s_{k,out}$ is $\eta$-transverse in $U\setminus U^=$.

    Pick $\epsilon$ small enough so that $\epsilon<\eta/2$.  By \cref{thm:donaldsondivisor}, we can construct sections $\tilde s_k$ with $\epsilon$-specified which are $\tilde \eta$-transverse over all points in $U^=\subset U$. As $s_{k,out}$ is $\eta$-transverse in $U\setminus U^=$, and $|\tilde s_{k}-s_k|<\epsilon$ and $|\nabla \tilde s_{k}- \nabla s_k|<k^{1/2}\epsilon$, we obtain that  $\tilde s_k$ is at least $\eta-\epsilon$ transverse over $U\setminus U^=$. Therefore, $\tilde s_k$ are $\tilde{\tilde{\eta}}=\min(\tilde \eta, \eta/2)$ transverse over $U$. 

    It follows that for $k\gg 0$, $\tilde s^{-1}_k(0)|_{U}$ is a symplectic hypersurface. Since $\tilde s^{-1}(0)|_{X\setminus U}=s^{-1}(0)|_{X\setminus U}$ (which is symplectic for $k\gg 0$ by assumption) we obtain that for $k\gg 0$ $\tilde s^{-1}(0)$ is a symplectic hypersurface.
\end{proof}

We now return to the problem of finding $D$ disjoint from $L$: this amounts to constructing asymptotically $J$-holomorphic and $\eta$-transverse sections $s_k$ such that $s_k(x)\neq 0$ for all $x\in L$. 

\cite[Lemma 3.9]{charest2015floer} observes that these sections must satisfy an additional requirement if we would like $D_k=s_k^{-1}(0)$ to be weakly stabilizing. Suppose that $D_k$ is given as the zero set of sections $s_k: X\to E^k$, which are non-vanishing on $L$.
The sections determine trivializations $\tau_k$ of $E^k\to L$; the connection 1-form written in this trivialization determines a class $\alpha_{\tau_k}\in H^1(L, \RR)$. We can compute the intersection number of $D_k$ with a class of disk $u:(D^2, \partial D^2)\to (X, L)$ by:
\begin{equation}
    [u]\cdot [D_k]= k\int_{D^2} u^*\omega - \int_{[\partial D^2]} u^* \alpha_{\tau_k}.\label{eq:boundedconnectionforms}
\end{equation}
The only way we can hope for this to be positive is if the connection 1-forms $\alpha_{\tau_k}$ are bounded; we call such a selection of trivializations \emph{bounded}. \cite[Lemma 3.9]{charest2015floer} proves that for fixed $L$, boundedness of $\alpha_{\tau_k}$ is a sufficient condition for the divisors $D_k$ to be of sufficiently large degree for large enough $k$.  

The bounded curvature form requirement following \cref{eq:boundedconnectionforms} can be accommodated in the construction of \cite{auroux2001symplectic}, which we now recall. The construction of $s_{k, L}$ starts by finding a constant $C_L$ and picking trivializations $\tau_k: L\to E^k$ with 
\begin{align*}
    |\tau_k(x)|=1 && |\nabla \tau_k(x)|_g< C_L
\end{align*}
These trivializations can additionally be chosen so that their connection 1-forms $\alpha_{\tau_k}$ are bounded.
Associated to $p\in L$ a point, \cite{auroux2001symplectic} constructs asymptotically holomorphic and uniformly bounded sections 
    \[s_{k, p, L}(x):=\frac{\tau_k(p)}{|s_{k, p}(p)|}s_{k, p}(x)\]
where $s_{k, p}(x)$ is asymptotically holomorphic section concentrated at a point $p$.

Consider a \emph{finite} set of points $P(k)\subset K$ with the property that the radius $\frac{1}{\sqrt{k}}$-balls centered at $x\in P(k)$ cover $K\cap \pi_\CC^{-1}(V)$, and the radius $\frac{1}{3\sqrt{k}}$-balls around the $x\in P(k)$ are disjoint from another and $K\cap \pi_\CC^{-1}(\CC\setminus V)$. By construction , the arguments of $s_{k, p, L}(x)$ do not differ by much (\cref{eq:arguments}). Therefore
\[s_{k, L}:=\sum_{p\in P(k)}s_{k, p, L}(x)\]
is an asymptotically holomorphic section concentrated along $L$. To obtain a section which is $\eta$-transverse everywhere (giving us symplectic divisors) we apply \cref{cor:extensionofsections} to perturb by a small amount in the complement of $L$.

\subsection{Construction of Weakly Stabilizing Divisors for Lagrangian Cobordisms}
We adapt the constructions above for Lagrangian cobordisms $K\subset X\times \CC$. Again, we assume that $X$ is compact,  $H_1(K)$ is torsion free, and that $[\omega]\in H^2(X, \ZZ)$.
\subsubsection{Toy case: cobordism with empty ends}
We start with a toy case: let $K\subset X\times \CC$ be a compact Lagrangian submanifold, and suppose that $X$ is compact. We construct a weakly stabilizing divisor for $K$.

Take $V:=\{z\;|\; |z|< R\}\subset \CC$ a compact subset so that the ends of $K$ are disjoint from $V$. Additionally, choose $J=J_X\times \jmath$ a split almost complex structure.
We now sketch how to construct a symplectic divisor in the complement of $K$. 
The approach is similar to the one before: consider the bundle  $E=(E_X \boxtimes E_\CC)$, with connection $\nabla^{E_X}\boxtimes \nabla^{E_\CC}$ whose curvature form is $\omega_X+\omega_\CC$. 
We take trivializations $\tau_k: K\to E^k$ which satisfy the connection 1-form bound following \cref{eq:boundedconnectionforms}. By application of \cite{auroux2001symplectic} we can construct $\eta$-transverse asymptotically $J$-holomorphic section $s_{k, K}: X\times \CC\to E$ which is concentrated along $K$. 

We cannot from here immediately proceed using the construction of \cite{auroux2001symplectic}, as we need to perturb over a non-compact set to achieve transversality. However, we can find a perturbation $s_{k,\text{out}}: X\times \CC\to E^k$ which is transverse (but not $\eta$-transverse!) and asymptotically holomorphic outside of the region $V$. Consider the section $s_{k, R}:\CC\to E_\CC$ modeled after the section which is asymptotically holomorphic and concentrated on $S^1_R$ from \cite[Pg 746]{auroux2001symplectic}; this can be explicitly written in coordinates as 
\[ s_{k,R}(z)=\rho_k \exp\left(\frac{ k (R-|z|)^2}{2}\right).\]
where $\rho_k: \CC\to \RR$ is a bump function equal to 1 outside a small neighborhood of the origin. 
The $s_{k, R}(z)$ define asymptotically holomorphic sections which are non-vanishing outside of $|z|>R$. Furthermore, $s_{k, R}(z)$ is concentrated along the boundary of $S^1_R$.

Now pick $s_{k, X}: X\to E$ a section which is asymptotically holomorphic and $\eta$-transverse. 
This gives us the section  $s_{k,\text{out}}:=s_{k,X}\boxtimes s_{k, R}: X\times \CC\to E^k$. 
For $(x, z)\in X\times \CC$ with $||z|-R|<k^{-1/3}$, we have 
\begin{align*}
    |s_{k, \text{out}}|+|\nabla^E s_{k, \text{out}}| =& |s_{k,X}|\cdot |s_{k, R}|+ |\nabla^{E_X}s_{k,X}|\cdot |s_{k, R}|+ |s_{k,X}|\cdot |\nabla^{E_\CC}s_{k, R}|\\
    >& \left(|s_{k, X}|+|\nabla^{E_X} s_{k, X}|\right)|s_{k, R}|>\frac{\eta}{3}.
\end{align*}
So, $s_{k, \text{out}}$ is $\frac{\eta}{3}$-transverse in a neighborhood of $|z|=R$.

This means  $s_{k, \text{out}}+ s_{k, K}$ is $\frac{\eta}{3}$ transverse in a neighborhood of the boundary of $\partial(X\setminus(K\cup \pi_\CC^{-1}(\CC\setminus V))$ --- which means that it is transversely extendible over the complement of $K\cup \pi_\CC^{-1}(\CC\setminus V)$. Since the zero locus of $s_{k,\text{out}}+ s_{k, K}$ restricted to $K\cup \pi_\CC^{-1}(\CC\setminus V)$ is symplectic for $k\gg 0$, by \cref{cor:extensionofsections} we obtain a section $\tilde s_k: X\times \CC\to E^k$ whose zero sets $s_{k}^{-1}(0)$ are symplectic for $k \gg 0$ and agree with the zero set of $s_{k,\text{out}}+ s_{k, K}$ over $K\cup \pi_\CC^{-1}(\CC\setminus V)$. That portion is 
\[s_{k}^{-1}(0)|_{\pi_\CC^{-1}(\CC\setminus V)}= s_{k,\text{out}}^{-1}(0)|_{\pi_\CC^{-1}(\CC\setminus V)}= s_{k, X}^{-1}(0)\times (\CC\setminus V).\]

\subsubsection{Full case: Construction of Donaldson divisors for Lagrangian Cobordisms}
\label{lem:hypersurfaceexistence}
    We first note that there is a flat trivialization of $\tau_{k, \RR}: \RR\to E^k_\CC$ for the Lagrangian $\RR\subset \CC$. 
    Pick an almost complex structure $J_X\times \jmath$ for $X\times \CC$.
    As in the construction from \cite{auroux2001symplectic}, we take trivializations $\tau_{k,K}: K\to  E^k$; we ask that they satisfy the following additional properties:
    \begin{itemize}
        \item  the sections satisfy the bounds from \cite[Lemma 2]{auroux2001symplectic}.
        \item  There exist choices of trivializations $\tau^\pm_k: L^\pm\to E_X^k$ so that $\tau_k$ splits as $\tau^\pm_k\times \tau_\RR$ when restricted to the ends of the cobordism $L^+\times [t^+, \infty), L^-\times (-\infty, t^-]$.
        \item  Additionally, the trivializations $\tau_+, \tau_-$ chosen above satisfy the property that at each $x\in L^+\cap L^-$, we have $\tau_+(x)=\tau_-(x)$. 
    \end{itemize}
    The last condition can be achieved with the argument of \cite[Lemma 3.11 (b)]{charest2015floer}.

    The divisor we construct will be the zero set of a transverse $\theta$-approximately holomorphic section $s_k: X\times \CC\to E^k$ which is non-vanishing on $K$.
    The restriction of $s_k|_K: K\to E^k$ provides a trivialization homotopic the sections $\tau_{k, K}$. Provided that these have bounded connection forms (following the discussion of  \cref{eq:boundedconnectionforms}) the resulting divisors we construct will become weakly stabilizing when $k\gg 0$ \cite[Lemma 3.9]{charest2017floer}. 

    Using the construction of \cite{auroux2001symplectic} there exist constants $C, \eta>0$ and asymptotically $J_X$-holomorphic $\eta$-transverse sections $s_{k, X}: X\to E^k$ which have the property that $s_{k, X}(x)>C$ for all $x\in L^+\cup L^-$; the section arises as a perturbation of a section concentrated on $L^+\cup L^-$. For any given $\theta>0$, we can choose the perturbation sufficiently small so that for large $k$ and any $x\in L^+\cup L^-$ the argument of the section $s_{k, X}$ is $\theta$ approximately for the argument of $\tau_k^\pm$ (\cite[p. 746]{auroux2001symplectic}) i.e.
    \begin{equation}\sup_{x\in L^\pm}\left|\arg\left(\frac{s_{k, X}(x)}{\tau_k^\pm(x)}\right)\right|<\theta
        \label{eq:arguments}
    \end{equation}
    
    Take $\theta<\frac{\pi}{8}$ in the above construction, and define the section $s_{k, X\times \CC, \text{out}}:=s_{k, X}\boxtimes s_{k,R}: X\times \CC\to E^k$. The sections asymptotically $J_X\times \jmath$ holomorphic, transverse to the zero section outside a non-compact set, and non-vanishing over the ends of the Lagrangian cobordism.
    Additionally, $s_{k, X\times \CC, \text{out}}$ has the property that for sufficiently large $k$, the argument of this section approximates (up to $\theta$) the argument of $\tau_{k, K}$ over $X\times (\CC\setminus V)$.

    It remains to modify this section so that it is non-vanishing over the entirety of $K$ and add further perturbations to obtain $\tilde \eta$-transversality over $\pi_\CC^{-1}(V)$. Consider locally concentrated perturbations whose arguments are approximately determined by $\tau_k$ 
    \[s_{k, p, K}(x,z):=\frac{\tau_k(p)}{|s_{k, p}(p)|}s_{k, p}(x,z).\]
    For each $k$, consider a subset of points $P(k)\subset K$ whose radius $\frac{1}{\sqrt{k}}$ balls cover $K\cap \pi^{-1}_\CC(V)$ and are distance at least $\frac{1}{3\sqrt{k}}$ from each other. \cite{auroux2001symplectic} shows that 
    \[s_{k, K , \text{int}}(x):= \frac{\eta}{6}\sum_{p\in P(k)} s_{k, p, K}(x)\]
    is concentrated at $K\setminus \pi^{-1}(U)$. Furthermore, for any $\theta$ and $k$ sufficiently large,  $\sup_{x \in K\setminus \pi_\CC^{-1}(V)}\left|\arg\left(\frac{s_{k, K, \text{int}}(x)}{\tau_k(x)}\right)\right|<\theta$. 
    Therefore, for sufficiently large $k$, the sections $s_{k, K, \text{int}}(x)$ and $s_{k,X\times \CC, \text{out} }(x)$ have  arguments agreeing up to error of $2\theta = \pi/4$ over all $x\in K\cap \pi_\CC^{-1}(V)$; therefore the sum $|s_{k, K, \text{int}}+ s_{k, X\times \CC, \text{out}}|$ is at least $\eta/6$ over $K\cap \pi^{-1}(V)$. 
    
    In summary: we have constructed a section $s_k:=s_{k, K, \text{int}}+ s_{k, X\times \CC, \text{out}}$ which is $\eta$-transverse over a neighborhood of $\partial(X\times V\setminus K)$, and $ s_k^{-1}(0)$ is a symplectic divisor over a neighborhood of $K\cup \pi^{-1}_\CC(\CC\setminus V)$ for $k\gg 0$. 
    From here we apply \cref{cor:extensionofsections} to construct asymptotically holomorphic sections  $\tilde s_k: X\times \CC\to E^k$ whose zero set is symplectic for $k \gg 0$; as in the toy case, 
    \[\tilde s_k^{-1}(0)|_{X\times (\CC\setminus V)}= s_k^{-1}(0)|_{X\times (\CC\setminus V)}=s_{k, X}^{-1}(0)\times (\CC\setminus V).\]

    It follows from \cite[Lemma 3.9]{charest2017floer} and \cite[Lemma 8.11]{cieliebak2007symplectic}that for $k$ sufficiently large, $\tilde s_k^{-1}(0)$ is a divisor of sufficiently large degree for $K$. This completes the proof of \cref{lem:weaklystabilizedcobordism}.
\subsubsection{Weak stability}

We now show that for $k$ sufficiently large, the Donaldson divisors $D$ constructed as the zero sets of $\tilde s_k: X\times \CC \to E^k$ described in \cref{lem:hypersurfaceexistence} are weakly stabilizing. The argument follows \cite[Section 4.5]{charest2017floer}. 

Given $J\in\mathcal J_{\tau, V}(X\times \CC, \omega+\omega_\CC)$ we use \cref{lem:hypersurfaceexistence} to construct  $D\subset X\times \CC$ which splits in the complement of $\pi^{-1}_\CC(V)$, is $\theta$-approximately holomorphic and has sufficiently large degree $k$. This restricts to fibers $X\times \{z\}, z\not\in K$ to give us divisors $D_X$ which are $\theta$-approximately holomorphic with respect to the almost complex structure $J_X$.

Denote by 
\begin{align*}
    \mathcal J_{\tau}(X, D_X, J_X, \theta):=&\{ J_{X,D_X}\in\mathcal J_{\tau}(X, \omega)\;|\; \|J_X-J_{X, D_X}\|<\theta, J_{X, D_X}(TD_X)=TD_X.\}\\
    \mathcal J_{\tau, V}(X\times \CC, D, J, \theta):=&\{ J_D\in\mathcal J_{\tau, V}(X\times \CC, \omega+\omega_\CC)\;|\; \|J-J_D\|<\theta, J_D(TD)=TD.\}
\end{align*}
to be the almost complex structures which are $\theta$-close and adapted to the divisor $D$. There is a restriction map  $\res: \mathcal J_{\tau, V}(X\times \CC, D, J, \theta)\to  \mathcal J_{\tau}(X, D_X, J_X, \theta)$. The argument of \cite[Lemma 3.9]{charest2017floer} carries over here, so for $\theta>0$ there exists $d_0(\theta)$ so that if $D$ has degree $d_0(\theta)$ then $D$ is sufficiently large (\cite[Definition 4.24]{charest2017floer}) for all almost complex structures which are $\theta$-close to $J$.

\begin{proof}[Proof of \cref{lem:estabilizedcobordism}]
    As in  \cite[Lemma 4.9]{charest2017floer}/\cite[Lemma 4.25]{charest2015floer}, the proof is similar to \cite[Proposition 8.14]{cieliebak2007symplectic}. The argument in \cite[Lemma 4.9]{charest2017floer} shows first shows that  $\mathcal J^*_V(X\times \CC, D, J, \theta, E)$ is open.

    Consider a convergent sequence $J^i$ in the complement of $\mathcal J_{\tau, V}(X\times \CC, D, J, \theta,E)$ where there are $J^i$-holomorphic spheres $u^i$ which are contained in $D$. Then at least one of the two hold:
    \begin{itemize}
        \item There is an infinite subsequence of $J^i$-holomorphic $u^i$ with $\Im(\pi_\CC\circ u^i)\subset V$. Since the images of these spheres are contained in a compact set, we may apply Gromov compactness to show that a limiting subsequence converges to $u$ with $\Im(u)\subset D$. Therefore, the limit of the $J^i$  is in the complement of $\mathcal J^*_V(X\times \CC, D, J, \theta, E)$
        \item There is an infinite subsequence of $J^i$-holomorphic $u^i$ with $\Im(\pi_\CC\circ u^i)\not\subset V$. By open mapping principle, the $\pi_\CC\circ u^i$ are constant, and so we obtain a subsequence of $J^i_X$-holomorphic maps with image $\pi_X\circ u^i$ in $D_X$. The same Gromov compactness argument shows that the limit of the $J^i_X$ is in the complement of $\mathcal J^*(X, D_X, J_X, \theta, E)$. Therefore the limit of the $J^i$ is in the complement of $\mathcal J^*_V(X\times \CC, D, J, \theta, E)$.
    \end{itemize}
    The next step is to show that $\mathcal J^*_V(X\times \CC, D, J, \theta, E)$ is dense. This is done by showing that $\mathcal J^*_V(X\times \CC, D, J, \theta, E)$
    contains $\mathcal J^{reg}_V(X\times \CC, J, \theta, E)$, the set of almost complex structures such that all simple holomorphic curves up to energy $E$ are regular. The argument that these regularizing complex structures are $E$-stabilized is identical to the proof given in \cite[Lemma 4.9]{charest2017floer} which follows the ideas of \cite[Proposition 8.11]{cieliebak2007symplectic}.
    The portion which differs between our setting and the one considered in \cite{charest2017floer} is to prove that the regularizing almost complex structures $\mathcal J^{reg}_V(X\times \CC, J, \theta, E)$ are comeager in $\mathcal \mathcal J_{\tau, V}(X\times \CC, D, J, \theta)$. 

   The standard proof, which we use here, is to show that the universal Cauchy-Riemann operator 
    \begin{align*}
        \bar \partial ^{X\times \CC}:\mathcal B(K)\times \mathcal \mathcal J_{\tau, V}(X\times \CC, D, J, \theta) \to \mathcal E_{X\times \CC}\\
        (u, J)\mapsto \bar \partial^{X\times \CC}_J(u)
    \end{align*}
    is transverse to $0$. Here, $\mathcal B(K)$ is the Banach manifold of disks with boundary on $K$, and $\mathcal E_{X\times \CC}$ is the Banach bundle whose fiber at $u\in \mathcal B$ is sections of $\Omega^{0, 1}(u^*T(X\times \CC))$.
    Given a pair $(u, J)$ we break into two cases: if $\Im(u)$ is disjoint from $\pi^{-1}_\CC(V)$, or if $u\subset \pi^{-1}_\CC(V)$. 
    \begin{enumerate}
        \item In the former case ($\Im(\pi\circ u)\cap V=\emptyset$) the open mapping principle implies that $\bar \partial^{X\times \CC}_J(u)=0$ only when $\pi_\CC(u)$ is constant. From this setting, we can apply \cite[Lemma 4.25]{charest2015floer} to show that $\bar \partial^X: \mathcal B(L^\pm)\times \mathcal J_{\tau}(X, D_X, J_X, \theta)\to \mathcal E_X$ is a submersion . It follows that the map $\bar \partial^{X\times \CC}$ is a submersion as well.
        \item In the latter case ($\Im(\pi\circ u)\cap V\neq \emptyset$),  we fall into the setting described by \cite[Definition 5.5]{cieliebak2007symplectic}. Then \cite[Lemma 5.6]{cieliebak2007symplectic} states the universal Cauchy-Riemann operator is transverse to 0 when restricted to variations of $J$ fixed over the complement of $U=\pi^{-1}(V)$ \footnote{In \cite{cieliebak2007symplectic} there is different notation: their set $V$ is our set $U$.} as long as the image of $u$ is not contained within $U$. 
    \end{enumerate}
    Thus, the universal Cauchy-Riemann operator is transverse to 0. By Sard-Smale the set of regular $J$, $\mathcal J^{reg}_V(X\times \CC, J, \theta, E)$ is comeager.
\end{proof}
\subsection{Compactness }
The above constructions allow us to define regular moduli spaces of $\mathcal P$-pseudoholomorphic treed disks with boundary on a Lagrangian cobordism $K$. Let $X$ be a rational symplectic manifold, $K\subset X\times \CC$ a Lagrangian cobordism, and $D\subset X\times \CC$ chosen so that \cref{lem:estabilizedcobordism} holds. 
We say that perturbation datum is \emph{stabilizing and cobordism admissible} if it is chosen from the neighborhoods $\mathcal J^*_V(X\times \CC, D, J, \theta, E)$. The perturbation system is called \emph{admissible} if it is coherent, regular, stabilizing, and cobordism admissible. 
 We now need the cobordism analog of \cite[Theorem 4.27]{charest2017floer}.
\begin{prop}
    Let $\mathcal P_{\mathbb I}$ be an admissible perturbation system.
    For any $\underline \Gamma$ of $\ind(\underline \Gamma)\leq 1$, the moduli space $\mathcal M_{\mathcal P_{\mathbb I}}(X\times \CC,K,D, \underline \Gamma)$ is compact, whose boundary components are given by Morse flow-line breaking in $K$.
    \label{prop:compactness}
\end{prop}
\begin{proof}
    The only modification needed from \cite[Theorem 4.27]{charest2017floer} is to address the use of Gromov compactness in the setting of $X\times \CC$. 
    We give a brief recap of the argument used by \cite{biran2008lagrangian}. Consider a domain-dependent pseudoholomorphic map $u: \Sigma\to X$. Since we choose domain-dependent almost complex structures from $\mathcal J_{\tau, V}(X\times \CC, \omega+\omega_\CC)$ the map $\pi_\CC \circ u$ is holomorphic for points whose image lies outside of $V$. As a result, we may apply the maximum principle to show that holomorphic disks with boundary on a Lagrangian cobordism $K$  either
    \begin{itemize}
        \item have image contained in $X\times V$ or;
        \item live in a fiber of the projection so that $\pi_\CC(u)$ is constant.
    \end{itemize}
    By choosing a Morse function whose gradient flow points outwards along the ends of the Lagrangian cobordism, we can show that the image of a holomorphic treed disk is contained within $X \times V$  (see \cref{lemma:openmapping} for a full argument). We can therefore apply Gromov compactness.
\end{proof}

By \cite[Theorem 4.19]{charest2015floer} there exists admissible perturbation datum. 

\begin{corollary}
    Let $X$ be a compact rational symplectic manifold. Let $K: L^+\rightsquigarrow L^-$ be a spin and graded Lagrangian cobordism, $h: K\to \RR$ an admissible Morse function. There exists a comeager set of perturbation datum so that $\CF(K, h, \mathcal P)$ defines a filtered $A_\infty$ algebra. 
    \label{cor:pearlymodelexistence}
\end{corollary}

\subsection{Pearly model for Lagrangian Cobordisms}

The main result of this section is the matching of moduli spaces of pseudoholomorphic disks for $L^+$ with those of $K$. 
Given a pre-broken type $\underline \Gamma_P$ for $L^+\subset X$, denote by $(i_*^+)\underline \Gamma_P$ the type for $K\subset X$ whose underlying tree has combinatorial type $\Gamma$ and whose pre-breakings and labels are determined by the identification of $i_*^+\Crit(h^+)\subset \Crit(h)$.
Denote by $\mathbb I^+_+:=i^+_{*}\mathbb T_{L^+}$ the set of pre-broken combinatorial types of $(K, h)$ whose labels come from pushforward of labels on $(L, h^+)$. 
Let $\mathbb I_+$ denote the trees which have outgoing edge labeled by a critical point in $i_*^+\Crit(h^+)\subset \Crit(h)$.
\begin{theorem}[Compatibility of Pearly Model of Cobordism]
    Let $K:L^-\rightsquigarrow L^+$ be a Lagrangian cobordism, and $h: K\to \RR$ be  a compatible Morse function. 
    There exists admissible perturbations systems 
    $\mathcal P$ for $K$ and 
    $(i^+)^*\mathcal P$ for $L^+$   
    so that for any labeled combinatorial type $\underline \Gamma_P\in\mathbb I_+$.
    \begin{align*}
         \mathcal M_{\mathcal P}(X\times \CC,K,D, \underline \Gamma_P)=\left\{ \begin{array}{cc} \mathcal M_{(i^+)^*\mathcal P} (X,L^+,D_X, \underline \Gamma'_P) & \text{ if $\mathbb I^+_+\ni \underline \Gamma_P=i^+_*\underline \Gamma'_P$ }\\ 
            \emptyset & \text{if $\underline \Gamma_P\in \mathbb I_+\setminus \mathbb I^+_+$.}
         \end{array}
         \right.
    \end{align*}
    \label{assum:pearlycompatibility}
\end{theorem}
\begin{proof}
    The admissibility of the pullback perturbation $(i^+)^*\mathcal P$ is proven in \cref{cor:pullbackisadmissible}. The characterization of the moduli spaces are \cref{eq:matchingifmatching,clm:emptyotherwise}.
\end{proof}
An algebraic corollary of this statement is the pearly model equivalent of \cref{prop:morseprojection}

\begin{corollary}
    Let $K: L^+\rightsquigarrow L^-$ be a Lagrangian cobordism. Then the projections 
   \[
       \setlength\mathsurround{0pt}\begin{tikzcd}
           \; & \CF(K, h) \arrow{dl}{\beta^{-}} \arrow{dr}{\beta^+} \\
           \CF(L^-, h^-)& & \CF(L^+, h^+)
       \end{tikzcd}\setlength\mathsurround{.8pt}
   \]
   are filtered $A_\infty$-homomorphisms.
   \label{cor:projectionsareainfinity}
\end{corollary}
\begin{proof}
   The conditions on the moduli spaces imposed by \cref{assum:pearlycompatibility} show whenever $\{x_i\}$ is a sequence of Floer cochains with at least one $x_i$ corresponding to a critical point in $\Crit(h)\setminus i_*^+\Crit(h^+)$, that 
   \[m^k(x_1\tensor \cdots\tensor x_k)\in \Lambda\langle \Crit(h)\setminus i_*^+\Crit(h^+)\rangle \subset \CF(K, h).\]
   This proves that $\Lambda\langle \Crit(h)\setminus i_*^+\Crit(h^+)\rangle $ is an $A_\infty$ ideal, and that the projection $\CF(K, h)/\langle \Crit(h)\setminus i_*^+\Crit(h^+)\rangle$ is an $A_\infty$ homomorphism. 
   It additionally follows from \cref{assum:pearlycompatibility} that the $A_\infty$ product on 
   $\CF(K, h)/\langle \Crit(h)\setminus i_*^+\Crit(h^+)\rangle$ matches the $A_\infty$ structure on $\CF(L^+, h^+)$.

   The same argument holds on the negative end as well.
\end{proof}

\subsubsection{Proof of Compatibility of Moduli Spaces}

\begin{lemma}
    Let  $\mathcal P$ be an admissible perturbation datum for $K$.
    Consider a combinatorial type $\underline \Gamma\in \mathbb I^+_+$ and  $u\in \mathcal M_{ \mathcal P_{\mathbb I_+^+}}(X\times \CC, K, D, \underline \Gamma)$.
    We claim that $\pi_\CC(u)=t^+$.
    \label{lemma:openmapping}
\end{lemma}
\begin{proof} Let $\gamma_e: (s^-_e, s^+_e)\to K$ be the Morse flow lines of $u$, and let $u_v: (D^2_v, \partial D^2_v)\to (X\times \CC, K)$ be the $ \mathcal P_{}$-regular holomorphic disks of $u$, where $v, e$ denote vertices and edges of $\underline \Gamma$ respectively. We make two observations:
\begin{itemize}
\item If $\pi_\CC\circ \gamma(s_+)=t^+$, then $\pi_\CC\circ \gamma(s^-_e)=t^+$ because $\grad h^+$ points away from $t^+$.
\item As the composition  $u\circ \pi_\CC:D^2\to  X\times \CC\to \CC$ is holomorphic outside of $V$ , if there exists a point $z_0\in D^2$ so that $\pi_\CC\circ u_v(z_0)=t^+$, then $\pi_\CC\circ u_v(z)=t^+$ by the maximum principle.
\end{itemize}
At the root of the tree, we have $\pi_\CC(\gamma_{e_0}(s^+_e))=\pi_\CC(x_0)=t^+$. Recursively working upwards through the tree from the root and applying the observations shows that $\pi_\CC(u)=t^+$. 
\end{proof}    
From this, we can draw two conclusions:

Firstly, given $\underline \Gamma_P$ for $L^+\subset X$, and admissible perturbation $\mathcal P(i^+_*\underline \Gamma_P)$ for the corresponding type on $K$, we obtain a domain dependent perturbation of almost complex structure $(i^+)^*\mathcal P(\underline \Gamma_P)$ by using the restriction map $\res: \mathcal J_{\tau, V}(X\times \CC, D, J, \theta)\to  \mathcal J_{\tau}(X, D_X, J_X, \theta)$. 
Every $ \mathcal P$- pseudoholomorphic disk $u$ of type $i^+_*\underline \Gamma_P$ lies completely inside $X\times \{t^+\}$, and (by compatibility of our two perturbation systems) gives rise to a $ (i^+)^*\mathcal P_(\underline \Gamma_P)$-pseudoholomorphic disk $\pi_X(u)$ with boundary on $L^+\subset X$. We can therefore state that (as a set) 
\[\mathcal M_{ \mathcal P}( X\times \CC,K, D, i^+_*\underline \Gamma_P))\subset \mathcal M_{ (i^+)^* \mathcal P}(X, L, D_X, \underline \Gamma_P).\] 
Because the gradient flow of an admissible Morse function points outwards at $t^+$, and  $\mathcal P$ is regular and split in a neighborhood of $X\times \{t^+\}$, $\pi_X(u)$ is a regular disk for $(i^+)^*\mathcal P$.

Similarly, the lift of any  $u\in \mathcal M_{(i^+)^* \mathcal P }(X, L, D_X, \underline \Gamma_P) $ to $u\times\{t^+\}$ is a solution for the $\mathcal P$-perturbed $\bar \partial$ equation.
This shows that 
\begin{equation}
    \mathcal M_{ \mathcal P}( X\times \CC,K, D, i^+_*\underline \Gamma_P)= \mathcal M_{ (i^+)^* \mathcal P}(X, L, D_X, \underline \Gamma_P).
    \label{eq:matchingifmatching}
\end{equation}
Since $\mathcal P$ is regular, $u\times \{t^+\}$ this is a regularly cut-out treed disk, so $\pi_X(u\times \{t^+\})=u$ is regular. We conclude that $(i^+)^*\mathcal P$ is regular perturbation system.

\begin{corollary}
    Let $\mathcal P_{\mathbb T_K}$ be an admissible perturbation system for $K$. Then $(i^+)^*\mathcal P_{\mathbb T_L}$ is admissible perturbation system for $L^+$. 
    \label{cor:pullbackisadmissible}
\end{corollary}

Secondly: let $\mathbb I_+\setminus \mathbb I^+_+$ denote the set of labeled trees with outgoing label in $i_*^+\Crit(h^+)$, but at least one incoming label in $\Crit(h)\setminus i_*^+\Crit(h^+)$.
\begin{claim}
    If $\underline \Gamma_P\in \mathbb I_+\setminus \mathbb I^+_+$, then $\mathcal M(X\times \CC, K, D, h, \underline \Gamma_P)=\emptyset$. 
    \label{clm:emptyotherwise}
\end{claim}
\begin{proof}[Proof of Claim]
    For all $\underline \Gamma_P\in \mathbb I_+\setminus \mathbb I^+_+$, 
     $\underline \Gamma_P$ has an input label which does not belong to $i_*^+\Crit(h^+)$. Suppose for contradiction that we have a pseudoholomorphic treed disk $u$ of type $\underline \Gamma_P$.  Since the output label of $\underline \Gamma_P$ does still belong to $i_*^+\Crit(h^+)$ we may apply \cref{lemma:openmapping} and conclude $\pi_\CC(u)=t^+$. 
    However, this implies that all the labels of $\underline \Gamma_P$ belong to $i_*^+\Crit(h^+)$, a contradiction. Therefore $\mathcal M_{\mathcal P}(K, X\times \CC, D, \underline \Gamma_P)=\emptyset$.
\end{proof}

\printbibliography

\Addresses
\end{document}